\newenvironment{customthm}[1]
  {\innercustomthm}
  {\endinnercustomthm}
\newenvironment{customlemma}[1]
  {\innercustomlemma}
  {\endinnercustomlemma}
\newcommand{\wh}{\widehat}
\newcommand{\Norm}[1]{ \left\|  #1 \right\| }
\newcommand{\Be}{\begin{equation}}
\newcommand{\Ee}{\end{equation}}
\newcommand{\Bm}{\begin{multline}}
\newcommand{\Em}{\end{multline}}
\newcommand{\Bea}{\begin{eqnarray}}
\newcommand{\Eea}{\end{eqnarray}}
\newcommand{\Beas}{\begin{eqnarray*}}
\newcommand{\Eeas}{\end{eqnarray*}}
\newcommand{\Benu}{\begin{enumerate}}
\newcommand{\Eenu}{\end{enumerate}}
\newcommand{\Bi}{\begin{itemize}}
\newcommand{\Ei}{\end{itemize}}
\def\intslash{\fint}
\def\intslash{\rlap{\kern  .32em $\mspace {.5mu}\backslash$ }\int}
\def\qsl{{\rlap{\kern  .32em $\mspace {.5mu}\backslash$ }\int_{Q_x}}}
\def\Norm#1{{ \left\|  #1 \right\| }}
\def\emph#1{{\it #1 }}
\def\G{{\hbox{\bf G}}}
\font \roman = cmr10 at 10 true pt
\def\y{{\hbox{\roman y}}}
\def\be#1{\begin{equation}\label{ #1}}
\def\endeq{\end{equation}}
\def\endal{\end{align}}
\def\bas{\begin{align*}}
\def\eas{\end{align*}}
\def\bi{\begin{itemize}}
\def\ei{\end{itemize}}
\def\emph#1{{\it #1}}
\def\textbf#1{{\bf #1}}
\theoremstyle{plain}
   \newtheorem{theorem}{Theorem}[section]
   \newtheorem{conjecture}[theorem]{Conjecture}
   \newtheorem{proposition}[theorem]{Proposition}
   \newtheorem{lemma}[theorem]{Lemma}
   \newtheorem{corollary}[theorem]{Corollary}
   \newtheorem{theorem*}{Theorem}
\theoremstyle{remark}
   \newtheorem{remark}[theorem]{Remark}
\theoremstyle{definition}
\numberwithin{equation}{section}
\begin{document}
\title[Radial Fourier multipliers in $\mathbb{R}^3$ and $\mathbb{R}^4$]{Radial Fourier multipliers in $\mathbb{R}^3$ and $\mathbb{R}^4$}
\author[L. Cladek]{Laura Cladek}
\address{L. Cladek, Department of Mathematics\\ University of Wisconsin-Madison\\480 Lincoln Drive, Madison, WI 53706, USA}

\email{cladek@math.wisc.edu}

\subjclass[2010]{42B15}

\thanks{The author would like to thank Andreas Seeger for helpful discussions and comments which improved the presentation of this paper. Part of this work was completed while the author was visiting the Institute for Computational and Experimental Research in Mathematics (ICERM), and the author is grateful to ICERM for its hospitality. Research supported in part by NSF Research and Training grant DMS 1147523}
\maketitle

\begin{abstract}
We prove that for radial Fourier multipliers $m: \mathbb{R}^3\to\mathbb{C}$ supported compactly away from the origin, $T_m$ is restricted strong type (p,p) if $K=\widehat{m}$ is in $L^p(\mathbb{R}^3)$, in the range $1<p<\frac{13}{12}$. We also prove an $L^p$ characterization for radial Fourier multipliers in four dimensions; namely, for radial Fourier multipliers $m: \mathbb{R}^4\to\mathbb{C}$ supported compactly away from the origin, $T_m$ is bounded on $L^p(\mathbb{R}^4)$ if and only if $K=\widehat{m}$ is in $L^p(\mathbb{R}^4)$, in the range $1<p<\frac{36}{29}$. Our method of proof relies on a geometric argument that exploits bounds on sizes of multiple intersections of three-dimensional annuli to control numbers of tangencies between pairs of annuli in three and four dimensions. \end{abstract}

\section{Introduction and statement of results}\label{intro}
In this paper we study radial multiplier transformations whose symbol is compactly supported away from the origin. These are operators $T_m$ defined via the Fourier transform by 
\begin{align*}
\mathcal{F}[{T_mf}](\xi)=m(\xi)\wh{f}(\xi),
\end{align*}
where $m:\mathbb{R}^d\to\mathbb{C}$ is a bounded, measurable, {radial} function supported in a compact subset of $\{\xi: 1/2<|\xi|<2\}$.
\newline
\indent
In the cases $p\ne 1, 2$, it is generally believed that is is impossible to give a reasonable characterization of all multiplier operators which are bounded on $L^p$. However, for \textit{radial} Fourier multipliers, a characterization can be obtained for an appropriate range of $p$. In \cite{hns}, Heo, Nazarov, and Seeger prove a strikingly simple characterization of radial multipliers that are bounded on $L^p(\mathbb{R}^d)$ in dimensions $d\ge 4$ for $1<p<\frac{2d-2}{d+1}$. 
\begin{customthm}{A}\label{hnsthm}
Let $d\ge 2$. If $m:\mathbb{R}^d\to\mathbb{C}$ is radial and supported in a compact subset of $\{\xi: 1/2<|\xi|<2\}$, the multiplier operator $T_m$ is bounded on $L^p(\mathbb{R}^d)$ if and only if the kernel $K=\widehat{m}$ is in $L^p(\mathbb{R}^d)$, in the range $1<p<\frac{2d-2}{d+1}$. \end{customthm}
The characterization in \cite{hns} was motivated by the earlier work \cite{gs} of Garrig\'{o}s and Seeger, where the authors obtained a similar characterization of all convolution operators with radial kernels acting on the space $L_{\text{rad}}^p$ of radial $L^p$ functions, in the larger range $1<p<\frac{2d}{d+1}$. 
\begin{customthm}{B}
Let $d\ge 2$. If $m:\mathbb{R}^d\to\mathbb{C}$ is radial and supported in a compact subset of $\{\xi: 1/2<|\xi|<2\}$, the multiplier operator $T_m$ is bounded on $L_{\text{rad}}^p(\mathbb{R}^d)$ if and only if the kernel $K=\widehat{m}$ is in $L^p(\mathbb{R}^d)$, in the range $1<p<\frac{2d}{d+1}$.
\end{customthm}
This range $1<p<\frac{2d}{d+1}$ is the optimal range for their result to hold, since for $p\ge \frac{2d}{d+1}$ one may construct radial kernels in $L^p$ that have Fourier transforms which are supported compactly away from the origin but which are also unbounded. By the same reasoning, the range $1<p<\frac{2d}{d+1}$ is also the largest possible range in which one could hope for the characterization from Theorem \ref{hnsthm} to hold. Thus one might propose the following conjecture, which we will refer to as the ``Radial Fourier Multiplier Conjecture."
\begin{conjecture}\label{conj}
Let $d\ge 2$. If $m:\mathbb{R}^d\to\mathbb{C}$ is radial and supported in a compact subset of $\{\xi: 1/2<|\xi|<2\}$, the multiplier operator $T_m$ is bounded on $L^p(\mathbb{R}^d)$ if and only if the kernel $K=\widehat{m}$ is in $L^p(\mathbb{R}^d)$, in the range $1<p<\frac{2d}{d+1}$.
\end{conjecture}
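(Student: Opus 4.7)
I note that Conjecture~\ref{conj} remains open in every dimension $d\ge 2$; my plan follows the framework underlying Theorem~\ref{hnsthm} (the Heo-Nazarov-Seeger theorem) and identifies the central geometric obstacle to closing the full range. The necessity direction ($T_m$ bounded $\Rightarrow K\in L^p$) is standard: one tests $T_m$ against bump functions at various scales and uses the annular support of $m$ to recover pointwise information on $K$. For sufficiency, I would perform a Calder\'on-Zygmund atomic decomposition adapted to the radial geometry, writing $K=\sum_j c_j a_j$ with each atom $a_j$ supported in a shell $\{|x|\sim r_j\}$, normalized so that $\|a_j\|_\infty \lesssim r_j^{-d}$, and satisfying suitable cancellation. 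Since $m$ is compactly supported away from the origin, the frequency symbols $\widehat{a_j}$ are smooth and essentially localized to $\{1/2<|\xi|<2\}$. Standard extrapolation then reduces the $L^p$ bound for $T_m$ to a weak-type endpoint estimate for arbitrary sign combinations $\sum_j \varepsilon_j T_{\widehat{a_j}}$.

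Next, because $\widehat{a_j}$ is frequency-localized at scale $r_j^{-1}$, each $T_{\widehat{a_j}}$ may be replaced, up to a negligible Schwartz tail, by convolution with a piece of a spherical shell of radius $r_j$ centered at the spatial center $z_j$ of $a_j$. After dualization and pigeonholing in scales $r_j\sim r$, the sufficiency statement is recast as a geometric bound
\begin{equation*}
\Big\|\sum_j \varepsilon_j \chi_{A(z_j,r)}\Big\|_{L^{p'}(\mathbb R^d)} \lesssim \Big(\sum_j r^d\Big)^{1/p'}
\end{equation*}
uniformly in the centers $(z_j)$ and signs $(\varepsilon_j)$, where $A(z,r)$ denotes an $O(1)$-thickened sphere of radius $r$ about $z$. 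Expanding the $p'$-th power, the main terms count pairs $(j,k)$ for which $A(z_j,r)$ and $A(z_k,r)$ have unusually large intersection -- that is, pairs essentially in \emph{tangency} position $\big|\,|z_j-z_k|-2r\,\big|=O(1)$.

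The main obstacle is the resulting incidence-geometric statement: among $N$ annuli of common radius $r$ in $\mathbb R^d$, the number of tangent pairs should be at most $C\,N^{(d+1)/d}(\log N)^{O(1)}$. Such a sharp tangency bound would, via the above reduction, yield the full range $1<p<\frac{2d}{d+1}$. It is currently known only in weaker forms, and a proof in arbitrary dimension would likely require deep incidence-geometric tools such as polynomial partitioning adapted to the curved $(d-1)$-sphere. The present paper circumvents this bottleneck by controlling tangencies indirectly through multiple intersections of three-dimensional annuli, which succeeds in $d=3,4$ but yields strictly smaller ranges. Producing the sharp tangent-pair incidence bound in arbitrary dimension is, in my view, the principal obstruction to the full conjecture.
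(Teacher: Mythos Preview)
The conjecture is open, and the paper does not prove it; your task was thus to sketch a plausible strategy. Your outline, however, contains a genuine conceptual error in the reduction. You decompose the radial kernel $K$ into atoms $a_j$ supported in shells $\{|x|\sim r_j\}$ and then speak of their ``spatial centers $z_j$,'' but since $K$ is radial every such atom is centered at the origin, so all $z_j=0$ and your displayed inequality collapses to a trivial estimate for a single annulus. The correct Heo--Nazarov--Seeger reduction (which this paper follows) writes $K\ast f=\iint \kappa(r)f(y)F_{y,r}\,dy\,dr$ with $F_{y,r}=\sigma_r\ast\psi(\cdot-y)$; the centers $y$ come from discretizing the \emph{input} $f$, not the kernel, while the radii $r$ come from the radial profile of $K$. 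The discretized object is therefore a sum over pairs $(y,r)$ with both coordinates varying, and this joint $(y,r)$ structure is essential.

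Even after correcting the reduction, your identification of the obstacle as a tangent-pair count for annuli of a \emph{single} radius misses the mark. With varying radii the dominant interactions are the internal tangencies $\bigl||y-y'|-|r-r'|\bigr|=O(1)$, not only the external case $\bigl||y-y'|-(r+r')\bigr|=O(1)$; pigeonholing to a single $r$ throws away precisely the mechanism that makes the problem hard. Moreover, the paper's (and HNS's) actual framework does not reduce to a pure incidence count: one performs a density decomposition $\mathcal E_k=\bigcup_u\mathcal E_k(u)$ and plays an $L^2$ bound (improving as density $u$ decreases) against a support-size bound (improving as $u$ increases). The genuine obstruction is sharpening the $L^2$ inequality~\eqref{hnsl22} to $\|G_u\|_2^2\lesssim u^{0+}\sum_k 2^{k(d-1)}\#\mathcal E_k$; the tangency geometry enters in bounding the scalar products $\langle F_{y,r},F_{y',r'}\rangle$ inside that $L^2$ estimate, not as a standalone incidence theorem of the form you state.
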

One can appreciate the strength of this conjecture by noting that since $\frac{2d}{d+1}$ is the critical value for the Bochner-Riesz Conjecture, the Bochner-Riesz Conjecture (and hence also the Restriction and Kakeya Conjectures) would follow as a special case from Conjecture \ref{conj}. However, the statement of Conjecture \ref{conj} is far more general than the Bochner-Riesz Conjecture, since it makes no a priori assumptions whatsoever on the regularity of the multiplier.
\newline
\indent
The arguments of \cite{hns} did not yield any results about radial Fourier multipliers in $\mathbb{R}^3$. In this paper, we will improve a key lemma of \cite{hns} in three dimensions to very nearly obtain a characterization of compactly supported radial Fourier multipliers $m$ bounded on $L^p(\mathbb{R}^3)$, in the range $1<p<\frac{13}{12}$.

\begin{theorem}\label{mainthm}
Let $m$ be a radial Fourier multiplier in $\mathbb{R}^3$ supported in $\{1/2<|\xi|<2\}$ and let $K=\mathcal{F}^{-1}[m]$. Then for $1<p<13/12$, if $K\in L^p$ then the multiplier operator $T_m$ is restricted strong type $(p, p)$, and moreover 
\begin{align*}
\Norm{K\ast f}_{L^p(\mathbb{R}^3)}\lesssim_p\Norm{K}_{L^p(\mathbb{R}^3)}\Norm{f}_{L^{p, 1}(\mathbb{R}^3)}.
\end{align*}
\end{theorem}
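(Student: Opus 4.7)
The plan is to follow the strategy of Heo--Nazarov--Seeger (Theorem A), sharpened by an improved geometric input adapted to three dimensions. The first step is an atomic decomposition of the radial kernel $K$. Since $m$ is supported in $\{1/2<|\xi|<2\}$, the kernel $K=\widehat{m}$ is smooth, and after a dyadic decomposition $K=\sum_\ell K_\ell$ with $K_\ell$ supported where $|x|\sim 2^\ell$, each piece $K_\ell$ can be written as a superposition of smoothed characteristic functions $\chi_{A_{r,\delta}}$ of thin annuli of radius $r\sim 2^\ell$ and thickness $\delta\sim 1$, with coefficients controlled by $\|K_\ell\|_{L^p}$. The desired inequality then reduces to controlling operators of the form $f\mapsto \sum_j c_j\,\chi_{A_j}*f$ where the $A_j$ are thin three-dimensional annuli, in terms of an $\ell^p$ norm of $(c_j)$ weighted by the thicknesses.

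The second step is to plug in $f=\chi_E$ and run a Christ-type endpoint argument for the restricted strong-type bound. Positivity lets one linearize $K*\chi_E$ against a suitable level set, and standard stopping-time/selection procedures as in \cite{hns} eventually reduce the matter to a purely combinatorial-geometric problem: for a family of thin annuli $\{A_j\}$ with centers $x_j$ and radii $r_j$, control the number of pairs $(j,k)$ which are \emph{tangent}, in the quantitative sense that $|A_j\cap A_k|$ is much larger than the transverse value one expects from two annuli meeting generically. Roughly, the number of such tangent pairs must be essentially linear in the size of the family, and for $p<13/12$ one requires a tangency count with a precise quantitative gain that is sharper than a direct bilinear estimate allows.

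The main new ingredient, and the principal obstacle, is producing such a tangency count in $\mathbb{R}^3$. In dimensions $d\ge 4$ the HNS argument handles tangencies directly, since the tangency locus for a pair of annuli is of sufficiently small dimension; in $\mathbb{R}^3$ this is no longer enough, and the plan is to \emph{deduce} the pair tangency bound from a \emph{trilinear} estimate. Concretely, I would establish a bound of the form
\begin{equation*}
|A_i\cap A_j\cap A_k|\lesssim \Phi(r_i,r_j,r_k;x_i,x_j,x_k),
\end{equation*}
where $\Phi$ is small outside a narrow set of degenerate (essentially coplanar) configurations of the three centers and three radii. The proof of this trilinear bound is the geometric heart of the argument: two spheres in $\mathbb{R}^3$ meet in a circle, intersecting that circle with a thickened third sphere gives a gain over the bilinear measure, and one must track the loss in each degree of freedom of the third center to extract a quantitative improvement.

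Given the trilinear bound, the pair tangency count follows by summing $\sum_{i,j,k}|A_i\cap A_j\cap A_k|$ over triples from the family, comparing to $\sum_{(j,k)\text{ tangent}}|A_j\cap A_k|^{3/2}$ or an analogous Cauchy--Schwarz-type identity, and inverting to solve for the number of tangent pairs. This bound is then inserted into the Christ-type reduction to produce the restricted strong-type inequality, and the exponent $13/12$ emerges from balancing the gain provided by the trilinear step against the bilinear accounting in the endpoint argument.
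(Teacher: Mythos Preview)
Your identification of the trilinear annulus-intersection bound as the key geometric input is correct: the paper proves exactly such a bound (triple intersections of three-dimensional annuli of radius $\sim 2^j$, thickness $O(1)$, with centers $2^l$-separated and $l\ge j/2$, have measure $\lesssim 2^{3(j-l)}$), and this is what distinguishes the three-dimensional argument from HNS. But the mechanism you sketch for converting it into a pair-tangency count --- sum $\sum|A_i\cap A_j\cap A_k|$ and compare to $\sum_{\text{tangent}}|A_j\cap A_k|^{3/2}$ via Cauchy--Schwarz --- is not what the paper does, and a structural ingredient is missing from your outline that the paper uses essentially.

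The missing idea is the exploitation of \emph{product structure}. Because $f=\chi_E$, after discretization the set $\mathcal{E}\subset\mathcal{Y}\times\mathcal{R}$ of (center, radius) pairs is a product $\mathcal{E}_Y\times\mathcal{E}_R$; the paper explicitly flags this as ``crucial'' (it is why the result is only restricted strong type). Inside each cube $Q$ of the density decomposition this is used as a dichotomy on $N_{R,Q}=\#(\mathcal{E}\cap Q)_R$. When $N_{R,Q}$ is small one freezes a pair of radii $r_1,r_2$ (incurring $N_{R,Q}^2$), and the improved scalar-product bound reduces matters to counting centers $y$ with many other centers $y'$ in a \emph{fixed-radius} thin annulus $\{||y-y'|-t|\lesssim 1\}$; the trilinear lemma is then applied not via Cauchy--Schwarz but via a density/separation argument (thin to a $2^l$-separated subset of centers, bound triple overlaps, optimize in $l$) that controls the number of such bad $y$. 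When $N_{R,Q}$ is large one instead uses that for fixed $y,y',r$ only two values of $r'$ give a large interaction, producing a gain of $N_{R,Q}^{-1}$. Balancing the two regimes, then balancing against a trivial bound for separations $\le u^a$, yields $\|G_{u,k}\|_2^2\lesssim u^{11/13+\epsilon}2^{2k}\#\mathcal{E}_k$ and hence $p<13/12$. Your outline never separates the roles of centers and radii in this way; without the product hypothesis you cannot freeze radii, and the variable-radius Cauchy--Schwarz step you propose does not visibly produce this exponent.
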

\begin{remark}
Our proof will also show that $\Norm{K\ast f}_{L^p}\lesssim_p\Norm{K}_{L^{p, 1}}\Norm{f}_{L^p}$, and we expect that $\Norm{K}_{L^{p, 1}}$ could be improved to $\Norm{K}_{L^p}$.
\end{remark}

We will also prove a full $L^p$ characterization for compactly supported radial Fourier multipliers in $\mathbb{R}^4$ in the range $1<p<\frac{36}{29}$, which improves on Heo, Nazarov, and Seeger's result.

\begin{theorem}\label{mainthm2}
Let $m$ be a radial Fourier multiplier in $\mathbb{R}^4$ supported in $\{1/2<|\xi|<2\}$ and let $K=\mathcal{F}^{-1}[m]$. Then for $1<p<36/29$, if $K\in L^p(\mathbb{R}^4)$, the multiplier operator $T_m$ is bounded on $L^p(\mathbb{R}^4)$, and moreover 
\begin{align*}
\Norm{K\ast f}_{L^p(\mathbb{R}^4)}\lesssim_p\Norm{K}_{L^p(\mathbb{R}^4)}\Norm{f}_{L^{p}(\mathbb{R}^4)}.
\end{align*}

\end{theorem}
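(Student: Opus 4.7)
The plan is to follow the framework of Heo, Nazarov and Seeger (Theorem~A), replacing their tangency count for pairs of annuli in $\mathbb{R}^4$ by a sharper estimate that is built from the bound on multiple intersections of three-dimensional annuli promised in the abstract. Because $m$ is a radial multiplier with Fourier support in $\{1/2<|\xi|<2\}$, stationary phase on the sphere gives the kernel $K=\widehat{m}$ an essentially annular structure at each scale, so a standard atomic/duality reduction (as in the proof of Theorem~A) converts the $L^p$ boundedness of $T_m$ into a quasi-orthogonality estimate of the form
\begin{equation*}
\Bigl\|\sum_j \varepsilon_j\, \chi_{A_{r_j}(x_j)}\Bigr\|_p^p \lesssim \sum_j r_j^{\,3},
\end{equation*}
where $A_r(x)$ is a unit-thickness annulus of radius $r\ge 1$ centered at $x$ in $\mathbb{R}^4$, the signs are $\varepsilon_j\in\{\pm 1\}$, and the annuli are well-separated in phase space.

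To establish this estimate I would run a Calder\'on-Zygmund style level-set argument: for each dyadic threshold $\lambda$, the measure of the superlevel set is controlled by a $TT^*$ computation whose right-hand side counts \emph{tangent pairs} of annuli, i.e.\ pairs $(i,j)$ for which $A_{r_i}(x_i)\cap A_{r_j}(x_j)$ has essentially maximal volume. Interpolating the resulting restricted weak-type bound against the trivial $L^2$ estimate will then recover the full strong-type $L^p$ bound for $1<p<36/29$, with the upper endpoint controlled precisely by the exponent appearing in the tangent-pair count.

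The main obstacle is producing the improved tangency bound in four dimensions; the Heo-Nazarov-Seeger pairwise count only yields $p<6/5$. The strategy is to slice the $\mathbb{R}^4$ configuration by generic affine hyperplanes $H\cong\mathbb{R}^3$ and to rephrase each four-dimensional tangency as a high-multiplicity intersection in a one-parameter family of three-dimensional annular configurations: on a positive-measure set of slices $H$, the sliced annuli $A_{r_i}(x_i)\cap H$ and $A_{r_j}(x_j)\cap H$ together with a third annulus capturing the ambient curvature must meet in a set of maximal volume, so tangent pairs in $\mathbb{R}^4$ generate genuine multiple intersections of $\mathbb{R}^3$ annuli on a large set of hyperplanes. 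Plugging in the improved bound on such triple-or-higher intersections from the proof of Theorem~\ref{mainthm} and integrating over the slicing parameter then beats the HNS pair count, and balancing the numerology (the scale ratio $r_i/r_j$, the annular thickness, and the measure of slicing parameters contributing a given tangency) is what produces the exponent $36/29$. The remainder of the argument — the atomic reduction, the Calder\'on-Zygmund decomposition, and the final Marcinkiewicz interpolation — is then carried out exactly as in the proof of Theorem~A.
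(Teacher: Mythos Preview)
Your proposal has the right high-level shape --- discretize, prove an improved $L^2$ inequality using a triple-intersection bound for annuli, and interpolate against a support estimate --- but the mechanism you describe for extracting the geometric gain is not the one that works, and a crucial structural ingredient is missing.

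First, the slicing story is not how the triple-intersection bound enters. You propose to slice the $\mathbb{R}^4$ configuration by hyperplanes $H\cong\mathbb{R}^3$ and manufacture a ``third annulus capturing the ambient curvature'' on each slice. There is no such third annulus; slicing a tangent \emph{pair} of $4$-annuli gives a tangent pair of $3$-annuli and nothing more. The way the triple intersection actually appears is different: after fixing two radii $r_1,r_2$, the scalar product $\langle F_{y,r_1},F_{y',r_2}\rangle$ is large only when $y'$ lies in a thin $\mathbb{R}^4$-annulus about $y$. One then asks, for how many centers $y$ can there be $\ge 2^s$ other centers $y'$ in that annulus? A density/pigeonhole argument produces many $2^l$-separated centers, and the bound on the volume of the intersection of \emph{three} $\mathbb{R}^4$-annuli with $2^l$-separated centers (proved by slicing to $\mathbb{R}^3$ in the geometric lemma, not in the main argument) forces most of the $2^s$ points to be distinct across centers. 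This yields a nontrivial bound on the count $K(Q,s)$, which after summation in $s$ gives the improved $L^2$ estimate with exponent $u^{11/18+\epsilon}$; combined with the $u^{-1}$ support bound one gets $u^{29/36-1/p}$ and hence $p<36/29$.

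Second, and more seriously, you omit the structural ingredient that makes the $\mathbb{R}^4$ result a genuine $L^p$ bound rather than just restricted strong type. The triple-intersection argument above needs the discrete set $\mathcal{E}$ to be (nearly) a product $\mathcal{E}_Y\times\mathcal{E}_R$, so that one can freeze radii and work with the $Y$-projection. For the restricted strong-type result in $\mathbb{R}^3$ this is automatic because $|f|\le\chi_E$ forces the level sets of $\gamma(y,r)=h(r)f(y)$ to be products. For a full $L^p$ bound one must instead exploit that $\gamma(y,r)=\gamma_1(y)\gamma_2(r)$ is a tensor, decompose the level set $\{|\gamma|\approx 2^j\}$ into strips $\mathcal{E}_k^{\gamma,j,b}=\{\gamma_1\approx 2^b,\ \gamma_2\approx 2^{j-b}\}$ which \emph{are} products, control the number of relevant $b$'s by relating it to neighbouring level sets $\mathcal{E}_k^{\gamma,l}$ with $l>j$, and carry the resulting tail $\sum_{l\ge j}2^{(l-j)/10}\#\mathcal{E}_k^{\gamma,l}$ through the dyadic interpolation lemma. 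Your outline has no analogue of this step, and without it the argument only delivers a restricted strong-type bound. The ``Calder\'on--Zygmund level-set plus Marcinkiewicz interpolation'' scheme you sketch does not supply the missing product structure.
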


Our proofs of Theorems \ref{mainthm} and \ref{mainthm2} refine the arguments of \cite{hns} while simultaneously incorporating new geometric input. A key divergence from the arguments of \cite{hns} is the exploitation of the underlying ``tensor product structure" inherent in the problem, a notion which will become clearer later. This, combined with a geometric argument involving sizes of multiple intersections of three-dimensional annuli, allows one to take advantage of improved scalar product estimates which were not used in \cite{hns}. However, since we exploit the tensor product structure of the problem, we are currently not able to deduce any local smoothing results for the wave equation as corollaries, as was able to be done in \cite{hns}.
\newline
\indent
The outline of the paper is as follows. The first portion of the paper will be devoted to the proof of Theorem \ref{mainthm}, which is less technical than the proof of Theorem \ref{mainthm2}. The second portion will give the proof of Theorem \ref{mainthm2}. At the end, we provide as an appendix the proof of the geometric lemma used in the proofs of both theorems.

\section{Preliminaries and reductions}\label{prelimsec}

In this section we will collect some necessary preliminary results and reductions. Versions of these results can be found in \cite{hns}, but we reproduce them here for completeness. In general, this section of the paper will very closely follow \cite{hns}, and for convenience we choose to adopt similar notation.

\subsection*{Discretization and density decomposition of sets}
The first step will be to discretize our problem, and in preparation for this we will first need to introduce some notation. Let $\mathcal{Y}$ be a $1$-separated set of points in $\mathbb{R}^3$ and let $\mathcal{R}$ be a $1$-separated set of radii $\ge 1$. Let $\mathcal{E}\subset\mathcal{Y}\times\mathcal{R}$ be a finite set that is also a \textit{product}, i.e. $\mathcal{E}=\mathcal{E}_Y\times\mathcal{E}_R$ where $\mathcal{E}_Y\subset\mathcal{Y}$ and $\mathcal{E}_R\subset\mathcal{R}$. (The assumption that $\mathcal{E}$ is a product was not used in \cite{hns}, but will be crucial for our argument.)
\newline
\indent
Let 
\begin{align*}
u\in\mathcal{U}=\{2^{\nu}, \nu=0, 1, 2,\ldots\}
\end{align*} 
be a collection of dyadic indices. For each $k$, let $\mathfrak{B}_k$ denote the collection of all $4$-dimensional balls of radius $\le 2^k$. For a ball $B$, let $\text{rad}(B)$ denote the radius of $B$. Following \cite{hns}, define:
\begin{align*}
\mathcal{R}_k:=\mathcal{R}\cap [2^k, 2^{k+1}),
\end{align*}
\begin{align*}
\mathcal{E}_k:=\mathcal{E}\cap(\mathcal{Y}\times\mathcal{R}_k),
\end{align*}
\begin{align*}
\wh{\mathcal{E}}_k(u):=\{(y, r)\in\mathcal{E}_k:\,\exists B\in\mathfrak{B}_k\text{ such that }\#(\mathcal{E}_k\cap B)\ge u\,\text{rad}(B)\},
\end{align*}
\begin{align*}
\mathcal{E}_k(u)=\wh{\mathcal{E}}_k(u)\setminus\bigcup_{\substack{u^{\prime}\in\mathcal{U}\\u^{\prime}>u}}\wh{\mathcal{E}}_k(u^{\prime}).
\end{align*}
We will refer to $u$ as the \textit{density} of the set $\mathcal{E}_k(u)$. Note that we have the decomposition
\begin{align*}
\mathcal{E}_k=\bigcup_{u\in\mathcal{U}}\mathcal{E}_k(u).
\end{align*}
Let $\sigma_r$ denote the surface measure on $rS^{2}$, the $2$-sphere centered at the origin of radius $r$. Now fix a smooth, radial function $\psi_0$ which is supported in the ball centered at the origin of radius $1/10$ such that $\wh{\psi_0}$ vanishes to order $40$ at the origin. Let $\psi=\psi_0\ast\psi_0$. For $y\in\mathcal{Y}$ and $r\in\mathcal{R}$, define
\begin{align*}
F_{y, r}=\sigma_r\ast\psi(\cdot-y).
\end{align*}
For a given function $c:\mathcal{Y}\times\mathcal{R}\to\mathbb{C}$, further define 
\begin{align*}
G_{u, k}:=\sum_{(y, r)\in\mathcal{E}_k(u)}c(y, r)F_{y, r},
\end{align*}
\begin{align*}
G_{u}:=\sum_{k\ge 0}G_{u, k},
\end{align*}
\begin{align*}
G_{k}:=\sum_{u\in\mathcal{U}}G_{u, k}.
\end{align*}

\subsection*{An interpolation lemma}
As a preliminary tool, we will need the following dyadic interpolation lemma.

\begin{lemma}\label{dyad}
Let $0<p_0<p_1<\infty$. Let $\{F_j\}_{j\in\mathbb{Z}}$ be a sequence of measurable functions on a measure space $\{\Omega, \mu\}$, and let $\{s_j\}$ be a sequence of nonnegative numbers. Assume that for all $j$, the inequality
\begin{align}\label{dl1}
\Norm{F_j}_{p_{\nu}}^{p_{\nu}}\le 2^{j p_{\nu}}M^{p_{\nu}}s_j
\end{align} 
holds for $\nu=0$ and $\nu=1$. Then for all $p\in (p_0, p_1)$, there is a constant $C=C(p_0, p_1, p)$ such that
\begin{align}\label{dl2}
\Norm{\sum_jF_j}_p^p\le C^pM^p\sum_j2^{jp}s_j.
\end{align}
\end{lemma}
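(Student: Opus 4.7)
The plan is to combine the layer-cake formula with a dyadic splitting of $G := \sum_j F_j$ into a ``low-$j$'' part, to be controlled in $L^{p_1}$, and a ``high-$j$'' part, to be controlled in $L^{p_0}$. First, by replacing $F_j$ with $F_j/M$ I reduce to the case $M = 1$, so (\ref{dl1}) becomes $\|F_j\|_{p_\nu}^{p_\nu} \le 2^{jp_\nu} s_j$ and the goal is $\|G\|_p^p \lesssim \sum_j 2^{jp} s_j$; the $M^p$ factor in (\ref{dl2}) is then recovered by rescaling at the end.

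Using the distribution function,
\[
\|G\|_p^p \sim \sum_{k \in \mathbb{Z}} 2^{kp}\,|\{x : |G(x)| > 2^k\}|.
\]
For each $k$ I would split $G = A_k + B_k$ with $A_k = \sum_{j \le k} F_j$ and $B_k = \sum_{j > k} F_j$; since the $L^{p_\nu}$ size of $F_j$ is at most $2^j s_j^{1/p_\nu}$, the $L^{p_1}$ Chebyshev bound is most efficient when $2^j \le 2^k$ and the $L^{p_0}$ bound is better when $2^j > 2^k$. Chebyshev then gives
\[
|\{|G| > 2^k\}| \lesssim 2^{-k p_1} \|A_k\|_{p_1}^{p_1} + 2^{-k p_0} \|B_k\|_{p_0}^{p_0}.
\]

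To bound each norm I fix a small $\delta > 0$ with $p_0(1+\delta) < p < p_1(1-\delta)$, which is possible because $p_0 < p < p_1$. If $p_\nu > 1$ I would apply Minkowski in $j$ and then H\"older against a geometric weight $2^{\pm j\delta}$; if $p_\nu \le 1$ the concavity of $x \mapsto x^{p_\nu}$ gives the same type of bound by direct subadditivity. In either case,
\[
\|A_k\|_{p_1}^{p_1} \lesssim_\delta 2^{k\delta p_1} \sum_{j \le k} 2^{jp_1(1-\delta)} s_j, \qquad \|B_k\|_{p_0}^{p_0} \lesssim_\delta 2^{-k\delta p_0} \sum_{j > k} 2^{jp_0(1+\delta)} s_j.
\]
Substituting and swapping sums via Fubini, the $A_k$ contribution reduces to
\[
\sum_j 2^{jp_1(1-\delta)} s_j \sum_{k \ge j} 2^{k(p - p_1(1-\delta))} \lesssim \sum_j 2^{jp} s_j,
\]
where I use $p - p_1(1-\delta) < 0$ to sum the inner geometric series; the $B_k$ contribution is symmetric, using $p - p_0(1+\delta) > 0$ so that $\sum_{k < j} 2^{k(p - p_0(1+\delta))}$ converges.

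The only real obstacle is choosing a single $\delta > 0$ making \emph{both} geometric series converge at $k = \pm\infty$, which is exactly what the open-interval condition $p \in (p_0, p_1)$ supplies. Whether $p_\nu$ is larger or smaller than $1$ is a minor bookkeeping matter; the scale-dependent split $j \le k$ vs.\ $j > k$, combined with the preliminary rescaling $M \mapsto 1$, is precisely what conspires to produce the correct power $M^p$ on the right-hand side rather than $M^{p_0}$ or $M^{p_1}$.
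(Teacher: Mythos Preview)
The paper does not actually supply a proof of this lemma; it is stated as a preliminary tool (versions of it appear in \cite{hns}), so there is no in-paper argument to compare against. Your proof is correct and is the standard layer-cake/Marcinkiewicz-style argument for this kind of dyadic interpolation: the split $A_k+B_k$ at the threshold $j=k$, Chebyshev in $L^{p_1}$ and $L^{p_0}$ respectively, the weighted H\"older (or subadditivity when $p_\nu\le 1$) to control $\|A_k\|_{p_1}$ and $\|B_k\|_{p_0}$, and the Fubini swap with the two geometric series summed using $p_0(1+\delta)<p<p_1(1-\delta)$ all go through exactly as you describe. The reduction to $M=1$ by scaling is clean and correctly recovers the $M^p$ dependence.
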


\subsection*{The discretized $L^p$ inequality}

Our goal is to prove the following proposition, which we will see implies our main result for compactly supported multipliers.
\begin{proposition}\label{mainprop}
Let $\mathcal{E}$ and $\mathcal{E}_k$ be as above (recall that $\mathcal{E}$ has product structure). Let $c:\mathcal{E}\to\mathbb{C}$ be a function satisfying $|c(y, r)|\le 1$ for all $(y, r)\in\mathcal{E}$. Then for $1<p<13/12$,
\begin{align*}
\Norm{\sum_{(y, r)\in\mathcal{E}}c(y, r)F_{y, r}}_p^p\lesssim_p\sum_k 2^{2k}\#\mathcal{E}_k.
\end{align*}
\end{proposition}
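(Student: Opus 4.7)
The plan is to use the dyadic density decomposition $\mathcal{E}_k=\bigcup_u \mathcal{E}_k(u)$ already introduced in the excerpt, write $G := \sum_{(y,r)\in\mathcal{E}} c(y,r)F_{y,r} = \sum_k \sum_u G_{u,k}$, and apply the interpolation Lemma~\ref{dyad} twice (once in $u$ and once in $k$) to reduce the proposition to proving, for each dyadic pair $(u,k)$, an inequality of the form
\[
\Norm{G_{u,k}}_p^p \;\lesssim\; u^{-\theta(p)} \cdot 2^{2k}\cdot \#\mathcal{E}_k(u)
\]
for some $\theta(p)>0$ in the whole range $1<p<13/12$. The negative power of $u$ is what will make the geometric series in $u\in\mathcal{U}$ converge, and Lemma~\ref{dyad} then sums the result in $k$ to give the target $\sum_k 2^{2k}\#\mathcal{E}_k$. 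I would obtain this bound by interpolating between an easy estimate at an $L^{p_0}$ with $p_0$ very close to $1$ (following essentially from $\Norm{F_{y,r}}_{p_0}^{p_0}\lesssim 2^{2k}$ and crude triangle inequality) and a more refined $L^2$ estimate carrying all the density gain.

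The $L^2$ estimate is the heart of the argument. Writing
\[
\Norm{G_{u,k}}_2^2 \;=\; \sum_{(y,r),(y',r')\in\mathcal{E}_k(u)} c(y,r)\,\overline{c(y',r')} \int F_{y,r}F_{y',r'},
\]
one splits the pairs into three regimes: the diagonal (contributing the expected $\sim 2^{2k}\#\mathcal{E}_k(u)$); transversal off-diagonal pairs, for which the classical scalar product estimate for thickened two-spheres yields near-orthogonality and a sum comparable to the main term; and tangential pairs (those with $|y-y'|\approx r\pm r'$), whose scalar product can be as large as $\sim 2^k$. The transversal sum is benign, so the entire difficulty concentrates on producing a sharp upper bound for the number of tangential pairs inside $\mathcal{E}_k(u)$.

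The tangency count is where the new geometric input enters. A tangency between $F_{y,r}$ and $F_{y',r'}$ forces both thickened annuli to pass through a small common region of $\mathbb{R}^3$, so a large number of pairwise tangencies produces many annuli with a common intersection point. The appendix lemma on multiple intersections of three-dimensional annuli then translates this into many $k$-fold overlaps at a given location, which in turn forces many $(y,r)\in\mathcal{E}_k$ to lie in a small ball of the four-dimensional parameter space $\mathcal{Y}\times\mathcal{R}$; by definition of $\mathcal{E}_k(u)$ this over-concentration is incompatible with the prescribed density $u$, giving the required tangency bound. The hypothesis that $\mathcal{E}=\mathcal{E}_Y\times\mathcal{E}_R$ is a product is indispensable here: it lets one freeze the radius variable and apply the purely spatial multiple-intersection lemma to $\mathcal{E}_Y$, exploiting the ``tensor-product'' structure which was not available in \cite{hns}.

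The main obstacle I expect is the quantitative bookkeeping: extracting from the appendix lemma an exponent $\theta(p)$ large enough to cover the entire range $1<p<13/12$. One must track carefully how the multiplicity gain extracted from multiple intersections compares with the loss suffered on each tangential scalar product, and then optimize all exponents in the interpolation so that the series in $u$ converges up to the claimed threshold rather than a strictly smaller one. Balancing these competing gains is precisely what makes this a proof of restricted strong type $(p,p)$ rather than full strong type, and what fixes the critical value at $p=13/12$.
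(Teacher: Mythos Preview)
Your proposal has a genuine gap at the interpolation step. You plan to combine a crude $L^{p_0}$ bound near $p_0=1$ (via triangle inequality) with an $L^2$ bound ``carrying all the density gain.'' But the $L^2$ estimate in this problem necessarily carries a \emph{positive} power of $u$: even after the full geometric tangency argument the paper obtains $\Norm{G_u}_2^2 \lesssim_\epsilon u^{11/13+\epsilon}\sum_k 2^{2k}\#\mathcal{E}_k$, and the triangle inequality $\Norm{G_{u,k}}_1 \lesssim 2^{2k}\#\mathcal{E}_k(u)$ has no $u$-decay at all. Interpolating these two gives $\Norm{G_u}_p$ with a nonnegative power of $u$, so the sum over $u\in\mathcal{U}$ diverges and you never reach Proposition~\ref{mainprop}. (Note also that the diagonal contribution to $\Norm{G_{u,k}}_2^2$ alone is already $\approx 2^{2k}\#\mathcal{E}_k(u)$, so no $L^2$ estimate can ever have a negative power of $u$.)

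The ingredient you are missing is the \emph{support-size} bound, Lemma~\ref{L1lemma}: $\meas(\supp G_{u,k})\lesssim u^{-1}2^{2k}\#\mathcal{E}_k$. It is this $u^{-1}$ --- not the $L^2$ analysis --- that supplies the density decay. The paper feeds this into H\"older, $\Norm{G_u}_p \le |\supp G_u|^{1/p-1/2}\Norm{G_u}_2$, and the combination $u^{-(1/p-1/2)}\cdot u^{11/26+\epsilon}=u^{12/13-1/p+\epsilon}$ is summable exactly when $p<13/12$. The geometric lemma does not make the $L^2$ bound decay in $u$; it merely reduces the growth from the trivial $u^1$ (which gives nothing in $d=3$) to $u^{11/13}$, so that after multiplication by $u^{-(1/p-1/2)}$ the net exponent becomes negative. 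Two smaller corrections: Lemma~\ref{dyad} is not used in the proof of Proposition~\ref{mainprop} (the $u$-sum is a plain geometric series, and the $k$-sum is absorbed into the $L^2$ estimate via a separate comparable/incomparable radii argument); and the product structure enters not by ``freezing the radius'' but through the relation $N_{Y,Q}\cdot N_{R,Q}\lesssim u2^m$, which lets one balance an estimate good for small $N_{R,Q}$ against one good for large $N_{R,Q}$.
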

Using the dyadic interpolation lemma (Lemma \ref{dyad}), we obtain the following corollary.
\begin{corollary}\label{maincor}
Let $E$ be any measurable set of finite measure, and $\chi_E$ its characteristic function. Suppose that $f$ is a measurable function satisfying $|f|\le\chi_E$. Then for $1<p<13/12$, we have
\begin{align}\label{dis}
\Norm{\sum_{(y, r)\in\mathcal{Y}\times\mathcal{R}}\gamma(r)f(y)F_{y, r}}_p\lesssim\bigg(\sum_{(y, r)\in\mathcal{Y}\times\mathcal{R}}|\gamma(r)\chi_E(y)|^pr^2\bigg)^{1/p}.
\end{align}
Also
\begin{align}\label{cont}
\Norm{\int_{\mathbb{R}^d}\int_1^{\infty}h(r)f(y)F_{y, r}\,dr\,dy}_p\lesssim\bigg(\int_{\mathbb{R}^d}\int_1^{\infty}|h(r)\chi_E(y)|^pr^2\,dr\,dy\bigg)^{1/p}.
\end{align}
\end{corollary}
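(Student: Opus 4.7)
My plan is to deduce both inequalities from Proposition \ref{mainprop} by dyadically decomposing the coefficient according to its magnitude and then invoking Lemma \ref{dyad}. The key point is that the coefficient $\gamma(r)f(y)$ is a \emph{tensor}, so truncating by the size of $\gamma(r)$ alone automatically preserves the product hypothesis required by Proposition \ref{mainprop}.

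\textbf{Discrete inequality \eqref{dis}.} For each $j\in\Z$ set
\begin{align*}
\gamma_j(r)=\gamma(r)\,\mathbf{1}_{\{2^j\le|\gamma(r)|<2^{j+1}\}},\qquad F_j=\sum_{(y,r)\in\mathcal{Y}\times\mathcal{R}} f(y)\gamma_j(r)F_{y,r},
\end{align*}
and $\mathcal{E}^{(j)}=\{y\in\mathcal{Y}:\chi_E(y)\ne 0\}\times\{r\in\mathcal{R}:\gamma_j(r)\ne 0\}$. The coefficient $c_j(y,r):=f(y)\gamma_j(r)/2^{j+1}$ satisfies $|c_j|\le 1$ on $\mathcal{E}^{(j)}$ and vanishes elsewhere, while $\mathcal{E}^{(j)}$ is a product of $1$-separated subsets of $\mathcal{Y}$ and $\mathcal{R}$, so applying Proposition \ref{mainprop} to $2^{-(j+1)}F_j$ gives, for each $q\in(1,13/12)$,
\begin{align*}
\|F_j\|_q^q\;\lesssim_q\;2^{jq}\sum_k 2^{2k}\,\#\mathcal{E}^{(j)}_k.
\end{align*}
Picking $p_0<p<p_1$ inside $(1,13/12)$ and applying Lemma \ref{dyad} with $s_j:=\sum_k 2^{2k}\,\#\mathcal{E}^{(j)}_k$ then yields
\begin{align*}
\Big\|\sum_jF_j\Big\|_p^p\;\lesssim\;\sum_j 2^{jp}s_j.
\end{align*}
Since for each $r$ exactly one $j$ has $\gamma_j(r)\ne 0$ and for that $j$ one has $2^{jp}\le|\gamma(r)|^p$, swapping sums and using $r\sim 2^k$ on $\mathcal{R}_k$ transforms the right-hand side into $\sum_{(y,r)\in\mathcal{Y}\times\mathcal{R}}|\gamma(r)\chi_E(y)|^pr^2$, which is \eqref{dis}.

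\textbf{Continuous inequality \eqref{cont}.} I would reduce \eqref{cont} to \eqref{dis} by averaging over translates of a unit grid. Writing every $y\in\R^d$ as $y=n+y_0$ with $n\in\Z^d$, $y_0\in[0,1)^d$ and every $r\in[1,\infty)$ as $r=m+r_0$ with $m\in\N$, $r_0\in[0,1)$, Fubini gives
\begin{align*}
\int_{\R^d}\!\!\int_1^{\infty}\! h(r)f(y)F_{y,r}\,dr\,dy \;=\; \int_{[0,1)^{d+1}}\!\!\Big[\sum_{\substack{n\in\Z^d,\,m\in\N \\ m+r_0\ge 1}}\! h(m+r_0)f(n+y_0)F_{n+y_0,m+r_0}\Big]\,dy_0\,dr_0.
\end{align*}
For each $(y_0,r_0)$ the inner sum fits the hypotheses of \eqref{dis} with $\mathcal{Y}=y_0+\Z^d$ and $\mathcal{R}=\{m+r_0:m\in\N,\,m+r_0\ge 1\}$. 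Minkowski's integral inequality moves the $L^p$ norm inside the $(y_0,r_0)$-integral, \eqref{dis} bounds the resulting discrete norm, and finally Jensen's inequality on $t\mapsto t^{1/p}$ together with Fubini collapses the average back to the continuous integral on the right side of \eqref{cont}.

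\textbf{Main point.} The real content of the paper is contained in Proposition \ref{mainprop}; the corollary is essentially bookkeeping. The only detail requiring care is that the dyadic splitting preserves the product structure of the index set $\mathcal{E}^{(j)}$, which is automatic precisely because the coefficient factors as $\gamma(r)f(y)$, so the $\gamma$-truncation acts only on the $r$-factor.
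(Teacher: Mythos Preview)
Your proof is correct and follows essentially the same route as the paper. The paper also decomposes into level sets $\mathcal{E}^j=\{(y,r):2^{j-1}<|\gamma(r)\chi_E(y)|\le 2^j\}$, observes these are products (which, since $\chi_E\in\{0,1\}$, coincide with your $\mathcal{E}^{(j)}$), applies Proposition~\ref{mainprop} and then Lemma~\ref{dyad}; for \eqref{cont} it likewise uses Minkowski over $[0,1)^{d+1}$ and then H\"older (your Jensen step) to recombine.
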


\begin{proof}[Proof that Proposition \ref{mainprop} implies Corollary \ref{maincor}]
For $j\in\mathbb{Z}$, define the level sets
\begin{align*}
\mathcal{E}^j:=\{(y, r)\in\mathcal{Y}\times\mathcal{R}:\, 2^{j-1}<|\gamma(r)\chi_E(y)|\le 2^j\}. 
\end{align*}
Notice that $\mathcal{E}^j$ has product structure, so Proposition \ref{mainprop} implies that for $1<p<13/12$,
\begin{align*}
\Norm{\sum_{(y, r)\in\mathcal{E}^j}\gamma(r)f(y)F_{y, r}}_p^p\lesssim_p2^{jp}\sum_{(y, r)\in\mathcal{E}^j}r^2.
\end{align*}
Now apply Lemma \ref{dyad} with $F_j=\sum_{(y, r)\in\mathcal{E}^j}\gamma(r)f(y)F_{y, r}$, $M=1$, and $s_j=\sum_{(y, r)\in\mathcal{E}^j}r^2$ to obtain (\ref{dis}).
\newline
\indent
Now we prove (\ref{cont}). Let $y=z+w$ for $z\in\mathbb{Z}^3$ and $w\in Q_0:=[0, 1)^3$ and $r=n+\tau$ for $n\in\mathbb{N}$ and $0\le\tau<1$. By Minkowski's inequality and (\ref{dis}),
\begin{multline*}
\Norm{\int_{\mathbb{R}^d}\int_1^{\infty}h(r)f(y)F_{y, r}\,dr\,dy}_p
\\
\lesssim\int\int_{Q_0\times [0, 1)}\Norm{\sum_{z\in\mathbb{Z}^d}\sum_{n=1}^{\infty}h(n+\tau)f(z+w)F_{z+w, n+\tau}}_p\,dw\,d\tau
\\
\lesssim\int\int_{Q_0\times [0, 1)}\bigg(\sum_{z\in\mathbb{Z}^d}\sum_{n=1}^{\infty}|h(n+\tau)\chi_E(z+w)|^p(n+\tau)^2\bigg)^{1/p}\,dw\,d\tau
\\
\lesssim\bigg(\int_{\mathbb{R}^d}\int_1^{\infty}|h(r)\chi_E(y)|^pr^2\,dr\,dy\bigg)^{1/p},
\end{multline*}
where in the last step we have used H\"{o}lder's inequality.
\end{proof}

\subsection*{Support size estimates vs. $L^2$ inequalities}
As in \cite{hns}, we will show that the functions ${G_{u, k}}$ either have relatively small support size or satisfy relatively good $L^2$ bounds. We begin with a support size bound from \cite{hns} that improves as the density $u$ increases.
\begin{customlemma}{C}\label{L1lemma}
For all $u\in\mathcal{U}$, the Lebesgue measure of the support of $G_{u, k}$ is $\lesssim u^{-1}2^{2k}\#\mathcal{E}_k$.
\end{customlemma}
We will prove the following $L^2$ inequality which in some sense an improved version of Lemma $3.6$ from \cite{hns}, although the hypotheses are different since it is crucial that we assume that the underlying set $\mathcal{E}$ has product structure. This inequality improves as the density $u$ decreases. In \cite{hns}, the analogous $L^2$ inequality proved is 
\begin{align}\label{hnsl22}
\Norm{G_u}_2^2\lesssim u^{\frac{2}{d-1}}\log(2+u)\sum_k2^{k(d-1)}\#\mathcal{E}_k,
\end{align}
and when $d=3$ the term $u^{\frac{2}{d-1}}$ is equal to $u$. One may check that combining (\ref{hnsl22}) with Lemma \ref{L1lemma} as in the proof of Lemma \ref{Lplemma} below yields no result in three dimensions. We use geometric methods to improve on (\ref{hnsl22}) in three dimensions, and our argument will rely on Lemma \ref{geomlemma} proved later in Section \ref{geomsec}.

\begin{lemma}\label{L2lemma}
Let $\mathcal{E}$, $\mathcal{E}_k$, and $G_{u}$ be as above (recall that $\mathcal{E}$ has product structure). Assume $|c(y, r)|\le 1$ for $(y, r)\in\mathcal{Y}\times\mathcal{R}$. Then for every $\epsilon>0$,
\begin{align*}
\Norm{G_u}_2^2\lesssim_{\epsilon} u^{\frac{11}{13}+\epsilon}\sum_k2^{2k}\#\mathcal{E}_k.
\end{align*}
\end{lemma}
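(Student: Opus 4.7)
The plan is to bound $\|G_u\|_2^2$ by expanding it as a double sum of pairwise inner products $\langle F_{y,r}, F_{y',r'}\rangle$ and then controlling the resulting ``incidence'' quantity by combining the support estimate of Lemma~\ref{L1lemma} with the geometric lemma on multiple intersections of three-dimensional annuli.

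First, I would write
\[
\|G_u\|_2^2 \le \sum_{k,k'} \sum_{\substack{(y,r)\in\mathcal{E}_k(u)\\(y',r')\in\mathcal{E}_{k'}(u)}} |\langle F_{y,r}, F_{y',r'}\rangle|.
\]
The functions $F_{y,r}$ are supported in a thin annulus around the sphere of radius $r$ centered at $y$, so the cross-scale inner products (those with $|k-k'|$ large) acquire a geometric gain coming from the thinness of the smaller annulus. Summing these differences and using Cauchy--Schwarz in $k$ reduces the task to the single-scale bound
\[
\|G_{u,k}\|_2^2 \lesssim_\epsilon u^{11/13+\epsilon}\,2^{2k}\,\#\mathcal{E}_k(u),
\]
with the logarithmic scale-summation losses absorbed into $u^\epsilon$.

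At a single scale, introduce the overlap-multiplicity function
\[
N(x) := \sum_{(y,r)\in\mathcal{E}_k(u)} \mathbf{1}_{F_{y,r}}(x),
\]
so that the off-diagonal part of $\|G_{u,k}\|_2^2$ is controlled by $\int N(x)^2\,dx$ via the standard pointwise estimate $|\langle F_{y,r},F_{y',r'}\rangle|\lesssim |F_{y,r}\cap F_{y',r'}|$. H\"older's inequality then gives
\[
\int N^2\,dx \le |\supp G_{u,k}|^{1/3}\left(\int N^3\,dx\right)^{2/3}.
\]
Lemma~\ref{L1lemma} controls $|\supp G_{u,k}|\lesssim u^{-1}2^{2k}\#\mathcal{E}_k$, so it remains to bound $\int N^3$. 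This quantity is precisely the sum, over triples of thickened three-dimensional annuli at density $u$, of the measure of their triple intersection --- exactly the object controlled by Lemma~\ref{geomlemma}. Extracting from it a bound of the form $\int N^3 \lesssim_\epsilon u^{23/13+\epsilon}\,2^{2k}\,\#\mathcal{E}_k$ (an improvement over the trivial bound $\|N\|_\infty^2\|N\|_1 \lesssim u^2\cdot 2^{2k}\#\mathcal{E}_k$) and substituting into the H\"older inequality yields
\[
\int N^2\,dx \lesssim u^{(2\cdot 23/13 -1)/3}\,2^{2k}\#\mathcal{E}_k = u^{11/13}\,2^{2k}\#\mathcal{E}_k,
\]
which is the target estimate. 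The numerical exponent $11/13$ is forced by matching through Lemma~\ref{dyad} to the endpoint $p<13/12$ of Theorem~\ref{mainthm}.

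The product structure $\mathcal{E}=\mathcal{E}_Y\times\mathcal{E}_R$ enters in a critical way when applying the geometric lemma: freezing three radii $r,r',r''\in\mathcal{E}_R$ leaves the full set $\mathcal{E}_Y$ of centers to vary, so the three-annuli bound in $\mathbb{R}^3$ can be applied uniformly and summed trivially over the radii. The main obstacle of the proof is precisely the extraction of the exponent $23/13$ from Lemma~\ref{geomlemma}: it requires delicately balancing the density constraint defining $\mathcal{E}_k(u)$ against the combinatorial/geometric bound on triples of mutually intersecting three-dimensional annuli, and it is this balance that dictates the endpoint of Theorem~\ref{mainthm}. A secondary technical step is handling the cross-scale terms so that the scale summation costs only a polylogarithmic factor, harmlessly hidden in the $u^\epsilon$.
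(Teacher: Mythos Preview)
Your overall strategy differs substantially from the paper's and has a genuine gap at its center. The paper does \emph{not} bound $\|G_{u,k}\|_2^2$ by $\int N^2$ and then interpolate with $\int N^3$; instead it expands $\|G_{u,k}\|_2^2$ into scalar products $\langle F_{y,r},F_{y',r'}\rangle$ and uses the \emph{oscillatory} Bessel estimate (Lemma~\ref{bessel}), which gives rapid decay unless $|r\pm r'\pm|y-y'||$ is small. This is strictly sharper than the support-overlap bound $|\langle F,F'\rangle|\lesssim|\supp F\cap\supp F'|$ that your approach rests on. The oscillation is what allows the paper, in the ``many rows'' regime (large $N_{R,Q}$), to observe that for fixed $(y,r)$ and fixed $y'$ there are at most two values of $r'$ giving a large scalar product; this yields the alternative estimate (\ref{rand4}) that you have no analogue of. The product structure enters precisely through the dichotomy between $N_{R,Q}$ small (use the geometric lemma on tangencies of the $\mathcal{Y}$-projections) and $N_{R,Q}$ large (use oscillation in $r'$), followed by a banding of the radii into intervals of length $u^a$ and an optimization over $a$ that produces $a=11/13$.

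The specific gap in your sketch is the assertion $\int N^3\lesssim_\epsilon u^{23/13+\epsilon}2^{2k}\#\mathcal{E}_k$, which you simply postulate. Lemma~\ref{geomlemma} bounds the volume of the intersection of three annuli in $\mathbb{R}^3$ having the \emph{same} radius $t$ and well-separated centers; your $N(x)^3$ expansion involves annuli with \emph{different} radii $r_1,r_2,r_3\in[2^k,2^{k+1})$, so the lemma does not apply directly, and ``freezing three radii'' does not help since the lemma's hypothesis is about equal radii, not comparable ones. In the paper, Lemma~\ref{geomlemma} is applied not to the physical annuli $\supp F_{y,r}$ but to the set of centers $y\in\mathcal{E}_Y$ lying in an annulus of radius $t=|r_1-r_2|$ (or $r_1+r_2$) around a fixed center --- i.e.\ to count \emph{tangencies} after fixing a pair of radii --- which is why equal-radius annuli arise. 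Without a justified path to the $\int N^3$ bound, your H\"older argument does not close, and I do not see how to recover the exponent $11/13$ from support overlap alone.
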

Combining Lemma \ref{L1lemma} and Lemma \ref{L2lemma}, we obtain the following $L^p$ bound.
\begin{lemma}\label{Lplemma}
For $p\le 2$, for every $\epsilon>0$,
\begin{align*}
\Norm{G_u}_p\lesssim_{\epsilon}u^{-(1/p-12/13-\epsilon)}(\sum_k2^{2k}\#\mathcal{E}_k)^{1/p}.
\end{align*}
\end{lemma}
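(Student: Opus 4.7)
The plan is to interpolate the $L^2$ bound of Lemma \ref{L2lemma} against a trivial $L^p$ bound coming from the support size estimate of Lemma \ref{L1lemma} via H\"older's inequality. Concretely, for any function $f$ supported in a set $S$ and any $1\le p\le 2$, H\"older's inequality with exponents $2/p$ and $2/(2-p)$ gives
\begin{align*}
\Norm{f}_p^p = \int_S |f|^p \le |S|^{1-p/2} \Norm{f}_2^{p}.
\end{align*}
I would apply this with $f = G_u$ and $S = \bigcup_k \supp(G_{u,k})$.

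First I would estimate $|S|$ by the triangle inequality for measures and Lemma \ref{L1lemma}:
\begin{align*}
|S| \le \sum_{k} |\supp(G_{u,k})| \lesssim u^{-1} \sum_{k} 2^{2k} \#\mathcal{E}_k.
\end{align*}
Next, Lemma \ref{L2lemma} yields $\Norm{G_u}_2^2 \lesssim_\epsilon u^{11/13 + \epsilon} \sum_k 2^{2k}\#\mathcal{E}_k$. Plugging these two into the H\"older inequality above produces
\begin{align*}
\Norm{G_u}_p^p \lesssim_\epsilon u^{-(1-p/2)} u^{(11/13+\epsilon)p/2} \Bigl(\sum_k 2^{2k}\#\mathcal{E}_k\Bigr)^{1-p/2+p/2}.
\end{align*}
Collecting the exponents of $u$ gives $-1 + p/2 + 11p/26 + \epsilon p/2 = -1 + 12p/13 + \epsilon p/2$, so after taking $p$-th roots,
\begin{align*}
\Norm{G_u}_p \lesssim_\epsilon u^{-1/p + 12/13 + \epsilon/2} \Bigl(\sum_k 2^{2k}\#\mathcal{E}_k\Bigr)^{1/p},
\end{align*}
which is the claimed bound after absorbing $\epsilon/2$ into $\epsilon$.

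There is no real obstacle here: the lemma is just a convexity statement once the support bound and $L^2$ bound are in hand, and the computation of the exponents is the only thing to verify. The genuine work is packaged into Lemmas \ref{L1lemma} and \ref{L2lemma}; in particular, the product-structure hypothesis on $\mathcal{E}$ is needed only through Lemma \ref{L2lemma}.
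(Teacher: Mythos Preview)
Your proposal is correct and takes essentially the same approach as the paper: H\"older's inequality combined with the support bound from Lemma~\ref{L1lemma} (summed over $k$) and the $L^2$ bound from Lemma~\ref{L2lemma}. The only cosmetic difference is that you write H\"older in the form $\|f\|_p^p\le |S|^{1-p/2}\|f\|_2^p$ and make the inclusion $\supp(G_u)\subset\bigcup_k\supp(G_{u,k})$ explicit, whereas the paper writes $\|G_u\|_p\le(\text{meas}(\supp G_u))^{1/p-1/2}\|G_u\|_2$ and leaves that inclusion implicit.
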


\begin{proof}[Proof of Lemma \ref{Lplemma} given Lemma \ref{L1lemma} and Lemma \ref{L2lemma}]
By H\"{o}lder's inequality,
\begin{multline*}
\Norm{G_u}_p\lesssim (\text{meas}(\text{supp}(G_u)))^{1/p-1/2}\Norm{G_u}_2
\\
\lesssim_{\epsilon} u^{-1/p+1/2}u^{11/26+\epsilon}(\sum_k2^{2k}\#\mathcal{E}_k)^{1/p}
\\
\lesssim_{\epsilon}u^{12/13-1/p+\epsilon}(\sum_k2^{2k}\#\mathcal{E}_k)^{1/p}.
\end{multline*}
\end{proof}

Summing over $u\in\mathcal{U}$, we obtain Proposition \ref{mainprop}. Thus to prove Proposition \ref{mainprop} it suffices to prove Lemma \ref{L2lemma}.

\subsection*{Compactly supported multipliers}

Following \cite{hns}, we now show how one may deduce Theorem \ref{mainthm} from Corollary \ref{maincor}. Suppose that $m:\mathbb{R}^3\to\mathbb{C}$ is a bounded, measurable, radial function with compact support inside $\{\xi: 1/2<|\xi|<2\}$. Then $K=\mathcal{F}^{-1}[m]$ is radial, and so we may write $K(\cdot)=\kappa(|\cdot|)$ for some $\kappa:\mathbb{R}\to\mathbb{C}$. Fix a radial Schwartz function $\eta_0$ such that $\wh{\eta_0}(\xi)=1$ on $\text{supp}(m)$ and such that $\eta_0$ has Fourier support in $\{1/4<|\xi|<4\}$. Set $\eta=\mathcal{F}^{-1}[(\wh{\psi})^{-1}\wh{\eta_0}]$. We have $K\ast f=\eta\ast\psi\ast K\ast f$. Let $K_0=K\chi_{\{x:\,|x|\le 1\}}$ and write $K=K_0+K_{\infty}$. Since $\Norm{K_0}_1\lesssim\Norm{K}_p$, it suffices to show that the operator $f\mapsto\eta\ast\psi\ast K_{\infty}\ast f$ is restricted strong type $(p, p)$ with operator norm $\lesssim_p\Norm{K}_p$. Let $E$ be a measurable set of finite measure, and suppose that $|f|\le\chi_E$. 
We may write
\begin{align*}
\psi\ast K_{\infty}\ast f=\int_1^{\infty}\int\psi\ast\sigma_r(\cdot-y)\kappa(r)f(y)\,dy\,dr.
\end{align*}
By Corollary \ref{maincor}, we have
\begin{multline*}
\Norm{\eta\ast\psi\ast K_{\infty}\ast f}_p
\\
\lesssim_p\Norm{\psi\ast K_{\infty}\ast f}_p\lesssim_p\bigg(\int |\kappa(r)|^pr^{2}\,dr\bigg)^{1/p}\bigg(\int|\chi_E(y)|^p\,dy\bigg)^{1/p},
\end{multline*}
which implies the result of Theorem \ref{mainthm}.

\section{Proof of the $L^2$ inequality}
We have shown in the Section \ref{prelimsec} that to prove our main result Theorem \ref{mainthm} it remains to prove Lemma \ref{L2lemma}, and this section is dedicated to the proof of that lemma. The proof will rely on a geometric lemma about sizes of multiple intersections of three-dimensional annuli, which is stated and proved in Section \ref{geomsec}.

\subsection*{Estimates for scalar products}

In order to obtain the desired $L^2$ estimate, we need to examine pairwise interactions of the form $\left<F_{y, r}, F_{y^{\prime}, r^{\prime}}\right>$. By applying Plancherel's Theorem and writing $\wh{F_{y, r}}$ and $\wh{F_{y^{\prime}, r^{\prime}}}$ as expressions involving Bessel functions, the authors of \cite{hns} obtained the following estimates for $|\left<F_{y, r}, F_{y^{\prime}, r^{\prime}}\right>|$.

\begin{lemma}\label{bessel1}
For any choice of $r, r^{\prime}>1$ and $y, y^{\prime}\in\mathbb{R}^3$
\begin{align*}
|\left<F_{y, r}, F_{y^{\prime}, r^{\prime}}\right>|\lesssim\frac{(rr^{\prime})}{(1+|y-y^{\prime}|+|r-r^{\prime}|)}.
\end{align*}
\end{lemma}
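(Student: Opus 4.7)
The plan is to reduce the inner product to a one-dimensional calculation via Plancherel's theorem and the explicit formula for $\widehat{\sigma_r}$ in $\mathbb{R}^3$, then read off the decay from the fact that the sine transform of a Schwartz function is Schwartz. First, Plancherel gives
\[
\langle F_{y,r}, F_{y',r'}\rangle = \int \widehat{\sigma_r}(\xi)\widehat{\sigma_{r'}}(\xi)|\widehat\psi(\xi)|^2\, e^{i\xi\cdot(y'-y)}\,d\xi.
\]
Substituting the classical identity $\widehat{\sigma_r}(\xi) = 4\pi r \sin(r|\xi|)/|\xi|$ makes the whole integrand radial in $\xi$, so writing $s=|y-y'|$ and applying the radial Fourier inversion formula on $\mathbb{R}^3$ reduces everything to the 1D integral
\[
\langle F_{y,r}, F_{y',r'}\rangle = \frac{C\,rr'}{s}\int_0^\infty \sin(r\rho)\sin(r'\rho)\sin(s\rho)\frac{|\widehat\psi(\rho)|^2}{\rho}\,d\rho.
\]

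Next I expand the product of three sines by iterated product-to-sum identities as
\[
\sin(r\rho)\sin(r'\rho)\sin(s\rho) = \tfrac14\bigl[\sin((s+a)\rho)+\sin((s-a)\rho)-\sin((s+b)\rho)-\sin((s-b)\rho)\bigr],
\]
with $a=r-r'$ and $b=r+r'\geq 2$. The key observation is that because $\widehat{\psi}=\widehat{\psi_0}^{\,2}$ vanishes to order $80$ at the origin, $|\widehat\psi(\rho)|^2/\rho$ extends to an odd Schwartz function on $\mathbb{R}$; hence the single-frequency integrals
\[
I(\alpha):=\int_0^\infty \sin(\alpha\rho)\,\frac{|\widehat\psi(\rho)|^2}{\rho}\,d\rho
\]
define an odd Schwartz function of $\alpha$, satisfying $I(0)=0$ and $|I(\alpha)|\lesssim_N (1+|\alpha|)^{-N}$ for every $N$. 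Substituting gives
\[
\langle F_{y,r}, F_{y',r'}\rangle = \frac{C\,rr'}{4s}\bigl[I(s+a)+I(s-a)-I(s+b)-I(s-b)\bigr].
\]

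The $1/s$ prefactor is only an apparent singularity: by oddness of $I$ the bracket vanishes at $s=0$, so the quotient is smooth. For $s\le 1$ I Taylor-expand at the origin to obtain $s^{-1}[\,\cdot\,]=2[I'(a)-I'(b)]+O(s^2)$, which is $O_N((1+|a|)^{-N})$ since $I'$ is Schwartz and $b\geq|a|$; this is much stronger than the target. For $s\geq 1$, I use the Schwartz decay of $I$ directly, together with the mean-value identity $I(s+a)+I(s-a)=I(s+a)-I(a-s)=2sI'(\xi)$ for some $\xi\in(a-s,a+s)$ (and similarly with $b$ in place of $a$); a short subcase analysis on whether $|a|$ is much larger than $s$, comparable to $s$, or much smaller than $s$ then shows that in every regime the resulting bound is dominated by $rr'/(1+s+|a|)$. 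The main obstacle is the bookkeeping in this final step: no single one of the four $I$-terms individually has decay of order $(1+s+|a|)^{-1}$, and the linear decay rate claimed by the lemma emerges only from the way the four contributions combine, together with the geometric constraint $b\geq|a|$ which keeps the resonances at $s=\pm b$ from interfering with those at $s=\pm a$.
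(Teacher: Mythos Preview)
Your argument is correct. The Plancherel reduction, the explicit $d=3$ formula $\widehat{\sigma_r}(\xi)=4\pi r\sin(r|\xi|)/|\xi|$, the product-to-sum expansion, and the identification of $I(\alpha)$ as an odd Schwartz function are all sound; the subcase analysis at the end (MVT when $|a|\gg s$, direct Schwartz bound plus the prefactor $1/s$ when $|a|\lesssim s$) goes through as you describe.

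As for comparison: the paper does not actually prove this lemma. It quotes the estimate from \cite{hns} and remarks that the argument there ``used only the decay and not the oscillation of the Bessel functions.'' Your route is different in spirit from that description: you \emph{do} use the oscillation, via the trigonometric product-to-sum identities, and in fact your intermediate formula
\[
\langle F_{y,r},F_{y',r'}\rangle=\frac{Crr'}{4s}\bigl[I(s+a)+I(s-a)-I(s+b)-I(s-b)\bigr]
\]
already yields the sharper Lemma~\ref{bessel} (with $d=3$), not just Lemma~\ref{bessel1}, since each $I$-term carries arbitrary polynomial decay in $|r\pm r'\pm s|$. What you have written is essentially the $d=3$ specialization of the paper's proof of Lemma~\ref{bessel}: there the authors use the general Bessel asymptotics $B_d(x)\sim\sum(c^\pm e^{\pm ix})x^{-\nu-(d-1)/2}$ to extract the phases $r\pm r'\pm|y-y'|$, whereas you exploit the lucky fact that in three dimensions $B_3$ is \emph{exactly} $\sin(x)/x$, so no asymptotic expansion or error term is needed. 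Your approach is cleaner for $d=3$ but does not generalize; the paper's Bessel-asymptotic argument works uniformly in $d$.
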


The proof of this lemma used only the decay and not the oscillation of the Bessel functions. By exploiting the oscillation of the Bessel functions one may obtain the following improved bounds, which are crucial for our purposes. Since we will use this lemma in three and four dimensions, we state it in terms of dimension $d$, where the functions $F_{y, r}$ are defined analogously in $d$ dimensions as they are defined previously in three dimensions.

\begin{lemma}\label{bessel}
For any choice of $r, r^{\prime}>1$ and $y, y^{\prime}\in\mathbb{R}^d$ and any $N>0$,
\begin{multline*}
|\left<F_{y, r}, F_{y^{\prime}, r^{\prime}}\right>|\le C_N(rr^{\prime})^{\frac{d-1}{2}}(1+|y-y^{\prime}|+|r-r^{\prime}|)^{-\frac{d-1}{2}}
\\
\times\sum_{\pm, \pm}(1+|r\pm r^{\prime}\pm |y-y^{\prime}||)^{-N}.
\end{multline*}
\end{lemma}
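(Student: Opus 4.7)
The plan is to exploit Plancherel together with the complete asymptotic expansion of the Bessel function $J_{(d-2)/2}$ at infinity. Setting $z := y' - y$, Plancherel and polar coordinates give
$$\langle F_{y,r}, F_{y',r'}\rangle = \int_0^\infty \rho^{d-1}|\hat{\psi}(\rho)|^2\hat{\sigma}_r(\rho)\hat{\sigma}_{r'}(\rho)\Bigl[\int_{S^{d-1}}e^{i\rho\omega\cdot z}\,d\omega\Bigr]\,d\rho,$$
and each of the three factors $\hat{\sigma}_r(\rho)$, $\hat{\sigma}_{r'}(\rho)$, and $\int_{S^{d-1}}e^{i\rho\omega\cdot z}\,d\omega$ is a constant multiple of $t^{-(d-2)/2}J_{(d-2)/2}(t)$ evaluated at $t=r\rho$, $t=r'\rho$, and $t=\rho|z|$, respectively. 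Thus the inner product is a triple Bessel integral weighted by $|\hat{\psi}(\rho)|^2$.

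Substituting the standard asymptotic $J_{(d-2)/2}(t) = t^{-1/2}[e^{it}a_+(t) + e^{-it}a_-(t)]$ (valid for $t\gtrsim 1$, with $a_\pm$ symbols of order zero) in each of the three Bessel factors represents the integral, modulo lower-order contributions handled by the Taylor expansion of $J_{(d-2)/2}$ near zero together with the high-order vanishing of $\hat{\psi}$ at the origin, as a sum of eight one-dimensional oscillatory integrals
$$(rr')^{(d-1)/2}|z|^{-(d-1)/2}\sum_{\epsilon\in\{\pm 1\}^3}\int_0^\infty A_\epsilon(\rho)\,e^{i\rho\Phi_\epsilon}\,d\rho,\qquad \Phi_\epsilon = \epsilon_1 r+\epsilon_2 r'+\epsilon_3|z|,$$
where $A_\epsilon(\rho)$ is $\rho^{-(d-1)/2}|\hat{\psi}(\rho)|^2$ times a bounded smooth product of the symbols $a_{\epsilon_j}$ evaluated at $r\rho$, $r'\rho$, $\rho|z|$. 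Since $\hat{\psi}$ vanishes to order $80$ at the origin and is Schwartz at infinity, each $A_\epsilon$ is Schwartz on $[0,\infty)$ with $\rho$-derivatives bounded independently of $r,r',|z|$. Iterating $\tfrac{1}{i\Phi_\epsilon}\partial_\rho e^{i\rho\Phi_\epsilon} = e^{i\rho\Phi_\epsilon}$ then yields the preliminary bound
$$|\langle F_{y,r},F_{y',r'}\rangle|\lesssim_M (rr')^{(d-1)/2}|z|^{-(d-1)/2}\sum_\epsilon(1+|\Phi_\epsilon|)^{-M}\qquad\text{for every } M.$$

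To upgrade $|z|^{-(d-1)/2}$ to the symmetric factor $(1+|z|+|r-r'|)^{-(d-1)/2}$ claimed in the lemma, I argue term by term in $\epsilon$. If $|\Phi_\epsilon|\leq 1$, then $\epsilon_1 r+\epsilon_2 r'+\epsilon_3|z|\approx 0$ forces either $|z|\approx r+r'$ (when $\epsilon_1=\epsilon_2$) or $|z|\approx|r-r'|$ (when $\epsilon_1=-\epsilon_2$), so in either case $|z|\gtrsim |z|+|r-r'|$ and $|z|^{-(d-1)/2}\lesssim (1+|z|+|r-r'|)^{-(d-1)/2}$ automatically. If $|\Phi_\epsilon|\geq 1$ and $|z|\geq 1$, the elementary inequality $(1+|\Phi_\epsilon|)|z|\geq |z|+|\Phi_\epsilon|\gtrsim |z|+|r-r'|$ (the last step following from $|\Phi_\epsilon|\geq ||r\pm r'|-|z||$ together with $r,r'\geq 1$) lets me trade a factor $(1+|\Phi_\epsilon|)^{-(d-1)/2}$ from the oscillatory decay for the desired prefactor, leaving $(1+|\Phi_\epsilon|)^{-(M-(d-1)/2)}$ as the residual oscillatory factor. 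The remaining regime $|z|<1$ is treated by bounding the angular integral by its constant value $|S^{d-1}|$ plus a Taylor remainder in $\rho\omega\cdot z$; this reduces the analysis to the two-Bessel product $\hat{\sigma}_r\hat{\sigma}_{r'}$, whose Bessel expansion produces oscillations $e^{i(\pm r\pm r')\rho}$, and the same $\rho$-integration by parts yields $(1+|r\pm r'|)^{-M}$, from which $(1+|z|+|r-r'|)^{-(d-1)/2}\sim(1+|r-r'|)^{-(d-1)/2}$ is extracted analogously.

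The principal obstacle is precisely this final bookkeeping: the $|z|^{-(d-1)/2}$ factor produced by the angular Bessel asymptotic is intrinsically asymmetric in $|z|$ and $|r-r'|$, whereas the conclusion of the lemma calls for the symmetric combination. Converting one to the other requires a case analysis based on which of the four phases $r\pm r'\pm|z|$ can nearly vanish, and one must choose $M$ large enough so that the stated oscillatory decay $(1+|\Phi_\epsilon|)^{-N}$ still survives after $(d-1)/2$ powers have been absorbed. The boundary regime $|z|\sim 1$, where neither the Bessel asymptotic at infinity nor the Taylor expansion at zero is strictly valid, must be bridged consistently; and the iterated integration by parts must be carried out so as to keep all derivative bounds uniform in the parameters $r$, $r'$, $|z|$.
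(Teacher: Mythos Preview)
Your approach is essentially identical to the paper's: Plancherel turns the inner product into a triple product of Bessel factors $B_d(r\rho)B_d(r'\rho)B_d(|z|\rho)$ against $|a(\rho)|^2\rho^{d-1}$, the large-argument asymptotic expansion of $J_{(d-2)/2}$ produces the oscillatory phases $e^{i\rho(\epsilon_1 r+\epsilon_2 r'+\epsilon_3|z|)}$, and integration by parts in $\rho$ gives the rapid decay in $1+|r\pm r'\pm|z||$.

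The only substantive difference is that the paper's proof in fact stops at the slightly weaker conclusion
\[
|\langle F_{y,r},F_{y',r'}\rangle|\lesssim (rr')^{(d-1)/2}(1+|y-y'|)^{-(d-1)/2}\sum_{\pm,\pm}\eta(r\pm r'\pm|y-y'|)+(1+|r-r'|+|y-y'|)^{-N},
\]
with the prefactor $(1+|y-y'|)^{-(d-1)/2}$ rather than the symmetric $(1+|y-y'|+|r-r'|)^{-(d-1)/2}$ stated in the lemma, and leaves the upgrade implicit. Your case analysis (trading part of the oscillatory decay $(1+|\Phi_\epsilon|)^{-M}$ against the gap between $|z|$ and $|z|+|r-r'|$ when no phase nearly vanishes, and observing $|z|\gtrsim|r-r'|$ when one does) is exactly the argument needed to bridge that gap, so in this respect your write-up is more complete than the paper's.
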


\begin{proof}[Proof of Lemma \ref{bessel}]
We may write $\widehat{\sigma}_1$ in terms of Bessel functions as $\wh{\sigma}_1(\xi)=B_d(|\xi|)$, where
\begin{align*}
B_d(s)=c_ds^{-(d-2)/2}J_{(d-2)/2}
\end{align*}
and $J$ denotes the standard Bessel functions. This implies
\begin{align*}
\wh{\sigma}_r(\xi)=r^{d-1}B_d(r|\xi|).
\end{align*}
Since $\wh{\psi}$ is radial, we may write $\wh{\psi}(\xi)=a(|\xi|)$ for some rapidly decaying function $a$ that vanishes to high order (say $10d$) at the origin. By Plancherel, we have
\begin{multline*}
\left<F_{y, r}, F_{y', r'}\right>=\int\wh{\sigma}_r(\xi)\wh{\sigma}_{r'}(\xi)|\wh{\psi}(\xi)|^2e^{i\left<y'-y, \xi\right>}\,d\xi
\\
=c_d(rr')^{d-1}\int B_d(r\rho)B_d(r'\rho)B_d(|y-y'|\rho)|a(\rho)|^2\rho^{d-1}\,d\rho.
\end{multline*}
We will use the following well-known asymptotic expansion, which holds for $|x|\ge 1$ and any $M$:
\begin{align*}
B_d(x)=\sum_{\nu=0}^M(c_{\nu, k, d}^+e^{ix}+c_{\nu, k, d}^-e^{-ix})x^{-\nu-\frac{d-1}{2}}+x^{-M}E_{M, k, d}(x)
\end{align*}
where for any $k_1\ge 0$,
\begin{align*}
|E_{M, k, d}^{(k_1)}(x)|\le C(M, k, k_1, d).
\end{align*}
Using this expansion together with the higher order of vanishing of $a$ at the origin, one sees that there is a fixed Schwartz function $\eta$ so that we obtain for any $N>0$, 
\begin{multline*}
\left<F_{y, r}, F_{y', r'}\right>\lesssim (rr')^{(d-1)/2}(1+|y-y'|)^{-\frac{d-1}{2}}\sum_{\pm, \pm}\eta(r\pm r' \pm |y-y'|)
\\
+(1+|r-r'|+|y-y'|)^{-N}.
\end{multline*}
In fact, we may take $\eta$ to be the Fourier transform of $|a(\cdot)|^2\rho^{\alpha(d)}$ for some appropriate exponent $\alpha(d)$.
\end{proof}

\subsection*{Another preliminary reduction}
Recall that our goal is to estimate the $L^2$ norm of $G_u=\sum_{k\ge 0}G_{u, k}$. Let $N(u)$ be a sufficiently large number to be chosen later (it will be some harmless constant depending on $u$ that is essentially $O(\log(2+u))$). We split the sum in $k$ as $\sum_{k\le N(u)}G_{u, k}+\sum_{k>N(u)}G_{u, k}$ and apply Cauchy-Schwarz to obtain
\begin{align}\label{goal1}
\Norm{\sum_kG_{u, k}}_2^2\lesssim N(u)\bigg[\sum_k\Norm{G_{u, k}}_2^2+\sum_{k>k^{\prime}>N(u)}|\left<G_{u, k^{\prime}}, G_{u, k}\right>|\bigg].
\end{align}
We may thus separately estimate $\sum_k\Norm{G_{u, k}}_2^2$ and $\sum_{k>k^{\prime}>N(u)}|\left<G_{u, k^{\prime}}, G_{u, k}\right>|$, which divides the proof of the $L^2$ estimate into two cases, the first being the case of ``comparable radii" and the second being the case of ``incomparable radii."

\subsection*{Comparable radii}
We will first estimate $\sum_k\Norm{G_{u, k}}_2^2$. Our goal will be to prove the following lemma.

\begin{lemma}\label{lem1}
For every $\epsilon>0$, 
\begin{align}\label{comp}
\Norm{G_{u, k}}_2^2\lesssim_{\epsilon}2^{2k}(\#\mathcal{E}_k)u^{11/13+\epsilon}.
\end{align}
\end{lemma}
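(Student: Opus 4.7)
The starting point is the expansion
\[
\Norm{G_{u,k}}_2^2 = \sum_{(y,r),(y',r') \in \mathcal{E}_k(u)} c(y,r)\overline{c(y',r')}\langle F_{y,r}, F_{y',r'}\rangle,
\]
together with the refined Bessel estimate of Lemma \ref{bessel} in dimension $d=3$. Since all radii satisfy $r,r'\in[2^k,2^{k+1})$, the prefactor $rr'$ is of size $2^{2k}$, and the rapidly decaying sum $\sum_{\pm,\pm}(1+|r\pm r'\pm|y-y'||)^{-N}$ concentrates the contributions on pairs in one of two \emph{tangency} regimes: internal tangency, where $||y-y'|-|r-r'||\le 1$, and external tangency, where $|(r+r')-|y-y'||\le 1$. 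Pairs satisfying neither condition contribute $O(1)$ each once $N$ is fixed large, and their total is negligible.

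For external tangency $|y-y'|+|r-r'|\sim 2^k$, so each pair is cheaper than in the internal case by a factor $2^{-k}$; the admissible centers $y'$ of a given $(y,r)$ lie in a spherical shell of radius $\sim 2^{k+1}$ and thickness $O(1)$, and since such a shell is covered by $O(1)$ balls of radius $2^k$, the density hypothesis $(y,r)\in\mathcal{E}_k(u)$ together with a trivial count keeps this contribution well below the target bound.

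The main case is internal tangency. I would dyadically decompose $|r-r'|\sim 2^j$ for $0\le j\le k$; each such pair contributes approximately $2^{2k-j}$, so the estimate reduces to a bound on the number $T_j(u,k)$ of ordered pairs $(y,r),(y',r')\in\mathcal{E}_k(u)$ with $|r-r'|\sim 2^j$ and $||y-y'|-|r-r'||\le 1$. This is a tangency-counting problem for thickened $2$-spheres in $\mathbb{R}^3$, and by Cauchy--Schwarz in the choice of a third thickened sphere one can upper-bound $T_j(u,k)$ in terms of the sizes of triple intersections of three-dimensional annuli. The geometric lemma to be proved in Section \ref{geomsec} provides the quantitative bound for these triple intersections, and the product structure $\mathcal{E}=\mathcal{E}_Y\times\mathcal{E}_R$ is precisely what makes the dualization legitimate on a genuine Cartesian family of annuli. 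Plugging the lemma into the $j$-sum, weighted by $2^{-j}$, yields $2^{2k}(\#\mathcal{E}_k)u^{11/13+\epsilon}$, with the $\epsilon$ absorbing the logarithms produced by the dyadic summation.

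The main obstacle is matching the specific exponent $11/13$. One must verify that the triple-intersection bound is uniform across all tangency scales $2^j$ and that the density parameter $u$ enters the bookkeeping in a way that survives the $j$-summation; any slackness in the geometric lemma at extreme scales would spoil the improvement over the linear bound $u^{2/(d-1)}=u$ in (\ref{hnsl22}). It is also delicate that $\mathcal{E}_k(u)$ need not itself be a product set---it is a density-cutoff of the product set $\mathcal{E}_k$---so one must ensure the dualization step only uses the product structure of the ambient $\mathcal{E}_k$, which is what the geometric lemma is designed to accommodate.
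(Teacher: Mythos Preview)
Your proposal identifies the right ingredients---the refined Bessel estimate, the tangency viewpoint, the triple-intersection geometric lemma, and the role of the product structure---but it is missing the two structural ideas that actually produce the exponent $11/13$, and the ``Cauchy--Schwarz in a third sphere'' step does not match what is needed.

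First, the paper does not bound the tangency count by a Cauchy--Schwarz dualization. Instead, after localizing to cubes $Q$ of sidelength $2^{m+5}$, it runs two \emph{different} estimates depending on the number $N_{R,Q}$ of radii present in $Q^{\ast}$. When $N_{R,Q}$ is small, one fixes the pair $(r_1,r_2)$ and counts, for each threshold $2^s$, how many centers $y$ have $\ge 2^s$ other centers in their $t$-annulus; the triple-intersection lemma is applied via a density/pigeonhole argument (find $\gtrsim K/(u2^l)$ well-separated bad centers, then bound triple overlaps) to get the bound $K\lesssim \max(u2^m N_{Y,Q}^{5/3}2^{-2s},\,u2^{m/2}N_{Y,Q}2^{-s})$. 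Summing $K\cdot 2^s$ over $s$ gives estimate (\ref{rand3}). When $N_{R,Q}$ is large, one instead uses that for fixed $(y,r)$ and $y'$ there are at most two tangent values of $r'$, giving the much simpler estimate (\ref{rand4}). Your sketch has no analogue of this dichotomy, and without it the geometric argument alone cannot close.

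Second, and more fundamentally, the exponent $11/13$ arises from a \emph{parameter optimization} you do not set up. The paper introduces a further splitting $G_{u,k}=\sum_\mu G_{u,k,\mu}$ into radial blocks of width $u^a$. Cauchy--Schwarz on the diagonal gives $\sum_\mu\|G_{u,k,\mu}\|_2^2\lesssim u^a 2^{2k}\#\mathcal{E}_k$, while the off-diagonal sum involves only pairs with $|r-r'|\gtrsim u^a$, so the $m$-sum in (\ref{rand3})--(\ref{rand4}) starts at $2^m\ge u^a$ and benefits from the negative powers of $2^m$. Balancing the threshold for $N_{R,Q}$ and then choosing $a=11/13$ equalizes the three competing terms $u^a$, $u^{11/12-a/12}$, $u^{1-a/4}$. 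Your proposal asserts that ``plugging the lemma into the $j$-sum'' gives $u^{11/13+\epsilon}$, but without the $u^a$ splitting and the $N_{R,Q}$ dichotomy there is no mechanism to land on that exponent---the straight tangency count would only recover the trivial bound $u$ from \cite{hns}.
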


Fix $k$ and $u$. We first observe that for $(y, r), (y^{\prime}, r^{\prime})\in\mathcal{E}_k(u)$, we have $\left<F_{y, r}, F_{y^{\prime}, r^{\prime}}\right>=0$ unless $|(y, r)-(y^{\prime}, r^{\prime})|\le 2^{k+5}$. To estimate $\Norm{G_{u, k}}_2^2$ for a fixed $k$, we would thus like to bound
\begin{align*}
\sum_{\substack{(y, r), (y^{\prime}, r^{\prime})\in\mathcal{E}_k(u)\\ 2^{m}\le|(y, r)-(y^{\prime}, r^{\prime})|\le 2^{m+1}}}|\left<F_{y, r}, F_{y^{\prime}, r^{\prime}}\right>|
\end{align*}
for all $0\le m\le k+4$.
\newline
\indent 
Now fix $m\le k+4$. Let $\mathcal{Q}_{u, k, m}$ be a collection of almost disjoint cubes $Q\subset\mathbb{R}^4$ of sidelength $2^{m+5}$ such that $\mathcal{E}_k(u)\subset\bigcup_{Q\in\mathcal{Q}_{u, k, m}}Q$ and so that every $Q$ has nonempty intersection with $\mathcal{E}_k(u)$. Let $Q^{\ast}$ denote the $2^{5}$-dilate of $Q$ and $\mathcal{Q}_{u, k, m}^{\ast}$ the corresponding collection of dilated cubes. Observe that
\begin{multline}\label{rand1}
\Norm{G_{u, k}}_2^2\lesssim\sum_{0\le m\le k+4}\bigg(\sum_{\substack{(y, r), (y^{\prime}, r^{\prime})\in\mathcal{E}_k(u)\\ 2^{m}\le|(y, r)-(y^{\prime}, r^{\prime})|\le 2^{m+1}}}|\left<F_{y, r}, F_{y^{\prime}, r^{\prime}}\right>|
\bigg)\\+\sum_{(y, r)\in\mathcal{E}_k(u)}\Norm{F_{y, r}}_2^2
\\
\lesssim\sum_{0\le m\le k+4}\bigg(\sum_{Q\in\mathcal{Q}_{u, k, m}}\bigg(\sum_{\substack{(y, r), (y^{\prime}, r^{\prime})\in(\mathcal{E}_k(u)\cap Q^{\ast})\\2^{m}\le|(y, r)-(y^{\prime}, r^{\prime})|\le 2^{m+1}}}|\left<F_{y, r}, F_{y^{\prime}, r^{\prime}}\right>|\bigg)\bigg)
\\
+\sum_{(y, r)\in\mathcal{E}_k(u)}\Norm{F_{y, r}}_2^2.
\end{multline} 
\indent 
Now we introduce some terminology that will be useful. For a subset $\mathcal{S}\subset\mathcal{Y}\times\mathcal{R}$, define its $\mathcal{Y}$ and $\mathcal{R}$-projections by 
\begin{align*}
\mathcal{S}_Y=\{y\in\mathcal{Y}:\,\exists\,(y, r)\in\mathcal{S}\}
\end{align*} 
and 
\begin{align*}
\mathcal{S}_R=\{r\in\mathcal{R}:\,\exists\, (y, r)\in\mathcal{S}\}.
\end{align*} 
Also define the \textit{product-extension} $\mathcal{S}^{\times}$ of $\mathcal{S}\subset\mathcal{Y}\times\mathcal{R}$ to be the set $\mathcal{S}_{Y}\times\mathcal{S}_R$. We also define some parameters associated with a fixed $Q\in\mathcal{Q}_{u, k, m}$. Let $N_{R, Q}$ be the cardinality of the $\mathcal{R}$-projection of $\mathcal{E}_k\cap Q^{\ast}$, i.e.
\begin{align*}
N_{R, Q}:=\#((\mathcal{E}_k\cap Q^{\ast})_R)=\#\{r: \exists (y, r)\in\mathcal{E}_k\cap Q^{\ast}\}.
\end{align*}
Similarly define
\begin{align*}
N_{Y, Q}:=\#((\mathcal{E}_k\cap Q^{\ast})_Y)=\#\{y: \exists (y, r)\in\mathcal{E}_k\cap Q^{\ast}\}.
\end{align*}
We also note the following important observation which we will use repeatedly. Using the definition of the sets $\mathcal{E}_k(u)$ and the fact that $\mathcal{E}_k$ has product structure, one may see that if $Q\in\mathcal{Q}_{u, k, m}$ is such that $(\mathcal{E}_k(u)\cap Q^{\ast})$ is nonempty, then 
\begin{align}\label{ess}
|N_{Y, Q}\cdot N_{R, Q}|\lesssim |\mathcal{E}_k\cap Q^{\ast}|\lesssim u2^m.
\end{align}
We remark that the product structure of the sets $\mathcal{E}_k$ is related to the ``tensor product structure" intrinsic to radial Fourier multipliers, mentioned in Section \ref{intro}. Now with (\ref{rand1}) in mind, we will prove the following lemma. 

\begin{lemma}\label{rand2}
 For each $Q\in\mathcal{Q}_{u, k, m}$, we have the estimates
\begin{multline}\label{rand3}
\sum_{\substack{(y, r), (y^{\prime}, r^{\prime})\in(\mathcal{E}_k(u)\cap Q^{\ast})\\2^{m}\le|(y, r)-(y^{\prime}, r^{\prime})|\le 2^{m+1}}}|\left<F_{y, r}, F_{y^{\prime}, r^{\prime}}\right>|
\\
\lesssim N_{R, Q}(\#(\mathcal{E}_k\cap Q^{\ast}))2^{2(k-m/2)}(m\log(u))\max(u^{5/6}2^{5m/6}, u2^{m/2})
\end{multline}
and
\begin{multline}\label{rand4}
\sum_{\substack{(y, r), (y^{\prime}, r^{\prime})\in(\mathcal{E}_k(u)\cap Q^{\ast})\\2^{m}\le|(y, r)-(y^{\prime}, r^{\prime})|\le 2^{m+1}}}|\left<F_{y, r}, F_{y^{\prime}, r^{\prime}}\right>|
\\
\lesssim 2^{2(k-m/2)}(\#(\mathcal{E}_k\cap Q^{\ast}))u2^m(N_{R, Q})^{-1}.
\end{multline}
\end{lemma}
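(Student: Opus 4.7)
The plan is to start from Lemma \ref{bessel} and reduce everything to counting near-tangent quadruples. In three dimensions, with $r, r' \approx 2^k$ and $|(y,r)-(y',r')|\approx 2^m$, Lemma \ref{bessel} gives the pointwise bound
$$|\langle F_{y,r}, F_{y',r'}\rangle| \lesssim 2^{2(k-m/2)}\sum_{\pm,\pm}(1+|r\pm r'\pm|y-y'||)^{-N}.$$
The rapid decay factor forces the main contribution to come from ``near-tangent'' configurations, namely quadruples for which one of the defects $|r\pm r'\pm|y-y'||$ is $O(1)$. Both \eqref{rand3} and \eqref{rand4} thereby reduce to upper bounds on the number of such near-tangent quadruples in $\mathcal{E}_k(u)\cap Q^*$, times the uniform bound $2^{2(k-m/2)}$. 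The crucial structural input is that, since $\mathcal{E}_k$ has product structure and $Q^*$ is itself a product, $\mathcal{E}_k\cap Q^* = A\times B$ with $|A| = N_{Y, Q}$, $|B| = N_{R, Q}$, and $N_{Y, Q} N_{R, Q}\lesssim u 2^m$ by \eqref{ess}.

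The estimate \eqref{rand4} is the easier of the two and requires no geometric input. Fixing $(y', r')\in A\times B$ and $y\in A$, the tangency condition $|y-y'| = |r\pm r'|+O(1)$ constrains $r$ to one of two intervals of length $O(1)$, and since $B$ is $1$-separated only $O(1)$ values of $r\in B$ are admissible. This produces at most $\#(\mathcal{E}_k\cap Q^*)\cdot N_{Y, Q}$ near-tangent quadruples, and using $N_{Y, Q}\lesssim u2^m/N_{R, Q}$ together with the per-pair bound $2^{2(k-m/2)}$ gives \eqref{rand4} directly.

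For \eqref{rand3} I would organize the sum by fixing the radius pair $(r,r')\in B\times B$ and counting near-tangent $(y, y')\in A\times A$. After a dyadic pigeonholing in $|r\pm r'|$ (which is responsible for the factor $m\log(u)$), this reduces to counting pairs in $A$ whose distance to each other lies in a prescribed interval of length $O(1)$ around some value $\lesssim 2^m$. At this point I would invoke the geometric Lemma \ref{geomlemma} on multiple intersections of three-dimensional annuli proved in Section \ref{geomsec}; combined with the density constraint $N_{Y, Q} N_{R, Q}\lesssim u2^m$, it should yield two alternative bounds on the pair count, one of shape $u^{5/6}2^{5m/6}$ arising from an $L^3$-type multi-intersection estimate for triples of points on a common shell, and one of shape $u\,2^{m/2}$ arising from a more direct combinatorial bound obtained via Cauchy-Schwarz and the density estimate. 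Taking the maximum and summing over the $N_{R, Q}^2$ radius pairs produces the factor $N_{R, Q}\cdot\#(\mathcal{E}_k\cap Q^*) = N_{Y, Q} N_{R, Q}^2$ in front of \eqref{rand3}. The main obstacle will be this geometric step: extracting from Lemma \ref{geomlemma} the two bounds with the correct exponents, identifying the regime (balanced at $u\approx 4^m$) in which each is sharp, and correctly tracking how the product structure of $A\times B$ interacts with the annular intersections. The remaining bookkeeping over dyadic scales of $|y-y'|$ and the signs $\pm,\pm$ is routine.
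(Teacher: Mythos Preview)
Your treatment of \eqref{rand4} is correct and matches the paper's argument exactly.

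For \eqref{rand3} your outline has the right ingredients but a genuine gap and two misattributions. First, the factor $m\log(u)$ does \emph{not} arise from a dyadic pigeonholing in $|r\pm r'|$: once $r_1,r_2$ are fixed there is a single relevant value $t=|r_1\pm r_2|$ (or $r_1+r_2$), and the estimate is proved uniformly in $t$. The logarithm appears only at the end, as $\log(N_{Y,Q})\lesssim m\log(u)$, from a dyadic sum over a richness parameter $s$ described below. Second, the branch $u\,2^{m/2}$ does not come from Cauchy-Schwarz; both branches of the maximum in \eqref{rand3} are produced by the same density-plus-geometry mechanism, with $u\,2^{m/2}$ arising from the threshold $l\ge j/2$ in Lemma~\ref{geomlemma}.

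The missing step is the following. For fixed $r_1,r_2$ and $t\le 2^{m+10}$, let $K(s,t)$ be the number of $y\in A=(\mathcal{E}_k\cap Q^*)_Y$ whose $t$-annulus of thickness $O(1)$ contains at least $2^s$ points of $A$; the pair count you need is essentially $\sum_s K(s,t)\,2^s$. To bound $K(s,t)$, the density hypothesis on $\mathcal{E}_k(u)$ allows you to extract $\gtrsim K(s,t)/(u2^l)$ of these rich centers that are $2^l$-separated, for any scale $2^l$. Lemma~\ref{geomlemma} then bounds each triple intersection of their $t$-annuli by $\lesssim 2^{3(j-l)}$ provided $l\ge j/2+O(1)$; if this triple overlap is $\ll 2^s$, the $2^s$-rich annuli are essentially disjoint on $A$, forcing $N_{Y,Q}\gtrsim K(s,t)\,2^s/(u2^l)$. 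Optimizing $l$ against the constraint $l\ge j/2$ yields
\[
K(s,t)\lesssim \max\bigl(u\,2^m N_{Y,Q}^{5/3}2^{-2s},\ u\,2^{m/2}N_{Y,Q}\,2^{-s}\bigr),
\]
and summing $K(s,t)\,2^s$ over $s$ (against the trivial bound $K\le N_{Y,Q}$) gives the two terms in \eqref{rand3} together with the $\log(N_{Y,Q})$ loss. Without this rich-point/separation argument there is no direct path from Lemma~\ref{geomlemma} to the claimed exponents, so this is the piece you still need to supply.
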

We will then choose the better estimate from Lemma \ref{rand2} depending on $N_{R, Q}$ and sum over all $Q\in\mathcal{Q}_{u, k, m}$ and then over all $m\ge u^a$ where $a$ is a number to be chosen later. We will then use other methods to deal with the case $m\le u^a$, from which we will then obtain Lemma \ref{lem1}.

\begin{proof}[Proof of Lemma \ref{rand2}]
We will first prove (\ref{rand3}). By incurring a factor of $N_{R, Q}^2$, to estimate $\sum_{\substack{(y, r), (y^{\prime}, r^{\prime})\in(\mathcal{E}_k(u)\cap Q^{\ast})\\2^{m}\le|(y, r)-(y^{\prime}, r^{\prime})|\le 2^{m+1}}}|\left<F_{y, r}, F_{y^{\prime}, r^{\prime}}\right>|$ it suffices to estimate for a fixed pair $r_1, r_2$
\begin{align*}
\sum_{\substack{(y, r_1), (y^{\prime}, r_2)\in(\mathcal{E}_k(u)^{\times}\cap Q^{\ast})\\2^{m}\le|(y, r_1)-(y^{\prime}, r_2)|\le 2^{m+1}}}|\left<F_{y, r_1}, F_{y^{\prime}, r_2}\right>|,
\end{align*}
i.e. to restrict $(y, r)$ and $(y^{\prime}, r^{\prime})$ to lie in fixed rows of the product-extension of $\mathcal{E}_{k}(u)\cap Q^{\ast}$. (Our estimates will not depend on the particular choice of $r_1$ and $r_2$.)
\newline
\indent
Now, referring to the estimate in Lemma \ref{bessel}, we see that for a fixed $y, r_1, r_2$ we have that $|\left<F_{y, r_1}, F_{y^{\prime}, r_2}\right>|$ decays rapidly as $y^{\prime}$ moves away from the set $\{y^{\prime}:\,|y-y^{\prime}|=|r_1-r_2|\text{ or }|y-y^{\prime}|=r_1+r_2\}$, which is contained in a union of two annuli of thickness $2$ and radii $|r_1-r_2|$ and $r_1+r_2$ centered at $y$. 
\newline
\indent
Let $s\ge 0$, fix $t\le 2^{m+10}$, and define $K_k(Q, s, t)$ to be the number of points $y\in(\mathcal{E}_k(u)\cap Q^{\ast})_Y$ such that there are $\ge 2^s$ many points $y^{\prime}\in(\mathcal{E}_k\cap Q^{\ast})_Y$ such that $y^{\prime}$ lies in the annulus of inner radius $t$ and thickness $3$ centered at $y$. That is, define
\begin{align*}
K_k(Q, s, t):=\#\{y\in(\mathcal{E}_k(u)\cap Q^{\ast})_Y: \text{there exists at least }2^s\text{ many points }
\\
y^{\prime}\in(\mathcal{E}_k\cap Q^{\ast})_Y\text{ such that }||y^{\prime}-y|-(t+1.5)|\le 1.5\}.
\end{align*}
In view of the observation in the previous paragraph, for a given $s$ and a fixed number $t\le 2^{m+10}$, we would like to prove a bound on $K_k(Q, s, t)$. Our bound will depend on $s$ and $m$ but be independent of the choice of $t\le 2^{m+10}$. For this reason, we define the quantity
\begin{align*}
K_k^{\ast}(Q, s):=\max_{0 \le t\le 2^{m+10}}K_k(Q, s, t),
\end{align*}
and we will see that $K_k^{\ast}(Q, s)$ satisfies the same bound we prove for $K_k(Q, s, t)$. Our bound for $K_k(Q, s, t)$ will decay as $2^s$ gets larger and closer to $N_{Y, Q}$; in other words, ``most" of the points $y$ in $(\mathcal{E}_k(u)\cap Q^{\ast})_Y$ cannot have a large proportion of other points in $(\mathcal{E}_k\cap Q^{\ast})_Y$ lie in the annulus of inner radius $t$ and thickness $3$ centered at $y$. If we take $t=|r_1-r_2|$ or $t=r_1+r_2$, we see that this implies that ``most" of the $F_{y, r}$ with $(y, r)\in (\mathcal{E}_k(u)\cap Q^{\ast})_Y\times\{r_1\}$ do not ``interact badly" (where by badly we mean to the worst possible extent allowed by Lemma \ref{bessel}, i.e. internal tangencies of annuli) with most of the other $F_{y^{\prime}, r^{\prime}}$ where $(y^{\prime}, r^{\prime})\in(\mathcal{E}_k\cap Q^{\ast})_Y\times\{r_2\}$. This will allow us to obtain (\ref{rand3}), which is a good estimate in the case that $N_{R, Q}$ is small.
\newline
\indent
More precisely, we will prove
\begin{align}\label{Kest}
K_k^{\ast}(Q, s)\lesssim\max[u2^mN_{Y, Q}^{5/3}2^{-2s}, u2^{m/2}N_{Y, Q}2^{-s}].
\end{align}
Combining this with the trivial bound $K_k^{\ast}(Q, s)\lesssim N_{Y, Q}$ yields
\begin{align}\label{Kest2}
K_k^{\ast}(Q, s)\lesssim\max[\min(u2^mN_{Y, Q}^{5/3}2^{-2s}, N_{Y, Q}), \min(u2^{m/2}N_{Y, Q}2^{-s}, N_{Y, Q})].
\end{align}
Note that (\ref{Kest}) gives decay in the number of points $K_k^{\ast}(Q, s)$ (i.e. $K_k^{\ast}(Q, s)\ll N_{Y, Q}$) if we have that
\begin{enumerate}
\item$N_{Y, Q}^{5/3}2^{-2s}u2^m\ll N_{Y, Q}$, i.e. if $2^s\gg N_{Y, Q}^{1/3}u^{1/2}2^{m/2}$, and also
\item  $N_{Y, Q}2^{-s}u2^{m/2}\ll N_{Y, Q}$, i.e. if $2^s\gg u2^{m/2}$.
\end{enumerate}
Using Lemma \ref{bessel}, we may bound
\begin{multline}\label{b1}
\sum_{\substack{(y, r), (y^{\prime}, r^{\prime})\in(\mathcal{E}_k(u)\cap Q^{\ast})\\2^{m}\le|(y, r)-(y^{\prime}, r^{\prime})|\le 2^{m+1}}}|\left<F_{y, r}, F_{y^{\prime}, r^{\prime}}\right>|
\\
\lesssim \sum_{r_1, r_2\in (\mathcal{E}_k(u)\cap Q^{\ast})_R}\bigg(\sum_{\substack{y, y^{\prime}\in(\mathcal{E}_k(u)\cap Q^{\ast})_Y\\2^{m}\le|(y, r_1)-(y^{\prime}, r_2)|\le 2^{m+1}}}|\left<F_{y, r_1}, F_{y^{\prime}, r_2}\right>|\bigg)
\\
\lesssim 2^{2(k-m/2)}\sum_{r_1, r_2\in (\mathcal{E}_k(u)\cap Q^{\ast})_R}\bigg(\sum_{0\le a\le m+10}\bigg(\sum_{y\in(\mathcal{E}_k(u)\cap Q^{\ast})_Y}
\\
\sum_{\substack{y^{\prime}\in(\mathcal{E}_k(u)\cap Q^{\ast})_Y:\\min_{\pm, \pm}(1+|r_1\pm r_2\pm |y-y^{\prime}||)\approx 2^a}}
2^{-aN}
\bigg)\bigg)
\\
\lesssim 2^{2(k-m/2)}\sum_{r_1, r_2\in (\mathcal{E}_k(u)\cap Q^{\ast})_R}\bigg(\sum_{0\le a\le m+10}2^{-aN}
\\
\times\bigg(\sum_{s\ge 0: 2^s\le 2N_{Y, Q}}K_k^{\ast}(Q, s)2^s\bigg)\bigg)
\\
\lesssim 2^{2(k-m/2)}N_{Q, R}^2\sum_{s\ge 0: 2^s\le 2N_{Y, Q}}K_k^{\ast}(Q, s)2^s.
\end{multline}
Assuming (\ref{Kest2}) holds, we have
\begin{multline}\label{longy1}
\sum_{\substack{(y, r), (y^{\prime}, r^{\prime})\in(\mathcal{E}_k(u)\cap Q^{\ast})\\2^{m}\le|(y, r)-(y^{\prime}, r^{\prime})|\le 2^{m+1}}}|\left<F_{y, r}, F_{y^{\prime}, r^{\prime}}\right>|
\\
\lesssim N_{R, Q}^22^{2(k-m/2)}\sum_{s\ge 0: 2^s\lesssim N_{Y, Q}}\max[\min(u2^mN_{Y, Q}^{5/3}2^{-s}, N_{Y, Q}2^s), 
\\
\min(u2^{m/2}N_{Y, Q}, N_{Y, Q}2^s)]
\\
\lesssim N_{R, Q}^22^{2(k-m/2)}\max\bigg\{\sum_{s\ge 0: 2^s\lesssim N_{Y, Q}}\min(u2^mN_{Y, Q}^{5/3}2^{-s}, N_{Y, Q}2^s); 
\\
\sum_{s\ge 0: 2^s\lesssim N_{Y, Q}}\min(u2^{m/2}N_{Y, Q}, N_{Y, Q}2^s)\bigg\}
\end{multline}
Now, note that $u2^mN_{Y, Q}^{5/3}2^{-s}\ge N_{Y, Q}2^s$ if and only if $2^s\le u^{1/2}2^{m/2}N_{Y, Q}^{1/3}$. Thus choosing the better estimate in the term $\min(u2^mN_{Y, Q}^{5/3}2^{-s}, N_{Y, Q}2^s)$ depending on $s$ yields that 
\begin{align*}
\sum_{s\ge 0: 2^s\lesssim N_{Y, Q}}\min(u2^mN_{Y, Q}^{5/3}2^{-s}, N_{Y, Q}2^s)\lesssim u^{1/2}2^{m/2}N_{Y, Q}^{4/3}.
\end{align*}
Note that $u2^{m/2}N_{Y, Q}\ge N_{Y, Q}2^s$ if and only if $2^s\le u2^{m/2}$. Thus choosing the better estimate in the term $\min(u2^{m/2}N_{Y, Q}, N_{Y, Q}2^s)$ depending on $s$ yields that
\begin{align*}
\sum_{s\ge 0: 2^s\lesssim N_{Y, Q}}\min(u2^{m/2}N_{Y, Q}, N_{Y, Q}2^s)\lesssim \log(N_{Y, Q})N_{Y, Q}\,u2^{m/2}.
\end{align*}
It follows that the left hand side of (\ref{longy1}) is bounded by
\begin{multline}
N_{R, Q}^22^{2(k-m/2)}N_{Y, Q}\log(N_{Y, Q})\max(N_{Y, Q}^{1/3}u^{1/2}2^{m/2}, u2^{m/2})
\\
\lesssim N_{R, Q}^22^{2(k-m/2)}N_{Y, Q}(m\log(u))\max(u^{5/6}2^{5m/6}, u2^{m/2})
\\
\lesssim N_{R, Q}(\#(\mathcal{E}_k\cap Q^{\ast}))2^{2(k-m/2)}(m\log(u))\max(u^{5/6}2^{5m/6}, u2^{m/2}),
\end{multline}
which proves (\ref{rand3}). This will be a good estimate when $N_{R, Q}$ is small. 
\newline
\indent
Thus to prove (\ref{rand3}) it remains to prove (\ref{Kest}). We will in fact prove (\ref{Kest}) with $K_k^{\ast}(Q, s)$ replaced by $K_k(Q, s, t)$, uniformly in $t\le 2^{m+10}$. Fix $t\le 2^{m+10}$ and let $j=\lceil{\log_2(t)}\rceil$ and cover $(\mathcal{E}_k(u)\cap Q^{\ast})_Y$ by $\lesssim 2^{3(m-j)}$ many $3$-dimensional almost disjoint balls of radius $2^{j+5}$; denote this collection of balls as $\mathfrak{B}=\{B_i\}$. For each $i$, we define a collection of ``special" points $A_i(Q, s, t)$ to be the set of all points $y\in(\mathcal{E}_k(u)\cap Q^{\ast})_Y\cap B_i$ such that there are $\ge 2^s$ many points $y^{\prime}\in(\mathcal{E}_k\cap Q^{\ast})_Y$ such that $y^{\prime}$ lies in the annulus of radius $t$ and thickness $3$ centered at $y$. That is, we define
\begin{align*}
A_{k, i}(Q, s, t):=\{y\in(\mathcal{E}_k(u)\cap Q^{\ast})_Y\cap B_i: \text{ there exist at least }2^s\text{ many points }
\\
y^{\prime}\in(\mathcal{E}_k\cap Q^{\ast})_Y\text{ such that }||y^{\prime}-y|-(t+1.5)|\le 1.5\}.
\end{align*}
Let $K_{k, i}(Q, s, t)$ denote the cardinality of $A_{k, i}(Q, s, t)$. Now cover each $B_i$ with $\lesssim 2^{3(j-l)}$ many almost disjoint $3$-dimensional balls $B_{i, j}$ of radius $2^l$ for some $l\le j$. Each such ball contains at most $u2^l$ many points of $A_{k, i}(Q, s, t)$, so for a fixed $i$ there must be at least $\gtrsim K_{k, i}(Q, s, t)(u2^l)^{-1}$ many balls $B_{i, j}$ that contain at least one point in $A_{k, i}(Q, s, t)$. Thus there must be at least $\gtrsim K_{k, i}(Q, s, t)(u2^l)^{-1}$ many such points in $B_i\cap A_{k, i}(Q, s, t)$ spaced apart by $\gtrsim 2^l$; call this set $D_{k, i}(Q, s, t)$. But by Lemma \ref{geomlemma}, which we prove later in Section \ref{geomsec} of the paper, the size of three-fold intersections of annuli of radius $t\approx 2^j$ and thickness $3$ spaced apart by $\approx 2^l$ with centers lying a ball of radius $2^{j-5}$ is bounded above by $2^{3(j-l)}$ provided that $l\ge j/2+20$. 
\newline
\indent
It follows that if $l\ge j/2+20$, then for each of these $\approx K_{k, i}(Q, s, t)(u2^l)^{-1}$ many points $p\in D_{k, i}(Q, s, t)$, there can be at most 
\begin{align*}
\lesssim K_{k, i}(Q, s, t)^2(u2^l)^{-2}2^{3(j-l)}
\end{align*}
points lying inside the $t$-annulus centered at $p$ that are simultaneously contained in at least two other different $t$-annuli centered at points in $D_{k, i}(Q, s, t)$. This implies that if $N_{Y, Q, i}$ denotes the cardinality of $(\mathcal{E}_k\cap Q^{\ast})_Y\cap B_i^{\ast}$ where $B_i^{\ast}=10B_i$, then we have
\begin{align}\label{c1}
N_{Y, Q, i}\gtrsim K_{k, i}(Q, s, t)(u2^l)^{-1}2^s,
\end{align}
which is essentially $2^s$ times the number of points in $D_{k, i}(Q, s, t)$, provided that $2^s$ is much bigger than the total number of points lying inside a $t$-annulus centered at $p$ that are simultaneously contained in at least two other different $t$-annuli centered at points in $D_{k, i}(Q, s, t)$, i.e. provided that
\begin{align}\label{c2}
K_{k, i}(Q, s, t)^2(u2^l)^{-2}2^{3(j-l)}\ll 2^s
\end{align}
and 
\begin{align*}
l\ge j/2+20.
\end{align*} 
Solving for $2^l$ in (\ref{c2}) yields
\begin{align}\label{c3}
2^l\gg K_{k, i}(Q, s, t)^{2/5}2^{3j/5}u^{-2/5}2^{-s/5}.
\end{align}
Thus choosing a minimal $l$ such that 
\begin{align*}
2^l\gg\max(K_{k, i}(Q, s, t)^{2/5}2^{3j/5}u^{-2/5}2^{-s/5}, 2^{j/2})
\end{align*}
for a sufficiently large implied constant and substituting into (\ref{c1}) yields
\begin{align}\label{c4}
K_{k, i}(Q, s, t)\lesssim\max[u2^mN_{Y, Q, i}^{5/3}2^{-2s}, u2^{m/2}N_{Y, Q, i}2^{-s}],
\end{align}
and summing over all $i$ and using the almost-disjointness of the $B_i^{\ast}$ gives
\begin{align}\label{c6}
K_{k}(Q, s, t)\lesssim\max[u2^mN_{Y, Q}^{5/3}2^{-2s}, u2^{m/2}N_{Y, Q}2^{-s}].
\end{align}
Taking the maximum over all $0\le t\le 2^{m+10}$ proves (\ref{Kest}) and hence also (\ref{rand3}).
\newline
\indent
It remains to prove (\ref{rand4}), which will be a good estimate in the case that $N_{R, Q}$ is large. For a fixed $(y, r)\in Q^{\ast}$ and a fixed $y^{\prime}\in(\mathcal{E}_k(u)\cap Q^{\ast})_Y$, there are at most two values of $r^{\prime}$ away from which $\left<F_{y, r}, F_{y^{\prime}, r^{\prime}}\right>$ decays rapidly. Thus using Lemma \ref{bessel} we may estimate
\begin{multline}\label{b2}
\sum_{\substack{(y, r), (y^{\prime}, r^{\prime})\in(\mathcal{E}_k(u)\cap Q^{\ast})\\2^m\le |(y, r)-(y^{\prime}, r^{\prime})|\le 2^{m+1}}}|\left<F_{y, r}, F_{y^{\prime}, r^{\prime}}\right>|
\\
\lesssim\sum_{0\le a\le m+10}\bigg(\sum_{(y, r)\in(\mathcal{E}_k(u)\cap Q^{\ast})}\bigg(\sum_{y^{\prime}\in(\mathcal{E}_k(u)\cap Q^{\ast})_Y}
\\
\bigg(\sum_{\substack{r^{\prime}\in(\mathcal{E}_k(u)\cap Q^{\ast})_R\\2^m\le |(y, r)-(y^{\prime}, r^{\prime})|\le 2^{m+1}\\ \min_{\pm, \pm}(1+|r\pm r^{\prime}\pm |y-y^{\prime}||)\approx 2^a}}2^{-Na}2^{2(k-m/2)}\bigg)\bigg)\bigg)
\\
\lesssim 2^{2(k-m/2)}(\#(\mathcal{E}_k(u)\cap Q^{\ast}))N_{Y, Q}
\\
\lesssim 2^{2(k-m/2)}(\#(\mathcal{E}_k(u)\cap Q^{\ast}))u2^m(N_{Q, R})^{-1},
\end{multline}
and the proof of (\ref{rand4}) is complete.
\end{proof}

We will now use Lemma \ref{rand2} to prove Lemma \ref{lem1}.

\begin{proof}[Proof of Lemma \ref{lem1}]
Fix an $a>0$ to be determined later. As in \cite{hns}, we split $G_k=\sum_{\mu}G_{k, \mu}$, where for each positive integer $\mu$ we set
\begin{align*}
I_{k, \mu}=[2^k+(\mu-1)u^a, 2^k+\mu u^a),
\end{align*}
\begin{align*}
\mathcal{E}_{k, \mu}=\mathcal{E}_k\cap (\mathcal{Y}\times I_{k, \mu}),
\end{align*}
\begin{align*}
G_{k, \mu}=\sum_{(y, r)\in\mathcal{E}_{k, \mu}}c(y, r)F_{y, r},
\end{align*}
and
\begin{align*}
G_{k, \mu, r}=\sum_{y:\,(y, r)\in\mathcal{E}_k}c(y, r)F_{y, r}.
\end{align*}
We have
\begin{align}\label{split2}
\Norm{G_k}_2^2\lesssim\Norm{\sum_{\mu}G_{k, \mu}}_2^2\lesssim\sum_{\mu}\Norm{G_{k, \mu}}_2^2+\sum_{\mu^{\prime}>\mu+10}|\left<G_{k, \mu^{\prime}}, G_{k, \mu}\right>|.
\end{align}
By Cauchy-Schwarz,
\begin{align*}
\Norm{G_{k, \mu}}_2^2\lesssim u^a\sum_{r\in\mathcal{I}_{k, \mu}\cap\mathcal{R}}\Norm{G_{k, \mu, r}}_2^2.
\end{align*}
Write
\begin{align*}
G_{k, \mu, r}=\bigg(\sum_{y:\,(y, r)\in\mathcal{E}_{k, \mu}}c(y, r)\psi_0(\cdot-y)\bigg)\ast(\sigma_r\ast\psi_0).
\end{align*}
By the Fourier decay of $\sigma_r$ and the order of vanishing of $\psi_0$ at the origin, we have
\begin{align*}
\Norm{\widehat{\sigma_r}\widehat{\psi}_0}_{\infty}\lesssim r.
\end{align*}
Since the square of the $L^2$ norm of $\sum_{y:\,(y, r)\in\mathcal{E}_{k, \mu}}c(y, r)\psi_0(\cdot -y)$ is $\lesssim\#\{y\in\mathcal{Y}:\,(y, r)\in\mathcal{E}_{k, \mu}\}$, we have
\begin{align}\label{aest}
\sum_{\mu}\Norm{G_{k, \mu}}_2^2\lesssim u^a\sum_{\mu}\sum_{r\in\mathcal{I}_{k, \mu}\cap\mathcal{R}}\Norm{G_{k, \mu, r}}_2^2\lesssim u^a2^{2k}\#\mathcal{E}_k.
\end{align}
By (\ref{split2}), it remains to estimate $\sum_{\mu^{\prime}>\mu+10}|\left<G_{k, \mu^{\prime}}, G_{k, \mu}\right>|$.
\newline
\indent
Fix $\epsilon>0$. We will use (\ref{rand3}) when $N_{R, Q}\le 2^{m\epsilon}\min(u^{1/12+a/12}, u^{a/4})$ and (\ref{rand4}) when $N_{R, Q}\ge 2^{m\epsilon}\min(u^{1/12+a/12}, u^{a/4})$. We write 
\begin{multline*}
\sum_{\substack{(y, r), (y^{\prime}, r^{\prime})\in\mathcal{E}_k(u)\\|(y, r)-(y^{\prime}, r^{\prime})|\ge u^a}}|\left<F_{y, r}, F_{y^{\prime}, r^{\prime}}\right>|
\\
\lesssim\sum_{m:\,2^m\ge u^a}\bigg(\sum_{\substack{(y, r), (y^{\prime}, r^{\prime})\in\mathcal{E}_k(u)\\|(y, r)-(y^{\prime}, r^{\prime})|\approx 2^m}}\bigg(\sum_{\substack{Q\in\mathcal{Q}_{u, k, m}\\ N_{R, Q}\le 2^{m\epsilon}\min(u^{1/12+a/12}, u^{1/4})}}|\left<F_{y, r}, F_{y^{\prime}, r^{\prime}}\right>|
\\
+\sum_{\substack{Q\in\mathcal{Q}_{u, k, m}\\ N_{R, Q}\ge 2^{m\epsilon}\min(u^{1/12+a/12}, u^{1/4})}}|\left<F_{y, r}, F_{y^{\prime}, r^{\prime}}\right>|\bigg)\bigg).
\end{multline*}
One sees that
\begin{align}\label{onetwo}
\sum_{\substack{(y, r), (y^{\prime}, r^{\prime})\in\mathcal{E}_k(u)\\|(y, r)-(y^{\prime}, r^{\prime})|\ge u^a}}|\left<F_{y, r}, F_{y^{\prime}, r^{\prime}}\right>|\lesssim I+II,
\end{align}
where using (\ref{rand3}) when $N_{R, Q}\le 2^{m\epsilon}\min(u^{1/12+a/12}, u^{a/4})$ and summing over all $Q\in\mathcal{Q}_{u, k, m}$ and over all $m$ such that $2^m\ge u^a$ we have
\begin{multline}\label{b3}
I:= 2^{2k}(\#\mathcal{E}_k)\log(u)
\\
\times\sum_{m: 2^m\ge u^a} u^{\epsilon}\max\bigg\{2^{-m/6+\epsilon}\min(u^{11/12+a/12}, u^{5/6+a/4}), \\2^{-m/2+\epsilon}\min(u^{13/12+a/12}, u^{1+a/4})\bigg\}
\\
\lesssim
 2^{2k}(\#\mathcal{E}_k)u^{\epsilon}\max\bigg\{u^{-a/6}\min(u^{11/12+a/12}, u^{5/6+a/4}),
 \\
 u^{-a/2}\min(u^{13/12+a/12}, u^{1+a/4})\bigg\},
\end{multline}
and using (\ref{rand4}) when $N_{R, Q}\ge 2^{m\epsilon}\min(u^{1/12+a/12}, u^{a/4})$ and summing over all $Q$ and over all $m$ such that $2^m\ge u^a$ we have
\begin{multline}\label{b4}
II:=2^{2k}(\#\mathcal{E}_k)u^{\epsilon}\sum_{m: 2^m\ge u^a}2^{-m\epsilon}\max(u^{11/12-a/12}, u^{1-a/4})
\\
\lesssim_{\epsilon}2^{2k}(\#\mathcal{E}_k)u^{\epsilon}\max(u^{11/12-a/12}, u^{1-a/4}).
\end{multline}
Combining (\ref{split2}), (\ref{aest}) and (\ref{onetwo}), we thus have the estimate
\begin{multline*}
\Norm{G_{u, k}}_2^2\lesssim_{\epsilon}2^{2k}(\#\mathcal{E}_k)\bigg[u^a+u^{\epsilon}\max\bigg\{u^{-a/6}\min(u^{11/12+a/12}, u^{5/6+a/4}),
 \\
 u^{-a/2}\min(u^{13/12+a/12}, u^{1+a/4})\bigg\}+u^{\epsilon}\max(u^{11/12-a/12}, u^{1-a/4})\bigg].
\end{multline*}
Choose $a=11/13$ to obtain
\begin{align*}
\Norm{G_{u, k}}_2^2\lesssim_{\epsilon}2^{2k}(\#\mathcal{E}_k)u^{11/13+\epsilon}
\end{align*}
for every $\epsilon>0$, which is (\ref{comp}).
\end{proof}

\subsection*{Incomparable radii}
We now want to estimate $\sum_{k>k^{\prime}>N(u)}|\left<G_{u, k^{\prime}}, G_{u, k}\right>|$. Our estimate will be much better than in the comparable radii case. In view of (\ref{goal1}), we will in fact prove the following.
\begin{lemma}\label{inc}
Let $\epsilon>0$. For the choice $N(u)=100\epsilon^{-1}\log_2(2+u)$, we have
\begin{align}\label{i5}
\sum_{k>k^{\prime}>N(u)}|\left<G_{u, k^{\prime}}, G_{u, k}\right>|\lesssim_{\epsilon}\sum_k2^{2k}\#\mathcal{E}_k.
\end{align}
\end{lemma}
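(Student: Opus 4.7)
The strategy is to exploit Lemma~\ref{bessel} together with the key observation that for $k > k' > N(u)$, the radii satisfy $|r - r'| \gtrsim 2^k$. Substituting into Lemma~\ref{bessel} yields, in $d = 3$,
$$|\langle F_{y, r}, F_{y', r'}\rangle| \lesssim_N 2^{k'}\sum_{\pm,\pm}(1 + |r \pm r' \pm |y-y'||)^{-N}.$$
Thus each pair contributes at most $2^{k'}$ (a factor $2^{-(k-k')}$ better than in the comparable-radii case) and is essentially supported where $|y - y'|$ lies within $O(1)$ of $r \pm r'$, i.e.\ on two thin annuli of radii $\asymp 2^k$ around $y$.

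Next I would count effective pairs using the density property built into the definition of $\mathcal{E}_{k'}(u)$: any $4$-ball of radius $\rho \leq 2^{k'}$ contains $\lesssim u\rho$ points of $\mathcal{E}_{k'}(u)$, so in particular each $3$-ball of radius $2^{k'}$ contains $\lesssim u\cdot 2^{k'}$ points of the slice $\{y' : (y', r') \in \mathcal{E}_{k'}(u)\}$. For fixed $(y, r) \in \mathcal{E}_k(u)$, the union over $r' \in \mathcal{R}_{k'}$ of the effective annuli forms a thick shell of thickness $\sim 2^{k'}$ around $y$, which is covered by $\sim 2^{2(k-k')}$ such $3$-balls, giving at most $u \cdot 2^{2k - k'}$ effective $y'$, with the compatible $r'$ determined up to $O(1)$ in each case. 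Pairing with the per-pair bound $2^{k'}$ and summing over $(y, r) \in \mathcal{E}_k(u)$ yields
$$|\langle G_{u, k'}, G_{u, k}\rangle| \lesssim u\cdot 2^{2k}\cdot \#\mathcal{E}_k,$$
and the symmetric argument (using the density of $\mathcal{E}_k(u)_Y$ for $y$ lying in an annulus around $y'$) yields the dual bound $u \cdot 2^{k+k'} \cdot \#\mathcal{E}_{k'}$.

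The main difficulty is converting these estimates into a sum that is bounded, independent of $u$, by $\sum_k 2^{2k}\#\mathcal{E}_k$, since taken in isolation neither bound gives decay in the separation $\Delta = k - k'$. I would dyadically decompose the double sum in $\Delta$ and interpolate between the two bounds (via $\min(A, B) \leq A^{1-t}B^t$) with a small parameter $t = t(\epsilon)$, producing a factor $2^{-t\Delta}$ at the cost of a mixed power $(\#\mathcal{E}_k)^{1-t}(\#\mathcal{E}_{k'})^t$; the factor $u$ can then be absorbed using the defining property of $N(u)$, which gives $u \leq 2^{\epsilon k'/100}$. A Schur-test style summation over $(k, k')$, combined with H\"older's inequality (or Young's inequality in the $\Delta$ variable), would then convert the mixed power back into $\sum_k 2^{2k}\#\mathcal{E}_k$ with an $\epsilon$-dependent constant. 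The hardest part is ensuring that the geometric $\Delta$-decay really is present in three dimensions, where Lemma~\ref{bessel} does not automatically produce it at the pair level as it does in $d \geq 4$; this will likely require more careful use of the rapid Schwartz decay factor, together with finer geometric counts on the intersections and tangencies of the thin annuli to refine the effective-pair bound beyond the crude $u\cdot 2^{2k - k'}$ count above.
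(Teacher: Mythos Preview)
Your two basic bounds are correct: with $a_k := 2^{2k}\#\mathcal{E}_k$, one has
\[
|\langle G_{u,k'}, G_{u,k}\rangle| \lesssim u\,a_k \qquad\text{and}\qquad |\langle G_{u,k'}, G_{u,k}\rangle| \lesssim u\,2^{\Delta}\,a_{k'},
\]
where $\Delta = k-k'$. But the interpolation step is miscomputed. Writing $A^{1-t}B^t$ in terms of the $a_k$'s gives
\[
A^{1-t}B^t = u\,2^{t\Delta}\,a_k^{1-t}a_{k'}^t,
\]
so the exponent of $2^\Delta$ is \emph{positive}, not negative. Equivalently, while your displayed factor $2^{-t\Delta}$ is correct relative to the weight $2^{2k}(\#\mathcal{E}_k)^{1-t}(\#\mathcal{E}_{k'})^t$, converting that mixed power back to the target $\sum_k a_k$ costs exactly $2^{2t\Delta}$, leaving net growth $2^{t\Delta}$. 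No choice of $t\in(0,1)$ and no H\"older/Young/Schur manipulation can repair this: the geometric mean $\sqrt{AB} = u\,2^{\Delta/2}\sqrt{a_k a_{k'}}$ already grows in $\Delta$, so the two bounds are jointly too weak in $d=3$. (In $d\ge 4$ the analogous bound $A$ carries an extra $2^{-(d-3)\Delta/2}$, which is why the HNS argument closes there.)

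The paper closes the gap by a mechanism you do not invoke: the assumed \emph{product structure} of $\mathcal{E}$. Localizing to cubes $Q$ of scale $2^k$ and writing $N_{R,Q,k'}$ for the number of $r'$-values in $\mathcal{E}_{k'}\cap Q^*$, the paper proves two estimates: one of the form $R^2\,\#(\mathcal{E}_k\cap Q^*)\,u\,(N_{R,Q,k'})^{-1}$ (your bound $A$ refined by the extra factor $(N_{R,Q,k'})^{-1}$, coming from the fact that for fixed $(Y,R,y)$ only $O(1)$ values of $r'$ contribute, while the density controls the full product $N_{Y,Q,k'}\cdot N_{R,Q,k'}$), and another of the form $N_{R,Q,k'}\cdot\#(\mathcal{E}_k\cap Q^*)\cdot 2^k(k\log u)\max(u^{5/6}2^{5k/6},\,u\,2^{k/2})$, obtained by the same annulus-tangency counting (via the geometric Lemma~\ref{geomlemma}) used in the comparable-radii case. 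One then applies the first when $N_{R,Q,k'}\ge 2^{k'\epsilon}$ and the second otherwise; the resulting bounds are summable over $k'>N(u)$ precisely because of the $2^{-k'\epsilon}$ (resp.\ $2^{-k/6}$, $2^{-k/2}$) decay, with the choice of $N(u)$ absorbing the residual powers of $u$. Without the product structure and the $N_{R,Q,k'}$ dichotomy, the incomparable-radii sum in three dimensions does not close by counting and interpolation alone.
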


Fix $u$ and $k$. Similar to the case of comparable radii, the first step is to cover $\mathcal{E}_{k}(u)$ by a collection $\mathcal{Q}_{u, k}$ of almost-disjoint cubes $Q$ of sidelength $2^{k+5}$. By the almost-disjointness of the cubes, is enough to estimate $|\left<G_{u, k^{\prime}}, G_{u, k}\right>|$ when we restrict our points in $\mathcal{E}_k(u)$ and $\mathcal{E}_{k^{\prime}}(u)$ to points in a fixed $Q^{\ast}$ and get an estimate in terms of $\#(\mathcal{E}_k\cap Q^{\ast})$, after which we may sum in $Q\in\mathcal{Q}_{u, k}$. So fix such a cube $Q$, and let $N_{R, Q, k}$ denote the cardinality of $(\mathcal{E}_k\cap Q^{\ast})_R$ and for a fixed $k^{\prime}$, let $N_{R, Q, k^{\prime}}$ denote the cardinality of $(\mathcal{E}_{k^{\prime}}\cap Q^{\ast})_R$. Similarly, let $N_{Y, Q, k}$ denote the cardinality of $(\mathcal{E}_k\cap Q^{\ast})_Y$ and for a fixed $k^{\prime}$, let $N_{Y, Q, k^{\prime}}$ denote the cardinality of $(\mathcal{E}_{k^{\prime}}\cap Q^{\ast})_Y$. Next, we prove a lemma that plays a role similar to Lemma \ref{rand2} in the comparable radii case.

\begin{lemma}\label{ink}
For each $Q\in\mathcal{Q}_{u, k}$, we have the estimates
\end{lemma}
\begin{multline}\label{i1}
\sum_{(Y, R)\in\mathcal{E}_k(u)\cap Q^{\ast}}\sum_{(y, r)\in\mathcal{E}_{k^{\prime}}(u)\cap Q^{\ast}}|\left<F_{Y, R}, F_{y, r}\right>|
\\
\lesssim R^{2}\#(\mathcal{E}_k\cap Q^{\ast})u(N_{R, Q, k^{\prime}})^{-1}
\end{multline}
and 
\begin{multline}\label{i2}
\sum_{(Y, R)\in\mathcal{E}_k(u)\cap Q^{\ast}}\sum_{(y, r)\in\mathcal{E}_{k^{\prime}}(u)\cap Q^{\ast}}|\left<F_{Y, R}, F_{y, r}\right>|
\\
\lesssim N_{R, Q, k^{\prime}}(\#(\mathcal{E}_k\cap Q^{\ast}))2^{k}(k\log(u))\max(u^{5/6}2^{5k/6}, u2^{k/2}).
\end{multline}

\begin{proof}[Proof of Lemma \ref{ink}]
We will first prove (\ref{i1}), which will be a good estimate in the case that $N_{R, Q, k^{\prime}}$ is large. For each $(Y, R)\in(\mathcal{E}_k(u)\cap Q^{\ast})$ we need only consider $y\in(\mathcal{E}_{k^{\prime}}(u)\cap Q^{\ast})_Y$ lying in an annulus of width $2^{k^{\prime}+5}$ built upon the sphere of radius $R$ centered at $Y$ in $\mathbb{R}^3$. Cover the intersection of this annulus with $(\mathcal{E}_{k^{\prime}}(u)\cap Q^{\ast})_Y$ by a collection $\mathcal{C}$ of $\lesssim R^{2}2^{-2k^{\prime}}$ $3$-dimensional cubes $C$ of sidelength $2^{k^{\prime}+3}$ in $\mathbb{R}^3$ such that each $C\cap(\mathcal{E}_{k^{\prime}}(u)\cap Q^{\ast})_Y$ is nonempty. For each $C\in\mathcal{C}$, let $\tilde{C}$ denote the $4$-dimensional cube $\tilde{C}=C\times [2^{k^{\prime}}-2^{k^{\prime}+2}, 2^{k^{\prime}}+2^{k^{\prime}+2}]$, and let $\tilde{\mathcal{C}}$ denote the corresponding collection of cubes $\tilde{C}$. Now note that ${C}\cap(\mathcal{E}_{k^{\prime}}(u)\cap Q^{\ast})_Y$ nonempty implies that $(\tilde{C}\cap\mathcal{E}_{k^{\prime}}\cap Q^{\ast})_R=(\mathcal{E}_{k^{\prime}}\cap Q^{\ast})_R$, and also that $\#(\tilde{C}\cap\mathcal{E}_{k^{\prime}})\lesssim u2^{k^{\prime}}$, and hence by the product structure of $\tilde{C}\cap\mathcal{E}_{k^{\prime}}\cap Q^{\ast}$,
\begin{multline}\label{cub}
\#((\tilde{C}\cap\mathcal{E}_{k^{\prime}}\cap Q^{\ast})_Y)\lesssim \#(\tilde{C}\cap\mathcal{E}_{k^{\prime}})(\#(\tilde{C}\cap\mathcal{E}_{k^{\prime}}\cap Q^{\ast})_R)^{-1}
\\
\lesssim u2^{k^{\prime}}(N_{R, Q, k^{\prime}})^{-1}.
\end{multline}
\indent
Next, note that for a fixed $Y\in(\mathcal{E}_k\cap Q^{\ast})_Y$, a fixed $R\in(\mathcal{E}_k\cap Q^{\ast})_R$, and a fixed $y\in(\mathcal{E}_{k^{\prime}}\cap Q^{\ast})_Y$, Lemma \ref{bessel} gives rapid decay for $|\left<F_{Y, R}, F_{y, r}\right>|$ as $r$ moves away from two possible values of $r^{\prime}$, that is, when $r$ moves far away from $r^{\prime}=R-|Y-y|$ and $r^{\prime}=|Y-y|-R$. For these values of $r^{\prime}$ we have $|\left<F_{Y, R}, F_{y, r^{\prime}}\right>|\lesssim 2^{k^{\prime}}$. Using (\ref{cub}) and our bound on the size of the collection $\mathcal{C}$, we thus have
\begin{multline*}
\sum_{(Y, R)\in\mathcal{E}_k(u)\cap Q^{\ast}}\sum_{(y, r)\in\mathcal{E}_{k^{\prime}}(u)\cap Q^{\ast}}|\left<F_{Y, R}, F_{y, r}\right>|
\\
\lesssim\sum_{(Y, R)\in\mathcal{E}_k\cap Q^{\ast}}\bigg(\sum_{\tilde{C}\in\tilde{\mathcal{C}}}\bigg(\sum_{(y, r)\in\mathcal{E}_{k^{\prime}}\cap Q^{\ast}\cap\tilde{C}}|\left<F_{Y, R}, F_{y, r}\right>|\bigg)\bigg)
\\
\lesssim\sum_{(Y, R)\in\mathcal{E}_k\cap Q^{\ast}}\bigg(\sum_{\tilde{C}\in\tilde{\mathcal{C}}}\bigg(\sum_{y\in(\mathcal{E}_{k^{\prime}}\cap Q^{\ast}\cap\tilde{C})_Y}\bigg(\sum_{a\in\mathbb{Z}, a\ge 0}
\\
\bigg(\sum_{\substack{r\in(\mathcal{E}_{k^{\prime}}\cap Q^{\ast})_R\\\max(|r^{\prime}-r+|Y-y^{\prime}||, |r^{\prime}+r-|Y-y||)\approx 2^a}}2^{-aN}2^{k^{\prime}}\bigg)\bigg)\bigg)\bigg)
\\
\lesssim R^{2}\#(\mathcal{E}_k\cap Q^{\ast})(N_{R, Q, k^{\prime}})^{-1}u,
\end{multline*}
which is (\ref{i1}).
\newline
\indent
Now we prove (\ref{i2}), which is the estimate that we will use in the case that $N_{R, Q, k^{\prime}}$ is small. This estimate is similar to (\ref{rand3}), and the proof is very similar with only minor modifications, but we give all the details anyways. 
\newline
\indent
By incurring a factor of $N_{R, Q, k}\cdot N_{R, Q, k^{\prime}}$, to estimate 
\begin{align*}
\sum_{(Y, R)\in\mathcal{E}_k(u)\cap Q^{\ast}}\sum_{(y, r)\in\mathcal{E}_{k^{\prime}}(u)\cap Q^{\ast}}|\left<F_{Y, R}, F_{y, r}\right>|,
\end{align*} 
it suffices to estimate for a fixed pair $r_1\in(\mathcal{E}_k\cap Q^{\ast})_R$ and $r_2\in(\mathcal{E}_{k^{\prime}}\cap Q^{\ast})_R$
\begin{align*}
\sum_{(Y, r_1)\in\mathcal{E}_k\cap Q^{\ast}}\sum_{(y, r_2)\in\mathcal{E}_{k^{\prime}}\cap Q^{\ast}}|\left<F_{Y, r_1}, F_{y, r_2}\right>|.
\end{align*}
Similar to the proof of (\ref{rand3}), for $s\ge 0$, let $N_{Y, Q, k}^{\prime}=2^s\le N_{Y, Q, k}$ be a given dyadic number. Fix $t\le 2^{k+10}$, and define $K_{k, k^{\prime}}(Q, s, t)$ to be the number of points $y\in(\mathcal{E}_k(u)\cap Q^{\ast})_Y$ such that there are $\ge N_{Y, Q, k}^{\prime}=2^s$ many points $y^{\prime}\in(\mathcal{E}_{k^{\prime}}\cap Q^{\ast})_Y$ such that $y^{\prime}$ lies in the annulus of inner radius $t$ and thickness $3$ centered at $y$. That is, define
\begin{align*}
K_{k, k^{\prime}}(Q, s, t):=\#\{y\in(\mathcal{E}_k(u)\cap Q^{\ast})_Y: \text{there exists at least }2^s\text{ many points }
\\
y^{\prime}\in(\mathcal{E}_{k^{\prime}}\cap Q^{\ast})_Y\text{ such that }||y^{\prime}-y|-(t+1.5)|\le 1.5\}.
\end{align*}
Also define
\begin{align*}
K_{k, k^{\prime}}^{\ast}(Q, s):=\max_{0\le t\le 2^{k+10}}K_{k, k^{\prime}}(Q, s, t).
\end{align*}
Note that the product structure of $\mathcal{E}$ implies that if both $\mathcal{E}_{k}\cap Q^{\ast}$ and $\mathcal{E}_{k^{\prime}}\cap Q^{\ast}$ are nonempty, then their $\mathcal{Y}$-projections are equal, and so (\ref{Kest2}) implies the bound
\begin{multline}\label{Kest3}
K_{k, k^{\prime}}(Q, s, t)\lesssim
\\
\max\bigg\{\min(u2^kN_{Y, Q, k}^{5/3}2^{-2s}, N_{Y, Q, k}), \min(u2^{k/2}N_{Y, Q, k}2^{-s}, N_{Y, Q, k})\bigg\}.
\end{multline}
Using Lemma \ref{bessel}, we may bound
\begin{multline}\label{b1}
\sum_{\substack{(Y, R)\in(\mathcal{E}_k(u)\cap Q^{\ast})\\(y, r)\in(\mathcal{E}_{k^{\prime}}(u)\cap Q^{\ast})}}|\left<F_{Y, R}, F_{y, r}\right>|
\\
\lesssim \sum_{\substack{r_1\in (\mathcal{E}_k(u)\cap Q^{\ast})_R\\r_2\in (\mathcal{E}_{k^{\prime}}(u)\cap Q^{\ast})_R}}\bigg(\sum_{\substack{Y\in(\mathcal{E}_k(u)\cap Q^{\ast})_Y\\y\in(\mathcal{E}_{k^{\prime}}\cap Q^{\ast})_Y}}|\left<F_{Y, r_1}, F_{y, r_2}\right>|\bigg)
\\
\lesssim 2^{k}\sum_{\substack{r_1\in (\mathcal{E}_k(u)\cap Q^{\ast})_R\\r_2\in (\mathcal{E}_{k^{\prime}}(u)\cap Q^{\ast})_R}}\bigg(\sum_{0\le a\le m+10}\bigg(\sum_{Y\in(\mathcal{E}_k(u)\cap Q^{\ast})_Y}
\\
\sum_{\substack{y\in(\mathcal{E}_{k^{\prime}}\cap Q^{\ast})_Y:\\min_{\pm, \pm}(1+|r_1\pm r_2\pm |y-y^{\prime}||)\approx 2^a}}
2^{-aN}
\bigg)\bigg)
\\
\lesssim 2^{k}\sum_{\substack{r_1\in (\mathcal{E}_k(u)\cap Q^{\ast})_R\\r_2\in (\mathcal{E}_{k^{\prime}}(u)\cap Q^{\ast})_R}}\bigg(\sum_{0\le a\le m+10}2^{-aN}
\\
\times\bigg(\sum_{s\ge 0: 2^s\le 2N_{Y, Q, k}}K_{k, k^{\prime}}^{\ast}(Q, s)2^s\bigg)\bigg)
\\
\lesssim 2^{k}N_{R, Q, k}N_{R, Q, k^{\prime}}\sum_{s\ge 0: 2^s\le 2N_{Y, Q, k}}K_{k, k^{\prime}}^{\ast}(Q, s)2^s.
\end{multline}
Applying (\ref{Kest3}), we have
\begin{multline}\label{mult1}
\sum_{\substack{(Y, R)\in(\mathcal{E}_k(u)\cap Q^{\ast})\\(y, r)\in(\mathcal{E}_{k^{\prime}}(u)\cap Q^{\ast})}}|\left<F_{Y, R}, F_{y, r}\right>|
\\
\lesssim N_{R, Q, k}N_{R, Q, k^{\prime}}2^{k}\sum_{s\ge 0: 2^s\lesssim N_{Y, Q, k}}\max\bigg\{\min(u2^kN_{Y, Q, k}^{5/3}2^{-s}, N_{Y, Q, k}2^s), 
\\
\min(u2^{k/2}N_{Y, Q, k}, N_{Y, Q, k}2^s)\bigg\}.
\end{multline}
Now, note that $u2^kN_{Y, Q}^{5/3}2^{-s}\ge N_{Y, Q}2^s$ if and only if $2^s\le u^{1/2}2^{k/2}N_{Y, Q}^{1/3}$. Also note that $u2^{k/2}N_{Y, Q, k}\ge N_{Y, Q, k}2^s$ if and only if $2^s\le u2^{k/2}$. Thus choosing the better estimate in the term $\min(u2^mN_{Y, Q}^{5/3}2^{-s}, N_{Y, Q}2^s)$ depending on $s$ and the better estimate in the term $\min(u2^{k/2}N_{Y, Q, k}, N_{Y, Q, k}2^s)$ yields that the left hand side of (\ref{mult1}) is bounded by
\begin{align}\label{al1}
N_{R, Q, k}N_{R, Q, k^{\prime}}2^{k}N_{Y, Q, k}\log(N_{Y, Q, k})\max(N_{Y, Q, k}^{1/3}u^{1/2}2^{k/2}, u2^{k/2}).
\end{align}
Using $N_{Y, Q, k}\lesssim u2^{k}$, (\ref{al1}) is bounded by
\begin{multline*}
N_{R, Q, k}N_{R, Q, k^{\prime}}2^{k}N_{Y, Q, k}(k\log(u))\max(u^{5/6}2^{5k/6}, u2^{k/2})
\\
\lesssim N_{R, Q, k^{\prime}}(\#(\mathcal{E}_k\cap Q^{\ast}))2^{k}(k\log(u))\max(u^{5/6}2^{5k/6}, u2^{k/2}),
\end{multline*}
which completes the proof of (\ref{i2}).
\end{proof}

\begin{proof}[Proof of Lemma \ref{inc}]

\indent
Fix $\epsilon>0$, and set $N(u)=100\epsilon^{-1}\log_2(2+u)$. We apply (\ref{i1}) when $N_{R, Q, k^{\prime}}\ge 2^{k^{\prime}\epsilon}$ and (\ref{i2}) when $N_{R, Q, k^{\prime}}\le 2^{k^{\prime}\epsilon}$, and then we sum over $N(u)<k^{\prime}< k$ for $k$ fixed to obtain
\begin{multline}\label{i3}
\sum_{\substack{N(u)<k^{\prime}<k\\k\text{ fixed}}}\,\sum_{(Y, R)\in\mathcal{E}_k(u)\cap Q^{\ast}}\sum_{(y, r)\in\mathcal{E}_{k^{\prime}}(u)\cap Q^{\ast}}|\left<F_{Y, R}, F_{y^{\prime}, r^{\prime}}\right>|
\\
\lesssim_{\epsilon} R^{2}\#(\mathcal{E}_k\cap Q^{\ast})\max(1, \log(u)u^{5/6}2^{-k/6+\epsilon}, \log(u)u2^{-k/2+\epsilon}).
\end{multline}
Next we sum over $Q\in\mathcal{Q}_{u, k}$ and $k>N(u)$ to obtain
\begin{multline}\label{i4}
\sum_k\sum_{Q\in\mathcal{Q}_{u, k}}\sum_{\substack{N(u)<k^{\prime}<k\\k\text{ fixed}}}\,\sum_{(Y, R)\in\mathcal{E}_k(u)\cap Q^{\ast}}\sum_{(y, r)\in\mathcal{E}_{k^{\prime}}(u)\cap Q^{\ast}}|\left<F_{Y, R}, F_{y^{\prime}, r^{\prime}}\right>|
\\
\lesssim_{\epsilon} \sum_k2^{2k}\#\mathcal{E}_k.
\end{multline}
We have thus shown that for the choice $N(u)=100\epsilon^{-1}\log_2(2+u)$, we have
\begin{align*}
\sum_{k>k^{\prime}>N(u)}|\left<G_{u, k^{\prime}}, G_{u, k}\right>|\lesssim_{\epsilon}\sum_k2^{2k}\#\mathcal{E}_k.
\end{align*}
\end{proof}

\subsection*{Putting it together}
Combining (\ref{goal1}), (\ref{comp}) and (\ref{i5}), we have that for every $\epsilon>0$,
\begin{align}
\Norm{G_u}_2^2=\Norm{\sum_kG_{u, k}}_2^2\lesssim_{\epsilon}\log_2(2+u)\sum_k2^{2k}(\#\mathcal{E}_k)u^{11/13+\epsilon}.
\end{align}

\iffalse{Now we combine this with the $L^1$ bound of Lemma \ref{L1lemma} to get an $L^p$ bound.
\begin{lemma}\label{Lplemma2}
Suppose $d=3$. Then for $p\le 2$ and for every $\epsilon>0$,
\begin{align*}
\Norm{G_u}_p\lesssim_{\epsilon}u^{-(1/p-1/(13/12)+\epsilon)}(\sum_k2^{2k}\#\mathcal{E}_k)^{1/p}.
\end{align*}
\end{lemma}
\begin{proof}[Proof of Lemma \ref{Lplemma2}] 
Interpolating, we have
\begin{multline*}
\Norm{G_u}_p^p\le (\text{meas}(\text{supp}(G_u)))^{1-p/2}\Norm{G_u}_2^p
\\
\le (\sum_k\text{meas}(\text{supp}(G_{u, k})))^{1-p/2}\Norm{G_u}_2^p
\\
\lesssim_{\epsilon} u^{-(1-p/2)}(u^{11p/26+\epsilon})\sum_k2^{2k}\#\mathcal{E}_{k}.
\end{multline*}
\end{proof}
Note that this is summable  in $u$ when $p/2+11p/26<1-\epsilon$, i.e. when $p<13/12-\epsilon$. 
}\fi
This completes the proof of Lemma \ref{L2lemma} and hence the proof of Proposition \ref{mainprop}. Thus we have finished the proof of Theorem \ref{mainthm}. The rest of the paper will be devoted to the (more technical) proof of Theorem \ref{mainthm2}.

\section{Preliminaries and reductions: part II}\label{prelimsec2}

Similarly to Section \ref{prelimsec}, in this section we will collect necessary preliminary results and reductions to prove Theorem \ref{mainthm2}. Much of the proof of Theorem \ref{mainthm2} will be similar to the proof of Theorem \ref{mainthm}, but there are nontrivial additional technical difficulties to the proof of Theorem \ref{mainthm2} that will make the proof more involved. The main reason for this is the fact that Theorem \ref{mainthm2} is a full $L^p$ characterization rather than a restricted strong type $(p, p)$ result, and therefore we cannot simply assume that our discrete sets $\mathcal{E}$ have product structure as we were able to do in the proof of Theorem \ref{mainthm}. 

\subsection*{Discretization and density decomposition of sets}
The first step will be to discretize our problem, and in preparation for this we will first need to introduce some notation. Let $\mathcal{Y}$ be a $1$-separated set of points in $\mathbb{R}^4$ and let $\mathcal{R}$ be a $1$-separated set of radii $\ge 1$. Let $\mathcal{E}\subset\mathcal{Y}\times\mathcal{R}$ be a finite set, and let 
\begin{align*}
u\in\mathcal{U}=\{2^{\nu}, \nu=0, 1, 2,\ldots\}
\end{align*} 
be a collection of dyadic indices. For each $k$, let $\mathfrak{B}_k$ denote the collection of all $5$-dimensional balls of radius $\le 2^k$. For a ball $B$, let $\text{rad}(B)$ denote the radius of $B$. Following \cite{hns}, define:
\begin{align*}
\mathcal{R}_k:=\mathcal{R}\cap [2^k, 2^{k+1}),
\end{align*}
\begin{align*}
\mathcal{E}_k:=\mathcal{E}\cap(\mathcal{Y}\times\mathcal{R}_k),
\end{align*}
\begin{align*}
\wh{\mathcal{E}}_k(u):=\{(y, r)\in\mathcal{E}_k:\,\exists B\in\mathfrak{B}_k\text{ such that }\#(\mathcal{E}_k\cap B)\ge u\,\text{rad}(B)\},
\end{align*}
\begin{align*}
\mathcal{E}_k(u)=\wh{\mathcal{E}}_k(u)\setminus\bigcup_{\substack{u^{\prime}\in\mathcal{U}\\u^{\prime}>u}}\wh{\mathcal{E}}_k(u^{\prime}).
\end{align*}
We will refer to $u$ as the \textit{density} of the set $\mathcal{E}_k(u)$. Note that we have the decomposition
\begin{align*}
\mathcal{E}_k=\bigcup_{u\in\mathcal{U}}\mathcal{E}_k(u).
\end{align*}
Let $\sigma_r$ denote the surface measure on $rS^{3}$, the $3$-sphere centered at the origin of radius $r$. Now fix a smooth, radial function $\psi_0$ which is supported in the ball centered at the origin of radius $1/10$ such that $\wh{\psi_0}$ vanishes to order $40$ at the origin. Let $\psi=\psi_0\ast\psi_0$. For $y\in\mathcal{Y}$ and $r\in\mathcal{R}$, define
\begin{align*}
F_{y, r}=\sigma_r\ast\psi(\cdot-y).
\end{align*}
For a given function $\gamma:\mathcal{Y}\times\mathcal{R}\to\mathbb{C}$ and finite set $\mathcal{E}\subset\mathcal{Y}\times\mathcal{R}$, further define 
\begin{align*}
G_{u, k}^{\gamma, \mathcal{E}}:=\sum_{(y, r)\in\mathcal{E}_k(u)}\gamma(y, r)F_{y, r},
\end{align*}
\begin{align*}
G_{u}^{\gamma, \mathcal{E}}:=\sum_{k\ge 0}G_{u, k}^{\gamma, \mathcal{E}},
\end{align*}
\begin{align*}
G_{k}^{\gamma, \mathcal{E}}:=\sum_{u\in\mathcal{U}}G_{u, k}^{\gamma, \mathcal{E}}.
\end{align*}

\subsection*{The discretized $L^p$ inequality}

We will prove the following proposition, which implies our main result for compactly supported multipliers.
\begin{proposition}\label{mainprop2}
Let $1<p<36/29$. Let $\gamma:\mathcal{Y}\times\mathcal{R}\to\mathbb{C}$ be a function that is a tensor product, i.e., $\gamma(y, r)=\gamma_1(y)\gamma_2(r)$. For each $j\in\mathbb{Z}$, define
\begin{align*}
\mathcal{E}^{\gamma, j}:=\{(y, r)\in\mathcal{Y}\times\mathcal{R}:\,2^j\le|\gamma(y, r)|<2^{j+1}\}.
\end{align*}
\begin{align*}
\mathcal{E}_k^{\gamma, j}:=\{(y, r)\in\mathcal{Y}\times\mathcal{R}:\,r\in\mathcal{R}_k,\, 2^j\le|\gamma(y, r)|<2^{j+1}\}.
\end{align*}
Then
\begin{align}\label{propline2}
\Norm{\sum_{(y, r)\in\mathcal{E}^{\gamma, j}}\gamma(y, r)F_{y, r}}_p^p\lesssim_p 2^{jp}\sum_{l\ge j}2^{(l-j)/5}\sum_k 2^{3k}\#\mathcal{E}_k^{\gamma, l}.
\end{align}
\end{proposition}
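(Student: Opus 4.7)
The plan is to mirror the three-dimensional strategy used to prove Theorem~\ref{mainthm}, with two modifications dictated by the new setting: the ambient dimension is raised to four (so the volume weight becomes $2^{3k}$ and the critical exponent is $36/29$), and the product-structure hypothesis on $\mathcal{E}$ is replaced by the tensor-product hypothesis on $\gamma$. In broad strokes, I would (i) establish a four-dimensional $L^{2}$-bound on $G_u^{\gamma,\mathcal{E}}$ that uses the factorization $\gamma=\gamma_1\gamma_2$ to recover product structure cell by cell; (ii) interpolate this $L^2$-bound with the support-size bound $|\mathrm{supp}(G_{u,k}^{\gamma,\mathcal{E}})|\lesssim u^{-1}2^{3k}\#\mathcal{E}_k$ (a direct analogue of Lemma~\ref{L1lemma}) to produce an $L^p$-bound on $G_u^{\gamma,\mathcal{E}}$; and (iii) sum over dyadic densities $u$ to recover (\ref{propline2}).

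For step (i), I would set $\mathcal{A}_a:=\{y\in\mathcal{Y}:|\gamma_1(y)|\sim 2^a\}$ and $\mathcal{B}_b:=\{r\in\mathcal{R}:|\gamma_2(r)|\sim 2^b\}$; every $(y,r)\in\mathcal{E}^{\gamma,j}$ lies in a unique cell $\mathcal{A}_a\times\mathcal{B}_b$ with $a+b\in\{j-1,j\}$, and each such cell has genuine product structure. On each cell the comparable-radii/incomparable-radii split used in the proof of Lemma~\ref{L2lemma} can be adapted, with Lemma~\ref{bessel} in its $d=4$ form and the geometric Lemma~\ref{geomlemma} applied after slicing three-spheres inside $\mathbb{R}^4$. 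Aiming for the cell-wise $L^2$-bound $\|G_u\|_2^2\lesssim u^{11/18+\epsilon}\bigl(\sum_k 2^{3k}\#\mathcal{E}_k\bigr)$ is dictated by the numerology: H\"older interpolation with the support bound gives $\|G_u\|_p^p\lesssim u^{\,p/2-1+11p/36+\epsilon}\bigl(\sum_k 2^{3k}\#\mathcal{E}_k\bigr)$, summable in $u$ precisely when $1<p<36/29$, because $2/p-1=11/18$ at the critical $p$.

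The main obstacle is combining the cell-wise estimates into a single estimate on $\mathcal{E}^{\gamma,j}$: the collection of cells $(a,b)$ with $a+b\in\{j-1,j\}$ is in principle infinite, so na\"ive Minkowski fails. To control the cell-sum I would use the oscillation in Lemma~\ref{bessel}, which forces cross-interactions between cells whose radii sit at very different dyadic levels to decay, concentrating the effective contribution in a band around $a+b\approx j$. A H\"older step in the cell index then converts the remaining slack into the geometric weight $2^{(l-j)/5}$, and the appearance of $\#\mathcal{E}_k^{\gamma,l}$ for $l\ge j$ on the right-hand side reflects the fact that cell-tails with $\max(a,b)$ large absorb points of the higher-level sets $\mathcal{E}^{\gamma,l}$ when accounting for the cross-interaction bookkeeping. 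The genuinely hard step is therefore the 4D quantitative improvement on the analogue of (\ref{hnsl22}) down to the exponent $11/18+\epsilon$, which is precisely where the tensor-product structure of $\gamma$ and the geometric input of Lemma~\ref{geomlemma} both enter.
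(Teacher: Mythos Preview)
Your overall architecture --- density decomposition, an $L^2$ bound with target exponent $u^{11/18+\epsilon}$, the support bound, H\"older interpolation, and summation over $u\in\mathcal U$ --- matches the paper, and your numerology is correct. The paper also decomposes $\mathcal{E}^{\gamma,j}$ into product cells indexed by the dyadic level of $\gamma_1$, exactly as you describe with $\mathcal{A}_a\times\mathcal{B}_b$. One technical point you should not skip: the density decomposition is carried out on the slightly enlarged set $\tilde{\mathcal{E}}^{\gamma,j}=\{2^{j-5}\le|\gamma|\le 2^{j+5}\}$ rather than on $\mathcal{E}^{\gamma,j}$ itself, so that each product cell $\mathcal{E}_k^{\gamma,j,b}$ is contained in $\tilde{\mathcal{E}}^{\gamma,j}$ and inherits the density bound $\#(\mathcal{E}_k^{\gamma,j,b}\cap Q^\ast)\lesssim u2^m$.

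However, your proposed mechanism for controlling the cell-sum is not the right one, and as stated it would not work. You write that oscillation in Lemma~\ref{bessel} ``forces cross-interactions between cells whose radii sit at very different dyadic levels to decay.'' But inside a fixed $\mathcal{E}_k^{\gamma,j}$ all radii lie in $[2^k,2^{k+1})$; the cells differ only in which $y$'s and $r$'s they contain, not in their radius scale, so there is no decay of this kind to exploit. The actual argument in the paper is combinatorial, not oscillatory. Within a cube $Q^\ast$ one groups the cell indices $b$ by the pair $(c,d)$ recording $\#(\mathcal{E}_k^{\gamma,j,b}\cap Q^\ast)\approx 2^c$ and $N_{Y,Q,b}/N_{R,Q,b}\approx 2^d$; there are only $\lesssim (m\log u)^2$ such pairs. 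The crucial observation is that if a single group $\mathcal B_{Q,c,d}$ contains many indices $b$, then one can pick $b_1<b_2$ in it with $b_2-b_1\gtrsim\#\mathcal B_{Q,c,d}$, and cross-pair the $Y$-projection of the $b_2$-cell with the $R$-projection of the $b_1$-cell. Since $|\gamma_1(y')\gamma_2(r)|\approx 2^{j+(b_2-b_1)}$ for such pairs, this manufactures $\gtrsim 2^c$ points of $\mathcal{E}_k^{\gamma,l}$ with $l\gtrsim j+\tfrac{1}{10}\#\mathcal B_{Q,c,d}$ inside $Q^\ast$. This pigeonhole step is what produces the weight $2^{(l-j)/10}$ in the $L^2$ bound (and hence $2^{(l-j)/5}$ after interpolation) and is the reason the right-hand side is forced to involve all $\#\mathcal{E}_k^{\gamma,l}$ with $l\ge j$. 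Without this mechanism you have no way to sum over the a priori unbounded family of cells.
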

Using the dyadic interpolation lemma (Lemma \ref{dyad}), we obtain the following corollary.
\begin{corollary}\label{maincor2}
Let $\gamma:\mathcal{Y}\times\mathcal{R}\to\mathbb{C}$ be a function that is a tensor product, i.e. $\gamma(y, r)=\gamma_1(y)\gamma_2(r)$. Let $h:\mathbb{R}^{5}\to\mathbb{C}$ be a function that is a tensor product, i.e. $h(y, r)=h_1(y)h_2(r)$. Then for $1<p<36/29$, we have
\begin{align}\label{dis2}
\Norm{\sum_{(y, r)\in\mathcal{Y}\times\mathcal{R}}\gamma(y, r)F_{y, r}}_p\lesssim_p\bigg(\sum_{(y, r)\in\mathcal{Y}\times\mathcal{R}}|\gamma(y, r)|^pr^3\bigg)^{1/p}.
\end{align}
Also
\begin{align}\label{cont2}
\Norm{\int_{\mathbb{R}^3}\int_1^{\infty}h(y, r)F_{y, r}\,dr\,dy}_p\lesssim_p\bigg(\int_{\mathbb{R}^3}\int_1^{\infty}|h(y, r)|^pr^3\,dr\,dy\bigg)^{1/p}.
\end{align}
\end{corollary}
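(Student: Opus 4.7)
The plan is to derive \eqref{dis2} as a direct consequence of Proposition \ref{mainprop2} combined with the dyadic interpolation lemma (Lemma \ref{dyad}), and then to deduce \eqref{cont2} from \eqref{dis2} by the same lattice-discretization argument used to pass from \eqref{dis} to \eqref{cont} in Section \ref{prelimsec}. To set up the interpolation, I would let
\begin{align*}
F_j := \sum_{(y,r) \in \mathcal{E}^{\gamma, j}} \gamma(y,r) F_{y,r}, \qquad t_l := \sum_k 2^{3k} \#\mathcal{E}_k^{\gamma, l}, \qquad s_j := \sum_{l \ge j} 2^{(l-j)/5} t_l,
\end{align*}
so that Proposition \ref{mainprop2} (applicable because $\gamma$ is a tensor product) reads $\|F_j\|_q^q \le C(q)\, 2^{jq} s_j$ for every $q \in (1, 36/29)$.

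Given $p \in (1, 36/29)$, I would pick $p_0, p_1$ with $1 < p_0 < p < p_1 < 36/29$ and set $M := \max_\nu C(p_\nu)^{1/p_\nu}$, so that $\|F_j\|_{p_\nu}^{p_\nu} \le 2^{j p_\nu} M^{p_\nu} s_j$ for $\nu = 0, 1$, which is exactly the hypothesis of Lemma \ref{dyad}. Applying that lemma yields $\big\|\sum_j F_j\big\|_p^p \lesssim M^p \sum_j 2^{jp} s_j$. Switching the order of summation and using $p > 1/5$ (which is automatic since $p > 1$) gives a geometric series in $j$ dominated by its largest term:
\begin{align*}
\sum_j 2^{jp} s_j = \sum_l t_l\, 2^{l/5} \sum_{j \le l} 2^{j(p - 1/5)} \lesssim \sum_l 2^{lp} t_l \lesssim \sum_{(y,r)} |\gamma(y,r)|^p\, r^3,
\end{align*}
where the final estimate uses $r \approx 2^k$ and $|\gamma| \approx 2^l$ on $\mathcal{E}_k^{\gamma, l}$. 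Taking $p$-th roots delivers \eqref{dis2}.

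For \eqref{cont2}, I would follow the proof of \eqref{cont} in Corollary \ref{maincor} essentially verbatim: write $y = z + w$ with $z \in \mathbb{Z}^4$, $w \in [0,1)^4$ and $r = n + \tau$ with $n \in \mathbb{N}$, $\tau \in [0,1)$, then apply Minkowski's inequality to bring the $L^p$-norm inside the integration in $(w, \tau)$. The crucial observation is that for each fixed $(w, \tau)$ the discrete weight $(z, n) \mapsto h_1(z+w)\, h_2(n+\tau)$ is still a tensor product, so \eqref{dis2} applies; a final Jensen (or H\"older) step in $(w, \tau)$, exploiting that $[0,1)^4 \times [0,1)$ has unit measure, converts the estimate into the desired integral form.

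The one point requiring a bit of care is ensuring finiteness of the constants $C(p_0), C(p_1)$ produced by Proposition \ref{mainprop2}; this amounts to choosing $p_0, p_1$ strictly inside the open interval $(1, 36/29)$, which is always possible for $p$ in that range. Otherwise the argument is purely a reorganization of sums together with the standard lattice discretization, and requires no new geometric input beyond what is already encoded in Proposition \ref{mainprop2}.
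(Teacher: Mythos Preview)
Your proposal is correct and follows essentially the same approach as the paper: apply Lemma~\ref{dyad} with $F_j$ and $s_j$ exactly as you define them, then deduce \eqref{cont2} from \eqref{dis2} by the lattice discretization and Minkowski/H\"older argument. You have simply made explicit the step the paper leaves implicit---namely the Fubini computation $\sum_j 2^{jp} s_j \lesssim \sum_l 2^{lp} t_l$ and the choice of $p_0,p_1$ inside $(1,36/29)$---but the substance is identical.
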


\begin{proof}[Proof that Proposition \ref{mainprop2} implies Corollary \ref{maincor2}]
Apply Lemma \ref{dyad} with $F_j=\sum_{(y, r)\in\mathcal{E}^{\gamma, j}}\gamma(y, r)F_{y, r}$, $M^p$ the implied constant from (\ref{propline2}), and 
\begin{align*}
s_j=\sum_{l\ge j}2^{(l-j)/5}\sum_k2^{3k}\#\mathcal{E}_k^{\gamma, l}
\end{align*}
to obtain (\ref{dis2}).
\newline
\indent
Now we prove (\ref{cont2}). Let $y=z+w$ for $z\in\mathbb{Z}^4$ and $w\in Q_0:=[0, 1)^4$ and $r=n+\tau$ for $n\in\mathbb{N}$ and $0\le\tau<1$. By Minkowski's inequality and (\ref{dis2}),
\begin{multline*}
\Norm{\int_{\mathbb{R}^4}\int_1^{\infty}h(y, r)F_{y, r}\,dr\,dy}_p
\\
\lesssim_p\int\int_{Q_0\times [0, 1)}\Norm{\sum_{z\in\mathbb{Z}^4}\sum_{n=1}^{\infty}h_2(n+\tau)h_1(z+w)F_{z+w, n+\tau}}_p\,dw\,d\tau
\\
\lesssim_p\int\int_{Q_0\times [0, 1)}\bigg(\sum_{z\in\mathbb{Z}^4}\sum_{n=1}^{\infty}|h_2(n+\tau)h_1(z+w)|^p(n+\tau)^3\bigg)^{1/p}\,dw\,d\tau
\\
\lesssim_p\bigg(\int_{\mathbb{R}^3}\int_1^{\infty}|h(y, r)|^pr^3\,dr\,dy\bigg)^{1/p},
\end{multline*}
where in the last step we have used H\"{o}lder's inequality.
\end{proof}

\subsection*{Support size estimates vs. $L^2$ inequalities}
Fix a function $\gamma:\mathcal{Y}\times\mathcal{R}\to\mathbb{C}$ that is a tensor product, i.e. $\gamma(y, r)=\gamma_1(y)\gamma_2(r)$, and fix $j\in\mathbb{Z}$. Let 
\begin{align*}
\tilde{\mathcal{E}}^{\gamma, j}:=\{(y, r)\in\mathcal{Y}\times\mathcal{R}:\, 2^{j-5}\le |\gamma(y, r)|\le 2^{j+5}\},
\end{align*}
and recall the density decomposition
\begin{align*}
\tilde{\mathcal{E}}^{\gamma, j}_k=\bigcup_{u\in\mathcal{U}}\tilde{\mathcal{E}}^{\gamma, j}_k(u)
\end{align*}
defined previously. Define a function $\tilde{G}_{u, k}^{\gamma, \mathcal{E}^{\gamma, j}}:\mathcal{Y}\times\mathcal{R}\to\mathbb{C}$ to be the restriction of the function $G_{u, k}^{\gamma, \tilde{\mathcal{E}}^{\gamma, j}}$ to the set $\mathcal{E}^{\gamma, j}_k$, i.e.,
\begin{align*}
\tilde{G}_{u, k}^{\gamma, \mathcal{E}^{\gamma, j}}(y, r)=
\begin{cases}
\\G_{u, k}^{\gamma, \tilde{\mathcal{E}}^{\gamma, j}}(y, r),&\text{if }(y, r)\in\mathcal{E}^{\gamma, j}_k,
\\
0,&\text{if }(y, r)\notin\mathcal{E}^{\gamma, j}_k.
\end{cases}
\end{align*}
Similarly define
\begin{align*}
\tilde{G}_{u}^{\gamma, \mathcal{E}^{\gamma, j}}=\sum_{k\ge 0}\tilde{G}_{u, k}^{\gamma, \mathcal{E}^{\gamma, j}}
\end{align*}
and
\begin{align*}
\tilde{G}_{k}^{\gamma, \mathcal{E}^{\gamma, j}}=\sum_{u\in\mathcal{U}}\tilde{G}_{u, k}^{\gamma, \mathcal{E}^{\gamma, j}}.
\end{align*}
Note that $\tilde{G}_k^{\gamma, \mathcal{E}^{\gamma, j}}=G_k^{\gamma, \mathcal{E}^{\gamma, j}}$, and $\sum_kG_k^{\gamma, \mathcal{E}^{\gamma, j}}$ appears on the left hand side of the inequality in Proposition \ref{mainprop2}.
Similarly to \cite{hns}, we will show that the functions $\tilde{G}_{u, k}^{\gamma, \mathcal{E}^{j, \gamma}}$ either have relatively small support size or satisfy relatively good $L^2$ bounds. We begin with a support size bound which follows immediately from the similar bound in \cite{hns} that improves as the density $u$ increases.
\begin{customlemma}{C}\label{L1lemma2}
For all $u\in\mathcal{U}$, the Lebesgue measure of the support of $\tilde{G}_{u, k}^{\gamma, \mathcal{E}^{\gamma, j}}$ is $\lesssim u^{-1}2^{3k}\#(\bigcup_{l:\,|l-j|\le 10}\mathcal{E}_k^{\gamma, l})$.
\end{customlemma}
We will prove the following $L^2$ inequality which in some sense an improved version of Lemma $3.6$ from \cite{hns}, although the hypotheses are different since it is crucial that we assume that the underlying set is of the form $\mathcal{E}^{\gamma, j}$, i.e. the $\approx 2^j$ level set of some function $\gamma(y, r)=\gamma_1(y)\gamma_2(r)$. This inequality improves as the density $u$ decreases. In \cite{hns}, the analogous $L^2$ inequality proved is 
\begin{align}\label{hnsl22}
\Norm{\tilde{G}_u^{\gamma, \mathcal{E}^{\gamma, j}}}_2^2\lesssim u^{\frac{2}{d-1}}\log(2+u)2^{2j}\sum_k2^{k(d-1)}\#\bigg(\bigcup_{l:\,|l-j|\le 10}\mathcal{E}_k^{\gamma, j}\bigg).
\end{align}
We use geometric methods to improve on (\ref{hnsl22}) in four dimensions, and our argument will rely on Lemma \ref{geomlemma} proved later in Section \ref{geomsec}.

\begin{lemma}\label{L2lemma2}
Let $\mathcal{E}^{\gamma, j}$, $\mathcal{E}^{\gamma, j}_k$, and $\tilde{G}_{u}^{\gamma, \mathcal{E}^{\gamma, j}}$ be as above. Then for every $\epsilon>0$,
\begin{align*}
\Norm{\tilde{G}_u^{\gamma, \mathcal{E}^{\gamma, j}}}_2^2\lesssim_{\epsilon} u^{\frac{11}{18}+\epsilon}2^{2j}\sum_{l\ge j}2^{(l-j)/10}\sum_k2^{3k}\#\mathcal{E}_k^{\gamma, l}.
\end{align*}
\end{lemma}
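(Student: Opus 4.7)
The plan is to follow the strategy used for Lemma \ref{L2lemma} in the three-dimensional case, adapting it to four dimensions and handling the absence of an a priori product structure on $\mathcal{E}$. The essential new ingredient is that the hypothesis $\gamma(y,r) = \gamma_1(y)\gamma_2(r)$ is a tensor product, which lets us recover product structure by decomposing the level set. Concretely, set $\mathcal{Y}^a := \{y \in \mathcal{Y} : 2^a \le |\gamma_1(y)| < 2^{a+1}\}$ and $\mathcal{R}^b := \{r \in \mathcal{R} : 2^b \le |\gamma_2(r)| < 2^{b+1}\}$. Then $\tilde{\mathcal{E}}^{\gamma,j}$ is the union over $(a,b)$ with $a+b \in [j-6,j+6]$ of the product sets $(\mathcal{Y}^a \times \mathcal{R}^b) \cap \tilde{\mathcal{E}}^{\gamma,j}$, each of which does possess the product structure exploited in Section~3. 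Moreover on each piece $|\gamma(y,r)| \approx 2^j$, so we can pull out a factor of $2^{2j}$ and reduce to estimates for the indicator-type sums $G_u$ that were studied for Proposition \ref{mainprop}.

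First I would perform this tensor decomposition and bound $\Norm{\tilde G_u^{\gamma,\mathcal{E}^{\gamma,j}}}_2^2$ via Cauchy--Schwarz, at a cost of a logarithmic factor in the number of relevant $(a,b)$ pairs, as a sum of $L^2$ norms squared over product pieces $\mathcal{Y}^a \times \mathcal{R}^b$. Second, within each product piece I would carry out the same split as in Lemmas \ref{lem1} and \ref{inc}, namely into the ``comparable radii'' contribution $\sum_k \|G_{u,k}\|_2^2$ and the ``incomparable radii'' contribution $\sum_{k>k'>N(u)}|\langle G_{u,k'},G_{u,k}\rangle|$, with the same choice $N(u) = 100\epsilon^{-1}\log_2(2+u)$. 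The inner product estimates come from Lemma \ref{bessel} with $d=4$, giving the pointwise bound $(rr')^{3/2}(1+|y-y'|+|r-r'|)^{-3/2}$ with rapid decay in $|r \pm r' \pm |y-y'||$. The Kakeya-type count $K^\ast_k(Q,s)$ that controls pairwise tangencies will be set up exactly as in the proof of Lemma \ref{rand2}, but now bounding pairs of $3$-spheres in $\mathbb{R}^4$ rather than $2$-spheres in $\mathbb{R}^3$.

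Third, I would obtain the four-dimensional analog of (\ref{Kest}) by slicing $4$-dimensional annuli by a family of affine hyperplanes. On each slice the intersection becomes an intersection of three-dimensional annuli, to which Lemma \ref{geomlemma} applies directly. After integrating over the slicing parameter and re-optimizing the free parameter (the analog of $a=11/13$), the arithmetic produces the dimension-corrected exponent $11/18$ in place of $11/13$. The incomparable-radii case, following Lemma \ref{inc}, is geometrically simpler because the tangency set for a fixed point has codimension at least one in radius and in annulus position, and hence contributes a lower order term.

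Finally I would reassemble: summing the estimates over all product pieces $(\mathcal{Y}^a \times \mathcal{R}^b)$ with $a+b\approx j$, re-indexing by $l := \max(a,b)$ (so that $l \ge j/2$ roughly, and summing against a bounded weight in $a$ with $a+b$ fixed produces a geometric series), and absorbing the number of levels of $\gamma_1,\gamma_2$ by paying a small power $2^{(l-j)/10}$ rather than a logarithm, yields the stated right-hand side $u^{11/18+\epsilon} 2^{2j} \sum_{l\ge j} 2^{(l-j)/10} \sum_k 2^{3k}\#\mathcal{E}_k^{\gamma,l}$. The main obstacle is the step establishing the four-dimensional analog of (\ref{Kest}): in $\mathbb{R}^4$ two $3$-spheres typically meet in a $2$-sphere rather than in a curve, so Lemma \ref{geomlemma} cannot be invoked directly and a careful slicing (or fibration) argument is required, together with the bookkeeping that translates the three-dimensional threshold $l \ge j/2 + 20$ into a four-dimensional constraint that still produces nontrivial gain. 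Once this geometric input is in place, the rest of the proof is a mechanical, if technical, adaptation of the three-dimensional argument.
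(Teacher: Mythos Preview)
Your overall architecture matches the paper: the tensor decomposition into product pieces $\mathcal{E}_k^{\gamma,j,b}=\{2^{b-3}\le|\gamma_1(y)|\le 2^{b+3}\}\times\{2^{j-b-3}\le|\gamma_2(r)|\le 2^{j-b+3}\}$, the comparable/incomparable split with $N(u)=100\epsilon^{-1}\log_2(2+u)$, and the four-dimensional annulus bound obtained by slicing (this is exactly Corollary~\ref{geomcor}) are all what the paper does. The gap is in how you propose to sum over the tensor index $b$.

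You claim Cauchy--Schwarz over the $(a,b)$ pairs costs a logarithmic factor, and that re-indexing by $l:=\max(a,b)$ turns the reassembly into a geometric series absorbable by $2^{(l-j)/10}$. Neither works. Inside a cube $Q^*$ of side $\sim 2^m$, the number of $b$'s with $\mathcal{E}_k^{\gamma,j,b}\cap Q^*\ne\emptyset$ can be as large as $2^m$ (one for each dyadic level of $\gamma_2$ on the $r$'s in $Q^*$), so a bare Cauchy--Schwarz loses a full power of $2^m$. And your $l=\max(a,b)$ satisfies only $l\gtrsim j/2$; it is not the index on the right side of the lemma, which labels level sets of $|\gamma|$ and satisfies $l\ge j$. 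The paper handles this differently: inside each $Q^*$ it sorts the $b$'s into classes $\mathcal{B}_{Q,c,d}$ by local cardinality $2^c\approx\#(\mathcal{E}_k^{\gamma,j,b}\cap Q^*)$ and aspect ratio $2^d\approx N_{Y,Q,b}/N_{R,Q,b}$; since $2^c,2^d\lesssim u2^m$ there are only $O((m\log u)^2)$ classes. The analogues of \eqref{rand3}--\eqref{rand4} for a fixed class carry an extra factor $(\#\mathcal{B}_{Q,c,d})^2$. The key combinatorial point is that if $M:=\#\mathcal{B}_{Q,c,d}\gg m\log u$, then pairing the $Y$-projection at the largest $b$ with the $R$-projection at the smallest $b$ produces $\gtrsim 2^c$ points $(y,r)\in Q^*$ with $|\gamma(y,r)|\approx 2^{j+M}$, hence lying in $\mathcal{E}_k^{\gamma,l}\cap Q^*$ for some $l>j+M/10$; thus $2^cM^2\lesssim 2^{(l-j)/10}\#(\mathcal{E}_k^{\gamma,l}\cap Q^*)$. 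This cross-level cardinality trade-off is where $\sum_{l\ge j}2^{(l-j)/10}\#\mathcal{E}_k^{\gamma,l}$ actually comes from, and without it the argument does not close.
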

Combining Lemma \ref{L1lemma2} and Lemma \ref{L2lemma2}, we obtain the following $L^p$ bound.
\begin{lemma}\label{Lplemma2}
For $p\le 2$, for every $\epsilon>0$,
\begin{align*}
\Norm{\tilde{G}_u^{\gamma, \mathcal{E}^{\gamma, j}}}_p\lesssim_{\epsilon, p}u^{-(1/p-29/36-\epsilon)}2^j(\sum_{l\ge j}2^{(l-j)/10}\sum_k2^{3k}\#\mathcal{E}_k^{\gamma, l})^{1/p}.
\end{align*}
\end{lemma}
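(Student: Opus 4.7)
The plan is to deduce Lemma~\ref{Lplemma2} by straightforward H\"older interpolation between the support-size bound of Lemma~\ref{L1lemma2} and the $L^2$ bound of Lemma~\ref{L2lemma2}, exactly as in the proof of Lemma~\ref{Lplemma} in three dimensions. Write $S := \sum_{l\ge j}2^{(l-j)/10}\sum_k 2^{3k}\#\mathcal{E}_k^{\gamma, l}$. First, summing the support bound from Lemma~\ref{L1lemma2} over $k$ and using that the cross terms with $|l-j|\le 10$ are absorbed into $S$ (since $2^{(l-j)/10}\gtrsim 1$ there), I obtain
\[
\operatorname{meas}\bigl(\operatorname{supp}\tilde{G}_u^{\gamma, \mathcal{E}^{\gamma, j}}\bigr)\ \lesssim\ u^{-1}S.
\]

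Next, recall that for any function $F$ supported on a set of measure $M$ and any $1<p\le 2$, H\"older's inequality (with exponents $2/p$ and $2/(2-p)$) gives
\[
\Norm{F}_p\ \le\ M^{1/p-1/2}\Norm{F}_2.
\]
Applying this with $F = \tilde{G}_u^{\gamma, \mathcal{E}^{\gamma, j}}$, $M\lesssim u^{-1}S$, and $\Norm{\tilde{G}_u^{\gamma, \mathcal{E}^{\gamma, j}}}_2\lesssim_\epsilon u^{11/36+\epsilon/2}\,2^{j}\,S^{1/2}$ (by taking the square root of the estimate in Lemma~\ref{L2lemma2}), I get
\[
\Norm{\tilde{G}_u^{\gamma, \mathcal{E}^{\gamma, j}}}_p\ \lesssim_\epsilon\ (u^{-1}S)^{1/p-1/2}\cdot u^{11/36+\epsilon/2}\,2^{j}\,S^{1/2}\ =\ u^{-1/p+1/2+11/36+\epsilon/2}\,2^j\,S^{1/p}.
\]
Since $1/2+11/36 = 18/36+11/36 = 29/36$, the exponent of $u$ becomes $-(1/p-29/36-\epsilon/2)$, which after relabeling $\epsilon/2\mapsto\epsilon$ is precisely the bound claimed in Lemma~\ref{Lplemma2}.

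There is no substantive obstacle here: the whole statement is an interpolation between two inputs that are assumed, and the only thing to check is the arithmetic of the exponents, namely $-1\cdot(1/p-1/2) + 11/36 = -(1/p-29/36)$, which is the reason the numerical threshold $36/29$ appears in Theorem~\ref{mainthm2}. The real work of the paper lies in proving Lemma~\ref{L2lemma2} (the geometric/scalar product estimates together with Lemma~\ref{geomlemma}); Lemma~\ref{Lplemma2} is merely the bookkeeping step that packages those inputs into a usable $L^p$ bound which will then be summed over $u\in\mathcal{U}$ to close the proof of Proposition~\ref{mainprop2}.
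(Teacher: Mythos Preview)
Your proposal is correct and follows exactly the same approach as the paper: H\"older interpolation between the support-size bound of Lemma~\ref{L1lemma2} and the $L^2$ bound of Lemma~\ref{L2lemma2}, with the same arithmetic $-(1/p-1/2)+11/36=-(1/p-29/36)$. You have in fact given slightly more detail than the paper, which compresses the entire argument into a single displayed line.
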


\begin{proof}[Proof of Lemma \ref{Lplemma2} given Lemma \ref{L1lemma2} and Lemma \ref{L2lemma2}]
By H\"{o}lder's inequality,
\begin{multline*}
\Norm{\tilde{G}_u^{\gamma, \mathcal{E}^{\gamma, j}}}_p\lesssim_p (\text{meas}(\text{supp}(\tilde{G}_u^{\gamma, \mathcal{E}^{\gamma, j}})))^{1/p-1/2}\Norm{\tilde{G}_u^{\gamma, \mathcal{E}^{\gamma, j}}}_2
\\
\lesssim_{\epsilon, p}u^{29/36-1/p+\epsilon}2^j(\sum_{l\ge j}2^{(l-j)/10}\sum_k2^{3k}\#\mathcal{E}_k^{\gamma, l})^{1/p}.
\end{multline*}
\end{proof}

Summing over $u\in\mathcal{U}$, we obtain Proposition \ref{mainprop2}. Thus to prove Proposition \ref{mainprop2} it suffices to prove Lemma \ref{L2lemma2}. One may deduce Theorem \ref{mainthm2} from Corollary \ref{maincor2} in the same way as one deduces Theorem \ref{mainthm} from Corollary \ref{maincor}.

\section{Proof of the $L^2$ inequality: Part II}
We have shown in Section \ref{prelimsec2} that to prove our main result Theorem \ref{mainthm2} it remains to prove Lemma \ref{L2lemma2}, and this section is dedicated to the proof of that lemma. The proof will rely on a geometric lemma about sizes of multiple intersections of three-dimensional annuli, which is stated and proved in Section \ref{geomsec}.

\iffalse{\begin{proof}[Proof of Lemma \ref{bessel}]

\end{proof}}\fi

\subsection*{Another preliminary reduction}
Recall that our goal is to estimate the $L^2$ norm of $\tilde{G}_u^{\gamma, \mathcal{E}^{\gamma, j}}=\sum_{k\ge 0}\tilde{G}_{u, k}^{\gamma, \mathcal{E}^{\gamma, j}}$. Let $N(u)$ be a sufficiently large number to be chosen later (it will be some harmless constant depending on $u$ that is essentially $O(\log(2+u))$). We split the sum in $k$ as $\sum_{k\le N(u)}\tilde{G}_{u, k}^{\gamma, \mathcal{E}^{\gamma, j}}+\sum_{k>N(u)}\tilde{G}_{u, k}^{\gamma, \mathcal{E}^{\gamma, j}}$ and apply Cauchy-Schwarz to obtain
\begin{align}\label{goal1}
\Norm{\sum_k\tilde{G}_{u, k}^{\gamma, \mathcal{E}^{\gamma, j}}}_2^2\lesssim N(u)\bigg[\sum_k\Norm{\tilde{G}_{u, k}^{\gamma, \mathcal{E}^{\gamma, j}}}_2^2+\sum_{k>k^{\prime}>N(u)}|\left<\tilde{G}_{u, k^{\prime}}^{\gamma, \mathcal{E}^{\gamma, j}}, \tilde{G}_{u, k}^{\gamma, \mathcal{E}^{\gamma, j}}\right>|\bigg].
\end{align}
We may thus separately estimate $\sum_k\Norm{\tilde{G}_{u, k}^{\gamma, \mathcal{E}^{\gamma, j}}}_2^2$ and 
\begin{align*}
\sum_{k>k^{\prime}>N(u)}|\left<\tilde{G}_{u, k^{\prime}}^{\gamma, \mathcal{E}^{\gamma, j}}, \tilde{G}_{u, k}^{\gamma, \mathcal{E}^{\gamma, j}}\right>|,
\end{align*} 
which divides the proof of the $L^2$ estimate into two cases, the first being the case of ``comparable radii" and the second being the case of ``incomparable radii."

\subsection*{Comparable radii}
We will first estimate $\sum_k\Norm{\tilde{G}_{u, k}^{\gamma, \mathcal{E}^{\gamma, j}}}_2^2$. Our goal will be to prove the following lemma.

\begin{lemma}\label{lem1}
For every $\epsilon>0$,  
\begin{align}\label{comp}
\Norm{\tilde{G}_{u, k}^{\gamma, \mathcal{E}^{\gamma, j}}}_2^2\lesssim_{\epsilon}u^{11/18+\epsilon}2^{2j}\sum_{l\ge j}2^{(l-j)/10}2^{3k}(\#\mathcal{E}_k^{\gamma, l}).
\end{align}
\end{lemma}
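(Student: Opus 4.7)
The strategy closely parallels the proof of Lemma \ref{lem1} in the three-dimensional case, with two essential modifications: we exploit the tensor-product hypothesis $\gamma(y,r)=\gamma_1(y)\gamma_2(r)$ to recover product-like structure (replacing the product-structure assumption on $\mathcal{E}$ used earlier), and we invoke Lemma \ref{bessel} in dimension $d=4$, whose Bessel exponent is $(d-1)/2=3/2$ rather than $1$.

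First, cover $\tilde{\mathcal{E}}^{\gamma,j}_k(u)$ by a collection $\mathcal{Q}_{u,k,m}$ of almost-disjoint cubes in $\mathbb{R}^5$ of side $2^{m+5}$ for each $0\le m\le k+4$, and reduce to estimating, for each $Q$, the sum
\[
S_Q=\sum_{\substack{(y,r),(y',r')\in\tilde{\mathcal{E}}^{\gamma,j}_k(u)\cap Q^*\\ 2^m\le|(y,r)-(y',r')|\le 2^{m+1}}}|\gamma(y,r)\overline{\gamma(y',r')}|\,|\langle F_{y,r},F_{y',r'}\rangle|,
\]
where the weight $|\gamma(y,r)\overline{\gamma(y',r')}|\approx 2^{2j}$ factors out in front. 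Decompose $\tilde{\mathcal{E}}^{\gamma,j}\cap Q^*$ as a disjoint union indexed by the dyadic level $l_2$ of $\gamma_2(r)$; on each slice the $y$-coordinates automatically lie in the level set $\{y:\gamma_1(y)\approx 2^{j-l_2}\}$, so the slice has the product form $\mathcal{Y}^{j-l_2}\times\mathcal{R}^{l_2}$. This is the substitute for the product hypothesis on $\mathcal{E}$ used in the 3D argument.

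Next, prove two alternative bounds for $S_Q$ in parallel with (\ref{rand3}) and (\ref{rand4}), chosen according to whether the number $N_{R,Q}$ of distinct radii in $Q^*$ is small or large. The small-$N_{R,Q}$ bound requires a 4D analogue of (\ref{Kest}), namely
\[
K^*(Q,s)\lesssim \max\bigl(u\cdot 2^m\cdot N_{Y,Q}^{5/3}\cdot 2^{-2s},\ u\cdot 2^{m/2}\cdot N_{Y,Q}\cdot 2^{-s}\bigr)
\]
on each tensor slice. To derive this, cover the 4D $y$-region by balls of radius $2^l$, count many-fold tangencies by applying Lemma \ref{geomlemma} to three-dimensional cross-sections (the 3-sphere annuli in $\mathbb{R}^4$ are sliced to 3D annuli using the affine subspace structure furnished by the tensor hypothesis), and optimize in $l$. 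Then sum over $Q\in\mathcal{Q}_{u,k,m}$ and over $m\ge u^a$, and handle $2^m\le u^a$ by splitting into dyadic radius intervals of length $u^a$ and invoking Plancherel with $\|\widehat{\sigma_r}\widehat{\psi_0}\|_\infty\lesssim r^{3/2}$ in four dimensions, exactly as in (\ref{aest}). Balancing the three terms in $u$ forces the shift from $a=11/13$ (3D) to $a=11/18$ (4D); the modified arithmetic in (\ref{b3})--(\ref{b4}) is driven by the new Bessel exponent $3/2$ and the weight $2^{3k}$ replacing $2^{2k}$. The weight $\sum_{l\ge j}2^{(l-j)/10}\#\mathcal{E}_k^{\gamma,l}$ on the right-hand side arises from summing over the tensor-slice levels with a small geometric loss, a bookkeeping step.

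\emph{Main obstacle.} I expect the crux to be the 4D version of the $K^*$ estimate. The 3D proof applied Lemma \ref{geomlemma} directly because both the annuli and the ambient space were three-dimensional, while in 4D the 3-sphere annuli live in $\mathbb{R}^4$ and the lemma does not apply without preparation. The tensor-product hypothesis must be leveraged to slice these 4D annuli down to the three-dimensional annuli covered by Lemma \ref{geomlemma}. A secondary difficulty is tracking the weight $2^{(l-j)/10}$ cleanly across the tensor decomposition so that summation over $l\ge j$ costs no more than an $\epsilon$-power of $u$.
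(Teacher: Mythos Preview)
Your high-level outline matches the paper's, but there are two concrete gaps that would derail the argument as written.

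\textbf{The $K^*$ estimate is overclaimed.} You assert the same bound as in three dimensions,
\[
K^*(Q,s)\lesssim \max\bigl(u\, 2^{m}\, N_{Y,Q}^{5/3}\, 2^{-2s},\ u\, 2^{m/2}\, N_{Y,Q}\, 2^{-s}\bigr),
\]
but this is false in $\mathbb{R}^4$. The four-dimensional triple-intersection bound (Corollary~\ref{geomcor}) is $\lesssim 2^{3(\alpha-l)}2^{\alpha}$, carrying an extra factor of the radius $2^{\alpha}$ compared with Lemma~\ref{geomlemma}. Tracing this through the optimization in $l$ replaces $2^m$ by $2^{4m/3}$ in the first branch; the paper's correct version is (\ref{Kest}). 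With your too-strong bound the arithmetic in (\ref{b3})--(\ref{b4}) would not balance at $a=11/18$; you would instead be claiming a better exponent than the method actually yields.

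\textbf{The tensor hypothesis is not what reduces the geometry to three dimensions.} You write that the $3$-sphere annuli in $\mathbb{R}^4$ are ``sliced to 3D annuli using the affine subspace structure furnished by the tensor hypothesis,'' but the tensor condition $\gamma(y,r)=\gamma_1(y)\gamma_2(r)$ imposes no affine structure whatsoever on the $y$-variable in $\mathbb{R}^4$. The slicing in Corollary~\ref{geomcor} is purely geometric: one fixes a hyperplane through the three centers and integrates the 3D bound of Lemma~\ref{geomlemma} over parallel hyperplanes. The tensor hypothesis enters elsewhere---it guarantees that each level set $\mathcal{E}_k^{\gamma,j,b}$ is a genuine product $Y\times R$, which is what drives the density relation (\ref{ess}) and the two competing estimates (\ref{rand3})--(\ref{rand4}).

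\textbf{On the weight $2^{(l-j)/10}$.} This is not merely ``bookkeeping.'' After decomposing into the product slices $\mathcal{E}_k^{\gamma,j,b}$, the paper must sum over pairs $(b,b')$, and the number of relevant $b$'s in a cube $Q^*$ is not a priori bounded. The paper groups the $b$'s by the pair $(c,d)$ recording $\#(\mathcal{E}_k^{\gamma,j,b}\cap Q^*)\approx 2^c$ and $N_{Y,Q,b}/N_{R,Q,b}\approx 2^d$, and observes that if $\#\mathcal{B}_{Q,c,d}$ is large (say $\gg m\log u$), then one can find $y$'s and $r$'s from widely separated $b$-levels whose product $\gamma_1(y)\gamma_2(r)$ lands in a much higher level set $\mathcal{E}_k^{\gamma,l}$ with $l-j\gtrsim \#\mathcal{B}_{Q,c,d}$. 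This combinatorial step is what converts the factor $(\#\mathcal{B}_{Q,c,d})^2$ into $\sum_{l\ge j}2^{(l-j)/10}\#\mathcal{E}_k^{\gamma,l}$; it is the genuinely new idea over the 3D proof and is not captured by your description.
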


Fix $k$ and $u$. As in \cite{hns}, we first observe that for $(y, r), (y^{\prime}, r^{\prime})\in\tilde{\mathcal{E}}_k^{\gamma, j}(u)\cap\mathcal{E}_k^{\gamma, j}$, we have $\left<F_{y, r}, F_{y^{\prime}, r^{\prime}}\right>=0$ unless $|(y, r)-(y^{\prime}, r^{\prime})|\le 2^{k+5}$. To estimate $\Norm{\tilde{G}_{u, k}^{\gamma, \mathcal{E}^{\gamma, j}}}_2^2$ for a fixed $k$, we would thus like to bound
\begin{align*}
2^{2j}\sum_{\substack{(y, r), (y^{\prime}, r^{\prime})\in\tilde{\mathcal{E}}_k^{\gamma, j}(u)\cap\mathcal{E}_k^{\gamma, j}\\ 2^{m}\le|(y, r)-(y^{\prime}, r^{\prime})|\le 2^{m+1}}}|\left<F_{y, r}, F_{y^{\prime}, r^{\prime}}\right>|
\end{align*}
for all $0\le m\le k+4$.
\newline
\indent 
Now fix $m\le k+4$. Let $\mathcal{Q}_{u, j, k, m}$ be a collection of almost disjoint cubes $Q\subset\mathbb{R}^5$ of sidelength $2^{m+5}$ such that $\tilde{\mathcal{E}}_k^{\gamma, j}(u)\cap\mathcal{E}_k^{\gamma, j}\subset\bigcup_{Q\in\mathcal{Q}_{u, k, j, m}}Q$ and so that every $Q$ has nonempty intersection with $\tilde{\mathcal{E}}_k^{\gamma, j}(u)\cap\mathcal{E}_k^{\gamma, j}$. Let $Q^{\ast}$ denote the $2^{5}$-dilate of $Q$ and $\mathcal{Q}_{u, k, j, m}^{\ast}$ the corresponding collection of dilated cubes. Observe that
\begin{multline}\label{rand1}
\Norm{\tilde{G}_{u, k}^{\gamma, \mathcal{E}^{\gamma, j}}}_2^2\lesssim 2^{2j}\sum_{0\le m\le k+4}\bigg(\sum_{\substack{(y, r), (y^{\prime}, r^{\prime})\in\tilde{\mathcal{E}}_k^{\gamma, j}(u)\cap\mathcal{E}_k^{\gamma, j}\\ 2^{m}\le|(y, r)-(y^{\prime}, r^{\prime})|\le 2^{m+1}}}|\left<F_{y, r}, F_{y^{\prime}, r^{\prime}}\right>|
\\
+\sum_{(y, r)\in\tilde{\mathcal{E}}_k^{\gamma, j}(u)\cap\mathcal{E}_k^{\gamma, j}}\Norm{F_{y, r}}_2^2\bigg)
\\
\lesssim 2^{2j}\sum_{0\le m\le k+4}\bigg(\sum_{Q\in\mathcal{Q}_{u, k, j, m}}\bigg(\sum_{\substack{(y, r), (y^{\prime}, r^{\prime})\in(\tilde{\mathcal{E}}_k^{\gamma, j}(u)\cap\mathcal{E}_k^{\gamma, j}\cap Q^{\ast})\\2^{m}\le|(y, r)-(y^{\prime}, r^{\prime})|\le 2^{m+1}}}|\left<F_{y, r}, F_{y^{\prime}, r^{\prime}}\right>|\bigg)
\\
+\sum_{(y, r)\in\tilde{\mathcal{E}}_k^{\gamma, j}(u)\cap\mathcal{E}_k^{\gamma, j}}\Norm{F_{y, r}}_2^2\bigg).
\end{multline} 
\indent 
For each integer $b\in\mathbb{Z}$ define 
\begin{align*}
\mathcal{E}_k^{\gamma, j, b}:=\{(y, r)\in\mathcal{Y}\times\mathcal{R}_k:\,2^{b-3}\le\gamma_1(y)\le2^{b+3}, 
2^{j-b-3}\le\gamma_2(r)\le 2^{j-b+3}\}.
\end{align*}
Note that
\begin{align*}
\mathcal{E}_k^{\gamma, j}\subset\bigcup_{b\in\mathbb{Z}}\mathcal{E}_k^{\gamma, j, b}\subset\tilde{\mathcal{E}}_k^{\gamma, j}.
\end{align*}
Note also that each set $\mathcal{E}_k^{\gamma, j, b}$ is a product, that is a set of the form $Y\times R$ where $Y\subset\mathcal{Y}$ and $R\subset\mathcal{R}$. It follows that $\mathcal{E}_k^{\gamma, j, b}\cap Q$ is a product for any cube $Q\subset\mathbb{R}^{d+1}$.
\newline
\indent
We also define some parameters associated with a fixed $Q\in\mathcal{Q}_{u, k, j, m}$ and $b\in\mathbb{Z}$. Let $N_{R, Q, b}$ be the cardinality of the $\mathcal{R}$-projection of $\mathcal{E}_k^{\gamma, j, b}\cap Q^{\ast}$, i.e.
\begin{align*}
N_{R, Q, b}:=\#((\mathcal{E}_k^{\gamma, j, b}\cap Q^{\ast})_R)=\#\{r: \exists (y, r)\in\mathcal{E}_k^{\gamma, j, b}\cap Q^{\ast}\}.
\end{align*}
Similarly define
\begin{align*}
N_{Y, Q, b}:=\#((\mathcal{E}_k^{\gamma, j, b}\cap Q^{\ast})_Y)=\#\{y: \exists (y, r)\in\mathcal{E}_k^{\gamma, j, b}\cap Q^{\ast}\}.
\end{align*}
We also note the following important observation which we will use repeatedly. Using the definition of the sets $\tilde{\mathcal{E}}_k^{\gamma, j}(u)$ and the fact that for each $b\in\mathbb{Z}$, $\mathcal{E}_k^{\gamma, j, b}$ has product structure, one may see that if $Q\in\mathcal{Q}_{u, k, j, m}$ is such that $\tilde{\mathcal{E}}_k^{\gamma, j}(u)\cap\mathcal{E}_k^{\gamma, j, b}\cap Q^{\ast}$ is nonempty, then 
\begin{align}\label{ess}
|N_{Y, Q, b}\cdot N_{R, Q, b}|\lesssim\#(\mathcal{E}_k^{\gamma, j, b}\cap Q^{\ast})\lesssim \#(\tilde{\mathcal{E}}_k^{\gamma, j}\cap Q^{\ast})\lesssim u2^m.
\end{align}
\indent
Now, we will organize our sets $\mathcal{E}_k^{\gamma, j, b}$ as follows. For a fixed $m$, given $Q\in\mathcal{Q}_{u, k, j, m}$, we would like to group together those $b\in\mathbb{Z}$ for which $\#(\mathcal{E}_k^{\gamma, j, b}\cap Q)$ has essentially equal cardinality and for which the ratio $N_{Y, Q, b}/N_{R, Q, b}$ is essentially equal. For each pair of integers $(c, d)\in\mathbb{Z}^2$, we define
\begin{align*}
\mathcal{B}_{Q, c, d}:=\{b\in\mathbb{Z}:\,2^{c-1}\le\#(\mathcal{E}_k^{\gamma, j, b}\cap Q^{\ast})<2^c,\,2^{d-1}\le N_{Y, Q, b}/N_{R, Q, b}<2^d\}.
\end{align*}
\indent
Now with (\ref{rand1}) in mind, we will prove the following lemma. 

\begin{lemma}\label{rand2}
 For each $Q\in\mathcal{Q}_{u, k, j, m}$ and each quadruple $(c, d, c^{\prime}, d^{\prime})\in\mathbb{Z}^4$, we have the estimates
\begin{multline}\label{rand3}
\sum_{\substack{(y, r)\in\bigcup_{b\in\mathcal{B}_{Q, c, d}}(\mathcal{E}_k^{\gamma, j, b}\cap\tilde{\mathcal{E}}_k^{\gamma, j}(u)\cap Q^{\ast})\\
 (y^{\prime}, r^{\prime})\in\bigcup_{b\in\mathcal{B}_{Q, c^{\prime}, d^{\prime}}}(\mathcal{E}_k^{\gamma, j, b}\cap\tilde{\mathcal{E}}_k^{\gamma, j}(u)\cap Q^{\ast})
\\2^{m}\le|(y, r)-(y^{\prime}, r^{\prime})|\le 2^{m+1}}}|\left<F_{y, r}, F_{y^{\prime}, r^{\prime}}\right>|
\\
\lesssim 2^{\max((c-d)/2, (c^{\prime}-d^{\prime})/2)}(\max(\#\mathcal{B}_{Q, c, d}, \#\mathcal{B}_{Q, c^{\prime}, d^{\prime}}))^22^{\max(c, c^{\prime})}
\\
\times 2^{3(k-m/2)}(m\log(u))\max(u^{5/6}2^{m}, u2^{m/2})
\end{multline}
and
\begin{multline}\label{rand4}
\sum_{\substack{(y, r)\in\bigcup_{b\in\mathcal{B}_{Q, c, d}}(\mathcal{E}_k^{\gamma, j, b}\cap\tilde{\mathcal{E}}_k^{\gamma, j}(u)\cap Q^{\ast})\\
 (y^{\prime}, r^{\prime})\in\bigcup_{b\in\mathcal{B}_{Q, c^{\prime}, d^{\prime}}}(\mathcal{E}_k^{\gamma, j, b}\cap\tilde{\mathcal{E}}_k^{\gamma, j}(u)\cap Q^{\ast})
\\2^{m}\le|(y, r)-(y^{\prime}, r^{\prime})|\le 2^{m+1}}}|\left<F_{y, r}, F_{y^{\prime}, r^{\prime}}\right>|
\\
\lesssim 2^{3(k-m/2)}2^{\max(c, c^{\prime})}(\max(\#\mathcal{B}_{Q, c, d}, \#\mathcal{B}_{Q, c^{\prime}, d^{\prime}}))^2\\\times u2^m(2^{\max((c-d)/2, (c^{\prime}-d^{\prime})/2)})^{-1}.
\end{multline}
\end{lemma}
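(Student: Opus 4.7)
The plan is to mirror the strategy of the three-dimensional Lemma~\ref{rand2}, adapting to four dimensions and to the ``partial product'' organization provided by the sets $\mathcal{B}_{Q,c,d}$. Write $\beta:=\max(\#\mathcal{B}_{Q,c,d},\#\mathcal{B}_{Q,c',d'})$. Estimate (\ref{rand3}) will be the sharper bound when the radial cardinalities are small, and (\ref{rand4}) when they are large; the geometric input from Lemma~\ref{geomlemma} is needed only for the former.

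First I would expand the double sum over $b\in\mathcal{B}_{Q,c,d}$ and $b'\in\mathcal{B}_{Q,c',d'}$, bounding it trivially by $\beta^2$ times the worst single-pair contribution. For a fixed pair $(b,b')$ the set $\mathcal{E}_k^{\gamma,j,b}\cap Q^\ast$ is a genuine product $Y_b\times R_b$ with $N_{Y,Q,b}\approx 2^{(c+d)/2}$ and $N_{R,Q,b}\approx 2^{(c-d)/2}$ (and similarly for $b'$). Picking out representative radii $r_1\in R_b$, $r_2\in R_{b'}$ and bounding uniformly in the choice costs a factor $N_{R,Q,b}N_{R,Q,b'}$, reducing matters to estimating $S(r_1,r_2):=\sum_{y\in Y_b,\,y'\in Y_{b'}}|\langle F_{y,r_1},F_{y',r_2}\rangle|$ restricted to $2^m\le|(y,r_1)-(y',r_2)|\le 2^{m+1}$. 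Lemma~\ref{bessel} in dimension $d=4$ then gives $|\langle F_{y,r_1},F_{y',r_2}\rangle|\lesssim 2^{3(k-m/2)}\sum_{\pm,\pm}(1+|r_1\pm r_2\pm|y-y'||)^{-N}$, so only those $y'$ lying in an $O(1)$-thickening of one of two $3$-spheres around $y$, of radii $|r_1-r_2|$ and $r_1+r_2$, contribute non-negligibly.

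To prove (\ref{rand3}) I would introduce, for each $t\le 2^{m+10}$ and $s\ge 0$, the tangency count
\begin{align*}
K_{b,b'}(Q,s,t):=\#\bigl\{y\in Y_b:\#\{y'\in Y_{b'}:\bigl||y-y'|-(t+1.5)\bigr|\le 1.5\}\ge 2^s\bigr\},
\end{align*}
and its maximum $K^\ast_{b,b'}(Q,s):=\max_{t}K_{b,b'}(Q,s,t)$, so that $S(r_1,r_2)\lesssim 2^{3(k-m/2)}\sum_{2^s\lesssim N_{Y,Q,b}}K^\ast_{b,b'}(Q,s)\,2^s$. To bound $K^\ast_{b,b'}(Q,s)$, I would cover $Y_b$ by $4$-dimensional balls of radius $2^j$ (where $t\approx 2^j$), sub-cover each by balls of radius $2^l$ with $l\ge j/2+C$, use the density bound in $\mathbb{R}^5$ to conclude that each such small ball contains $\lesssim u\cdot 2^l$ points of the underlying high-density set, and then apply Lemma~\ref{geomlemma} to the resulting configuration of $3$-spheres. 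Since that lemma is stated for three-dimensional annuli, the plan is to invoke it slice by slice in the $y_4$ coordinate: fixing $y_4$ and $y'_4$ reduces $|y-y'|=t$ to a two-sphere condition in the resulting $3$-dimensional slice with modified radius $\sqrt{t^2-(y_4-y'_4)^2}$, and integration of the per-slice three-fold intersection bound back out to four dimensions should yield
\begin{align*}
K^\ast_{b,b'}(Q,s)\lesssim\max\bigl[u\,2^m N_{Y,Q,b}^{5/3}2^{-2s},\ u\,2^{m/2}N_{Y,Q,b}2^{-s}\bigr].
\end{align*}
The geometric-series balance in $s$ used in the three-dimensional argument, followed by substitution of the four-dimensional density estimate $N_{Y,Q,b}\lesssim u\,2^m$, will then produce $S(r_1,r_2)\lesssim 2^{3(k-m/2)}N_{Y,Q,b}(m\log u)\max(u^{5/6}2^m,u\,2^{m/2})$; multiplication by $N_{R,Q,b}N_{R,Q,b'}$ together with the outer factor $\beta^2$ yields (\ref{rand3}).

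The estimate (\ref{rand4}) needs no geometric input: for each fixed $(y,r)$ and each fixed $y'\in Y_{b'}$, Lemma~\ref{bessel} forces at most two values of $r'$ to contribute non-negligibly, each of magnitude $\lesssim 2^{3(k-m/2)}$. Summing over $(y,r)\in\mathcal{E}_k^{\gamma,j,b}\cap Q^\ast$ and $y'\in Y_{b'}$, invoking $N_{Y,Q,b'}\lesssim u\,2^m/N_{R,Q,b'}$, choosing the symmetric configuration (interchanging the roles of $b$ and $b'$) which minimizes the final constant, and then collecting the $\beta^2$ factor, produces (\ref{rand4}). The main technical obstacle is the slicing reduction for the geometric lemma: one must verify that a generic three-dimensional slice of a configuration of $3$-spheres in $\mathbb{R}^4$ preserves the three-fold intersection bound with only acceptable loss, and that integrating over slices produces precisely the exponent $2^m$ in the final estimate, rather than the strictly weaker $2^{5m/6}$ that appeared in the three-dimensional result.
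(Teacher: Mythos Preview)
Your overall strategy matches the paper's, and (\ref{rand4}) is handled exactly as you describe. The gap is in your treatment of (\ref{rand3}), specifically in the geometric step and the resulting $K$-bound.

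First, the slicing you propose is not the right one. You suggest fixing $y_4$ and $y'_4$ separately to reduce a single sphere condition $|y-y'|=t$ to a $2$-sphere in a $3$-slice. But the geometric input is a \emph{triple} intersection bound for three $4$-dimensional annuli centered at three distinct points of $\mathbb{R}^4$; fixing a coordinate of $y$ and of $y'$ does not simultaneously slice all three annuli. The paper instead proves a direct $4$-dimensional corollary (Corollary~\ref{geomcor}): since any three centers lie in a common hyperplane $P$, one slices all three annuli by hyperplanes $P_t$ parallel to $P$, applies the $3$-dimensional Lemma~\ref{geomlemma} on each slice, and integrates over $t$. This produces the bound $|A_1\cap A_2\cap A_3|\lesssim 2^{3(\alpha-l)}\cdot 2^{\alpha}$, i.e.\ an extra factor of $2^{\alpha}$ compared with the three-dimensional case.

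Second, that extra $2^{\alpha}$ changes the $K$-bound: tracing through the optimization in $l$ exactly as in three dimensions but with the triple-intersection estimate $2^{3(\alpha-l)}2^{\alpha}$ in place of $2^{3(\alpha-l)}$ gives
\[
K^{\ast}_{b,b'}(Q,s)\lesssim \max\bigl[u\,2^{4m/3}N_{Y,Q,b,b'}^{5/3}2^{-2s},\ u\,2^{m/2}N_{Y,Q,b,b'}2^{-s}\bigr],
\]
with $2^{4m/3}$ rather than the $2^{m}$ you wrote. It is precisely this corrected exponent that, after the dyadic sum in $s$ and the substitution $N_{Y,Q,b,b'}\lesssim u2^{m}$, yields the factor $\max(u^{5/6}2^{m},u2^{m/2})$ in (\ref{rand3}); the shift from $2^{5m/6}$ to $2^{m}$ is not a separate ``integration over slices'' correction applied at the end, but is already encoded in the $4$-dimensional triple-intersection bound. (The paper also restricts to $s>m+100$ to guarantee the optimizing scale $l$ is admissible, and handles the remaining small-$s$ range trivially.)
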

Notice that (\ref{rand3}) is the better estimate when $2^{\max((c-d)/2, (c^{\prime}-d^{\prime})/2)}$ is small and (\ref{rand4}) is the better estimate when $2^{\max((c-d)/2, (c^{\prime}-d^{\prime})/2)}$ is large. We will use (\ref{rand3}) when $2^{\max((c-d)/2, (c^{\prime}-d^{\prime})/2)}\le u^{1/12}$ and (\ref{rand4}) when $2^{\max((c-d)/2, (c^{\prime}-d^{\prime})/2)}> u^{1/12}$. This yields the following corollary.
\begin{corollary}\label{randcor1}
\begin{align}\label{rand5}
\sum_{\substack{(y, r)\in\bigcup_{b\in\mathcal{B}_{Q, c, d}}(\mathcal{E}_k^{\gamma, j, b}\cap\tilde{\mathcal{E}}_k^{\gamma, j}(u)\cap Q^{\ast})\\
 (y^{\prime}, r^{\prime})\in\bigcup_{b\in\mathcal{B}_{Q, c^{\prime}, d^{\prime}}}(\mathcal{E}_k^{\gamma, j, b}\cap\tilde{\mathcal{E}}_k^{\gamma, j}(u)\cap Q^{\ast})
\\2^{m}\le|(y, r)-(y^{\prime}, r^{\prime})|\le 2^{m+1}}}|\left<F_{y, r}, F_{y^{\prime}, r^{\prime}}\right>|
\lesssim_{\epsilon} I+II,
\end{align}
where
\begin{multline}
I:=2^{3k}2^{\max(c, c^{\prime})}(\max(\#\mathcal{B}_{Q, c, d}, \#\mathcal{B}_{Q, c^{\prime}, d^{\prime}}))^2
\\
\times u^{\epsilon}2^{m\epsilon}\max(u^{11/12}2^{-m/2}, u^{13/12}2^{-m})
\end{multline}
and
\begin{multline}
II:=2^{3k}2^{\max(c, c^{\prime})}(\max(\#\mathcal{B}_{Q, c, d}, \#\mathcal{B}_{Q, c^{\prime}, d^{\prime}}))^2u^{\epsilon}2^{m\epsilon}u^{11/12}2^{-m/2}.
\end{multline}
\end{corollary}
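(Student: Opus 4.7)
The plan is to deduce Corollary \ref{randcor1} from Lemma \ref{rand2} by simply choosing the better of the two estimates based on the size of the parameter $D := 2^{\max((c-d)/2, (c^{\prime}-d^{\prime})/2)}$. Inspecting (\ref{rand3}) and (\ref{rand4}), one sees that (\ref{rand3}) scales linearly in $D$ while (\ref{rand4}) scales like $D^{-1}$, so the two bounds balance precisely when $D$ is of the order of a specific power of $u$. A short balancing calculation comparing the $u$-powers inside $\max(u^{5/6} 2^m, u 2^{m/2})$ against $u/D$ reveals that $D \approx u^{1/12}$ is the correct crossover, independent of $m$ (up to $u^\epsilon 2^{m\epsilon}$ factors).

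First I would treat the regime $D \le u^{1/12}$ and apply (\ref{rand3}). In this case the prefactor $D \cdot \max(u^{5/6} 2^m, u 2^{m/2})$ is bounded by $\max(u^{11/12} 2^m, u^{13/12} 2^{m/2})$. After multiplying by the remaining geometric factor $2^{3(k-m/2)}$ this becomes $2^{3k}\max(u^{11/12} 2^{-m/2}, u^{13/12} 2^{-m})$, with the combinatorial factor $2^{\max(c,c')}(\max(\#\mathcal{B}_{Q,c,d}, \#\mathcal{B}_{Q,c',d'}))^2$ outside. The logarithmic factors $m \log(u)$ that appear in Lemma \ref{rand2} are absorbed into $u^\epsilon 2^{m\epsilon}$ (this is the only place the $\epsilon$ in the corollary is used). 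This gives exactly the term $I$.

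Next I would treat the regime $D > u^{1/12}$ and apply (\ref{rand4}). Here $u/D < u^{11/12}$, so the bound reduces to $2^{3(k-m/2)} \cdot 2^{\max(c,c')}(\max(\#\mathcal{B}_{Q,c,d}, \#\mathcal{B}_{Q,c',d'}))^2 \cdot u^{11/12} 2^{m}$, which after combining the $2^{-3m/2}$ and $2^{m}$ factors equals $2^{3k} \cdot 2^{\max(c,c')}(\max \#\mathcal{B})^2 \cdot u^{11/12} 2^{-m/2}$. Padding trivially by $u^\epsilon 2^{m\epsilon}$ produces exactly the term $II$. Taking the sum $I + II$ covers both cases and yields (\ref{rand5}).

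There is no conceptual obstacle here: since the corollary is essentially just the algebraic interpolation of the two bounds of Lemma \ref{rand2} at their balancing point, the only thing to verify is that the threshold $u^{1/12}$ really is where the $D$-linear and $D^{-1}$ terms cross. The one minor bookkeeping point to watch is that in the $D \le u^{1/12}$ case one must verify that both branches of the inner $\max(u^{5/6} 2^m, u 2^{m/2})$ lead to contributions absorbed by the $\max(u^{11/12} 2^{-m/2}, u^{13/12} 2^{-m})$ appearing in $I$, which is immediate.
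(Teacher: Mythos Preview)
Your proposal is correct and matches the paper's approach exactly: the paper also splits into the cases $2^{\max((c-d)/2,(c'-d')/2)}\le u^{1/12}$ (apply (\ref{rand3})) and $>u^{1/12}$ (apply (\ref{rand4})), and the arithmetic you carry out to obtain $I$ and $II$ is precisely what is intended. The absorption of the $m\log(u)$ factor into $u^\epsilon 2^{m\epsilon}$ is also how the $\epsilon$ enters in the paper.
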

Now note that if $(\max(\#\mathcal{B}_{Q, c, d}, \#\mathcal{B}_{Q, c^{\prime}, d^{\prime}}))>10000m\log(u)$, then for some $l$ such that $l>j+\frac{1}{10}(\max(\#\mathcal{B}_{Q, c, d}, \#\mathcal{B}_{Q, c^{\prime}, d^{\prime}}))$, we have $\#(Q^{\ast}\cap\mathcal{E}_k^{\gamma, l})\gtrsim 2^{\max(c, c^{\prime})}$. Since $2^{\max(c, c^{\prime})}\lesssim\#(Q^{\ast}\cap\mathcal{E}_k^{\gamma, j})\lesssim u2^m$, this implies that 
\begin{align*}
2^{\max(c, c^{\prime})}(\max(\#\mathcal{B}_{Q, c, d}, \#\mathcal{B}_{Q, c^{\prime}, d^{\prime}}))^2\lesssim 2^{(l-j)/10}\#(\mathcal{E}_k^{\gamma, l}\cap Q^{\ast}).
\end{align*}
Thus Corollary \ref{randcor1} implies the following.
\begin{corollary}\label{randcor2}
\begin{align}\label{rand6}
\sum_{\substack{(y, r)\in\bigcup_{b\in\mathcal{B}_{Q, c, d}}(\mathcal{E}_k^{\gamma, j, b}\cap\tilde{\mathcal{E}}_k^{\gamma, j}(u)\cap Q^{\ast})\\
 (y^{\prime}, r^{\prime})\in\bigcup_{b\in\mathcal{B}_{Q, c^{\prime}, d^{\prime}}}(\mathcal{E}_k^{\gamma, j, b}\cap\tilde{\mathcal{E}}_k^{\gamma, j}(u)\cap Q^{\ast})
\\2^{m}\le|(y, r)-(y^{\prime}, r^{\prime})|\le 2^{m+1}}}|\left<F_{y, r}, F_{y^{\prime}, r^{\prime}}\right>|
\lesssim I+II,
\end{align}
where
\begin{multline}
I:=2^{3k}\sum_{l\ge j}2^{(l-j)/10}\#(\mathcal{E}_k^{\gamma, l}\cap Q^{\ast})
\\
\times u^{\epsilon}2^{m\epsilon}\max(u^{11/12}2^{-m/2}, u^{13/12}2^{-m})
\end{multline}
and
\begin{align}
II:=2^{3k}\sum_{l\ge j}2^{(l-j)/10}\#(\mathcal{E}_k^{\gamma, l}\cap Q^{\ast})u^{\epsilon}2^{m\epsilon}
u^{11/12}2^{-m/2}.
\end{align}
\end{corollary}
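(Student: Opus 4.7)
The plan is to deduce Corollary \ref{randcor2} from Corollary \ref{randcor1} by combining it with the pigeonhole-type observation stated immediately before the corollary statement. Setting $M := \max(\#\mathcal{B}_{Q, c, d}, \#\mathcal{B}_{Q, c', d'})$, it suffices to establish
\[
2^{\max(c, c')} M^2 \;\lesssim_{\epsilon}\; u^{\epsilon} 2^{m\epsilon}\sum_{l\ge j} 2^{(l-j)/10}\,\#(Q^{\ast}\cap\mathcal{E}_k^{\gamma, l}),
\]
since substituting this into the combinatorial factor $2^{\max(c, c')}(\max(\#\mathcal{B}_{Q, c, d}, \#\mathcal{B}_{Q, c', d'}))^2$ appearing in $I$ and $II$ of Corollary \ref{randcor1}, and subsequently relabelling $\epsilon$, reproduces precisely $I$ and $II$ of Corollary \ref{randcor2}.

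I would prove this by dichotomy on $M$. In the small regime $M\le 10000\, m\log u$, the quantity $M^2$ is polynomial in $m$ and $\log u$, hence $\lesssim_{\epsilon} u^{\epsilon}2^{m\epsilon}$; the factor $2^{\max(c, c')}$ is bounded by $\#(\tilde{\mathcal{E}}_k^{\gamma, j}\cap Q^{\ast})$, since for any $b$ in $\mathcal{B}_{Q, c, d}\cup\mathcal{B}_{Q, c', d'}$ one has $\max(2^c, 2^{c'})\approx\#(\mathcal{E}_k^{\gamma, j, b}\cap Q^{\ast})\le\#(\tilde{\mathcal{E}}_k^{\gamma, j}\cap Q^{\ast})$, and this in turn decomposes as a sum of $\#(\mathcal{E}_k^{\gamma, l}\cap Q^{\ast})$ over the bounded range $|l-j|=O(1)$, which is majorized by the full series $\sum_{l\ge j}2^{(l-j)/10}\#(\mathcal{E}_k^{\gamma, l}\cap Q^{\ast})$ after a harmless index shift absorbed into the implied constant.

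In the large regime $M > 10000\, m\log u$, I would exploit the product structure $\mathcal{E}_k^{\gamma, j, b}\cap Q^{\ast} = Y_b\times R_b$ with $|Y_b|\approx 2^{(c+d)/2}$ and $|R_b|\approx 2^{(c-d)/2}$. Enumerating $\mathcal{B}_{Q, c, d} = \{b_1 < \cdots < b_N\}$, the cross-product $Y_{b_N}\times R_{b_1}$ lies in $Q^{\ast}$, consists of $\approx 2^c$ pairs $(y, r)$, and each point satisfies $\gamma(y, r)=\gamma_1(y)\gamma_2(r)\approx 2^{j+(b_N-b_1)}$, so that $\#(Q^{\ast}\cap\mathcal{E}_k^{\gamma, l})\gtrsim 2^c$ at some $l\ge j + N - 1$. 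Performing this construction on whichever of the two collections simultaneously realizes $M$ and $\max(c, c')$ (and, in the mismatched sub-case, using the trivial near-$j$ bound of the first regime to absorb the leftover factor) yields an $l\ge j + M/10$ with $\#(Q^{\ast}\cap\mathcal{E}_k^{\gamma, l})\gtrsim 2^{\max(c, c')}$. Since $M > 10000\, m\log u$ forces $M$ to be large enough that $2^{M/100}\ge M^2$, we conclude $2^{(l-j)/10}\ge M^2$, which closes the estimate.

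The main obstacle is the case analysis required in the large-$M$ regime: $\max(c, c')$ and the collection attaining $M$ need not correspond to the same pair $((c, d), (c', d'))$, so one must split into sub-cases according to which of $\{(c, d), (c', d')\}$ realizes each extremum, perform the pigeonhole on the appropriate one, and absorb any residual mismatch via the trivial cardinality bound as in the small-$M$ regime.
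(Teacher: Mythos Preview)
Your approach is essentially the paper's: reduce Corollary~\ref{randcor2} to Corollary~\ref{randcor1} by bounding the combinatorial factor $2^{\max(c,c')}M^2$ via a dichotomy on $M=\max(\#\mathcal{B}_{Q,c,d},\#\mathcal{B}_{Q,c',d'})$. The small-$M$ regime is handled exactly as you describe and matches the paper.

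In the large-$M$ regime your cross-product construction $Y_{b_N}\times R_{b_1}$ is the right idea, but the conclusion you state---that one can always arrange $\#(Q^{\ast}\cap\mathcal{E}_k^{\gamma,l})\gtrsim 2^{\max(c,c')}$ with $l\ge j+M/10$---need not hold in the mismatched sub-case. If, say, $c'>c$ while $\#\mathcal{B}_{Q,c,d}=M$ and $\#\mathcal{B}_{Q,c',d'}=1$, the construction on $(c,d)$ only produces $\approx 2^c$ points at level $l\approx j+M$, and there may be no level $l>j+M/10$ with $\gtrsim 2^{c'}$ points at all; your suggested ``absorb via the near-$j$ bound'' does not recover the missing factor $2^{c'-c}$ from an additive term in the sum. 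The clean fix (which is what the paper's remark ``Since $2^{\max(c,c')}\lesssim u2^m$'' is pointing to) is not to aim for $2^{\max(c,c')}$ at all: the construction on whichever side realizes $M$ gives $l-j\ge M-1$ with $\#(\mathcal{E}_k^{\gamma,l}\cap Q^{\ast})\ge 1$, and since $2^{\max(c,c')}\lesssim u2^m$ and $M\lesssim u2^m$ (bounded overlap of the $\mathcal{E}_k^{\gamma,j,b}$ forces $M\cdot 2^{\min(c,c')}\lesssim u2^m$), one has $2^{\max(c,c')}M^2\lesssim (u2^m)^3$, whereas $2^{(l-j)/10}\ge 2^{(M-1)/10}$ dominates any fixed power of $u2^m$ once $M>10000\,m\log_2 u$ (when $m\log u\lesssim 1$ the threshold is vacuous and $M$ is absolutely bounded, falling back into the small-$M$ regime). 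With this adjustment no mismatched case analysis is needed and the deduction goes through as in the paper.
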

\indent
By (\ref{ess}) there are $\lesssim m^4\log(u)^4$ quadruples $(c, d, c^{\prime}, d^{\prime})$ for which both $\bigcup_{b\in\mathcal{B}_{Q, c, d}}(\mathcal{E}_k^{\gamma, j, b}\cap\tilde{\mathcal{E}}_k^{\gamma, j}(u)\cap Q^{\ast})$ and $\bigcup_{b\in\mathcal{B}_{Q, c^{\prime}, d^{\prime}}}(\mathcal{E}_k^{\gamma, j, b}\cap\tilde{\mathcal{E}}_k^{\gamma, j}(u)\cap Q^{\ast})$ are nonempty, so Corollary (\ref{randcor2}) implies the following.

\begin{corollary}\label{randcor3}
\begin{align}\label{rand7}
\sum_{\substack{(y, r), (y^{\prime}, r^{\prime})\in\bigcup_{b\in\mathbb{Z}}(\mathcal{E}_k^{\gamma, j, b}\cap\tilde{\mathcal{E}}_k^{\gamma, j}(u)\cap Q^{\ast})
\\2^{m}\le|(y, r)-(y^{\prime}, r^{\prime})|\le 2^{m+1}}}|\left<F_{y, r}, F_{y^{\prime}, r^{\prime}}\right>|
\lesssim I+II,
\end{align}
where
\begin{multline}\label{rand75}
I:=2^{3k}\sum_{l\ge j}2^{(l-j)/10}\#(\mathcal{E}_k^{\gamma, l}\cap Q^{\ast})
\\
\times u^{\epsilon}2^{m\epsilon}\max(u^{11/12}2^{-m/2}, u^{13/12}2^{-m})
\end{multline}
and
\begin{align}\label{rand8}
II:=2^{3k}\sum_{l\ge j}2^{(l-j)/10}\#(\mathcal{E}_k^{\gamma, l}\cap Q^{\ast})u^{\epsilon}2^{m\epsilon}
u^{11/12}2^{-m/2}.
\end{align}
\end{corollary}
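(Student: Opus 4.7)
The plan is to derive Corollary \ref{randcor3} from Corollary \ref{randcor2} by a routine bookkeeping argument that repackages the sum over $b\in\mathbb{Z}$ as a sum over the classes $\mathcal{B}_{Q,c,d}$. First I would decompose the index set according to the partition of $\mathbb{Z}$ into these classes, writing
\[
\bigcup_{b\in\mathbb{Z}}(\mathcal{E}_k^{\gamma, j, b}\cap\tilde{\mathcal{E}}_k^{\gamma, j}(u)\cap Q^{\ast}) = \bigcup_{(c,d)\in\mathbb{Z}^2}\bigcup_{b\in\mathcal{B}_{Q, c, d}}(\mathcal{E}_k^{\gamma, j, b}\cap\tilde{\mathcal{E}}_k^{\gamma, j}(u)\cap Q^{\ast}).
\]
Expanding the double sum on the left of (\ref{rand7}) along this decomposition produces a sum over pairs of classes indexed by quadruples $(c,d,c',d')\in\mathbb{Z}^4$, and to each such quadruple I would directly apply Corollary \ref{randcor2}. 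This yields, for every nonempty quadruple, a contribution bounded by precisely the expression $I+II$ of (\ref{rand75})--(\ref{rand8}).

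Next I would bound the number of quadruples contributing a nonzero term. By (\ref{ess}), if $b$ gives a nonempty intersection $\mathcal{E}_k^{\gamma,j,b}\cap\tilde{\mathcal{E}}_k^{\gamma,j}(u)\cap Q^{\ast}$, then $1\le \#(\mathcal{E}_k^{\gamma,j,b}\cap Q^{\ast})\lesssim u2^m$ and $1\le N_{Y,Q,b}\cdot N_{R,Q,b}\lesssim u2^m$. Since $N_{Y,Q,b}$ and $N_{R,Q,b}$ individually lie in $[1, u2^m]$, their ratio lies in $[(u2^m)^{-1}, u2^m]$. Consequently both $c$ and $|d|$ (and similarly $c'$ and $|d'|$) are $\lesssim m+\log u$, so the number of nonempty quadruples is $\lesssim (m+\log u)^4\lesssim m^4\log(u)^4$.

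Finally, I would multiply the per-quadruple bound obtained from Corollary \ref{randcor2} by this polylogarithmic factor. Since the right-hand sides (\ref{rand75})--(\ref{rand8}) already contain a factor $u^{\epsilon}2^{m\epsilon}$, the extra $(m+\log u)^4$ is readily absorbed into it upon shrinking $\epsilon$ (say replacing $\epsilon$ by $\epsilon/2$) and adjusting implicit constants, using the elementary inequality $(m+\log u)^4\lesssim_{\epsilon} u^{\epsilon/2}2^{m\epsilon/2}$ valid for all $m\ge 0$ and $u\ge 1$. This yields exactly the estimate (\ref{rand7}). I do not expect any substantive obstacle here; this corollary is essentially just a consolidation of Corollary \ref{randcor2} over the $O((m+\log u)^4)$ relevant parameter classes, with the modest counting loss absorbed into the $\epsilon$-slack already built into the bound.
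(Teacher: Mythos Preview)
Your proposal is correct and follows essentially the same approach as the paper: decompose the union over $b\in\mathbb{Z}$ into the classes $\mathcal{B}_{Q,c,d}$, apply Corollary \ref{randcor2} to each of the $\lesssim (m+\log u)^4$ nonempty quadruples $(c,d,c',d')$ (counted via (\ref{ess})), and absorb the resulting polylogarithmic loss into the factor $u^{\epsilon}2^{m\epsilon}$. The paper's one-line justification before the corollary statement is precisely this argument.
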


\begin{proof}[Proof of Lemma \ref{rand2}]
We will first prove (\ref{rand3}). Fix $b\in\mathcal{B}_{Q, c, d}$ and $b^{\prime}\in\mathcal{B}_{Q, c^{\prime}, d^{\prime}}$. Set 
\begin{align*}
N_{Y, Q, b, b^{\prime}}=\max(N_{Y, Q, b}, N_{Y, Q, b^{\prime}})\approx 2^{\max((c+d)/2, (c^{\prime}+d^{\prime})/2)}.
\end{align*}
It suffices to prove
\begin{multline}\label{rad1}
\sum_{\substack{(y, r)\in\mathcal{E}_k^{\gamma, j, b}\cap\tilde{\mathcal{E}}_k^{\gamma, j}(u)\cap Q^{\ast}\\(y^{\prime}, r^{\prime})\in\mathcal{E}_k^{\gamma, j, b^{\prime}}\cap\tilde{\mathcal{E}}_k^{\gamma, j}(u)\cap Q^{\ast}\\2^m\le |(y, r)-(y^{\prime}, r^{\prime})|\le 2^{m+1}}}|\left<F_{y, r}, F_{y^{\prime}, r^{\prime}}\right>|
\lesssim N_{R, Q, b}N_{R, Q, b^{\prime}}N_{Y, Q, b, b^{\prime}}
\\
\times 2^{3(k-m/2)}(m\log(u))\max(u^{5/6}2^{m}, u2^{m/2}).
\end{multline}
After incurring a factor of $N_{R, Q, b}N_{R, Q, b^{\prime}}$, to estimate the left hand side of (\ref{rad1}) it suffices to estimate for a fixed pair $r_1, r_2$
\begin{align}
\sum_{\substack{(y, r_1)\in(\mathcal{E}_k^{\gamma, j, b}\cap\tilde{\mathcal{E}}_k^{\gamma, j}(u)\cap Q^{\ast})^{\times}\\(y^{\prime}, r_2)\in(\mathcal{E}_k^{\gamma, j, b^{\prime}}\cap\tilde{\mathcal{E}}_k^{\gamma, j}(u)\cap Q^{\ast})^{\times}\\2^m\le |(y, r)-(y^{\prime}, r^{\prime})|\le 2^{m+1}}}|\left<F_{y, r}, F_{y^{\prime}, r^{\prime}}\right>|.
\end{align}
i.e. to restrict $(y, r)$ and $(y^{\prime}, r^{\prime})$ to lie in fixed rows of the product-extensions of $(\mathcal{E}_k^{\gamma, j, b}\cap\tilde{\mathcal{E}}_{k}^{\gamma, j}(u)\cap Q^{\ast})^{\times}$ and $(\mathcal{E}_k^{\gamma, j, b^{\prime}}\cap\tilde{\mathcal{E}}_{k}^{\gamma, j}(u)\cap Q^{\ast})^{\times}$, respectively. (Our estimates will not depend on the particular choice of $r_1$ and $r_2$.)
\newline
\indent
Now, referring to the estimate in Lemma \ref{bessel}, we see that for a fixed $y, r_1, r_2$ we have that $|\left<F_{y, r_1}, F_{y^{\prime}, r_2}\right>|$ decays rapidly as $y^{\prime}$ moves away from the set $\{y^{\prime}:\,|y-y^{\prime}|=|r_1-r_2|\text{ or }|y-y^{\prime}|=r_1+r_2\}$, which is contained in a union of two annuli of thickness $2$ and radii $|r_1-r_2|$ and $r_1+r_2$ centered at $y$. 
\newline
\indent
Let $s\ge 0$, fix $t\le 2^{m+10}$, and define $K_k^{\gamma, j, b, b^{\prime}}(Q, s, t)$ to be the number of points $y\in(\mathcal{E}_k^{\gamma, j, b}\cap\tilde{\mathcal{E}}_k^{\gamma, j}(u)\cap Q^{\ast})_Y$ such that there are $\ge 2^s$ many points $y^{\prime}\in(\mathcal{E}_k^{\gamma, j, b^{\prime}}\cap Q^{\ast})_Y$ such that $y^{\prime}$ lies in the annulus of inner radius $t$ and thickness $3$ centered at $y$. That is, define
\begin{align*}
K_k^{\gamma, j, b, b^{\prime}}(Q, s, t):=\#\{y\in(\mathcal{E}_k^{\gamma, j, b}\cap\tilde{\mathcal{E}}_k^{\gamma, j}(u)\cap Q^{\ast})_Y: \text{there exists at least }
\\2^s\text{ many points }
y^{\prime}\in(\mathcal{E}_k^{\gamma, j, b^{\prime}}\cap Q^{\ast})_Y\text{ such that }||y^{\prime}-y|-(t+1.5)|\le 1.5\}.
\end{align*}
In view of the observation in the previous paragraph, for a given $s$ sufficiently large but smaller than $u2^m$, say $s>m+100$, and a fixed number $t\le 2^{m+10}$, we would like to prove a bound on $K_k^{\gamma, j, b, b^{\prime}}(Q, s, t)$. 
Our bound will depend on $s$ and $m$ but be independent of the choice of $t\le 2^{m+10}$. For this reason, we define the quantity
\begin{align*}
K_k^{\gamma, j, b, b^{\prime}, \ast}(Q, s):=\max_{0 \le t\le 2^{m+10}}K_k^{\gamma, j, b, b^{\prime}}(Q, s, t).
\end{align*}
\iffalse{
Our bound for $K_k^{\gamma, j, b, b^{\prime}}(Q, s, t)$ will decay as $2^s$ gets larger and closer to $N_{Y, Q, b}$; in other words, ``most" of the points $y$ in $(\mathcal{E}_k^{\gamma, j, b}\cap\tilde{\mathcal{E}}_k^{\gamma, j}(u)\cap Q^{\ast})_Y$ cannot have a large proportion of other points in $(\mathcal{E}_k^{\gamma, j}\cap Q^{\ast})_Y$ lie in the annulus of inner radius $t$ and thickness $3$ centered at $y$. If we take $t=|r_1-r_2|$ or $t=r_1+r_2$, we see that this implies that ``most" of the $F_{y, r}$ with $(y, r)\in (\mathcal{E}_k^{\gamma, j, b}\cap\tilde{\mathcal{E}}_k^{\gamma, j}(u)\cap Q^{\ast})_Y\times\{r_1\}$ do not ``interact badly" (where by badly we mean to the worst possible extent allowed by Lemma \ref{bessel}, i.e. tangencies of annuli) with most of the other $F_{y^{\prime}, r^{\prime}}$ where $(y^{\prime}, r^{\prime})\in(\mathcal{E}_k^{\gamma, j}\cap Q^{\ast})_Y\times\{r_2\}$. This will allow us to obtain (\ref{rand3}), which is a good estimate in the case that $N_{R, Q, b}$ is small.
\newline
\indent
}\fi
We will prove
\begin{multline}\label{Kest}
K_k^{\gamma, j, b, b^{\prime}, \ast}(Q, s)\lesssim
\\
\max[u2^{4m/3}N_{Y, Q, b, b^{\prime}}^{5/3}2^{-2s}, u2^{m/2}N_{Y, Q, b, b^{\prime}}2^{-s}], \qquad s>m+100.
\end{multline}
Combining this with the trivial bound $K_k^{\gamma, j, b, b^{\prime}, \ast}(Q, s)\lesssim N_{Y, Q, b, b^{\prime}}$ yields
\begin{multline}\label{Kest2}
K_k^{\gamma, j, b, b^{\prime}, \ast}(Q, s)\lesssim
\\
\max[\min(u2^{4m/3}N_{Y, Q, b, b^{\prime}}^{5/3}2^{-2s}, N_{Y, Q, b, b^{\prime}}), 
\\
\min(u2^{m/2}N_{Y, Q, b, b^{\prime}}2^{-s}, N_{Y, Q, b, b^{\prime}})],
\\s>m+100.
\end{multline}
Note that (\ref{Kest}) gives decay in the number of points $K_k^{\gamma, j, b, b^{\prime}, \ast}(Q, s)$ (i.e. $K_k^{\gamma, j, b, b^{\prime} \ast}(Q, s)\ll N_{Y, Q, b, b^{\prime}}$) if we have that both
\begin{enumerate}
\item$N_{Y, Q, b, b^{\prime}}^{5/3}2^{-2s}u2^{4m/3}\ll N_{Y, Q, b, b^{\prime}}$, i.e. if $2^s\gg N_{Y, Q, b, b^{\prime}}^{1/3}u^{1/2}2^{2m/3}$, and
\item  $N_{Y, Q, b, b^{\prime}}2^{-s}u2^{m/2}\ll N_{Y, Q, b, b^{\prime}}$, i.e. if $2^s\gg u2^{m/2}$.
\end{enumerate}
Using Lemma \ref{bessel}, we may bound
\begin{multline}\label{b1}
\sum_{\substack{(y, r)\in(\mathcal{E}_k^{\gamma, j, b}\cap\tilde{\mathcal{E}}_k^{\gamma, j}(u)\cap Q^{\ast})\\
(y^{\prime}, r^{\prime})\in(\mathcal{E}_k^{\gamma, j, b^{\prime}}\cap\tilde{\mathcal{E}}_k^{\gamma, j}(u)\cap Q^{\ast})
\\2^{m}\le|(y, r)-(y^{\prime}, r^{\prime})|\le 2^{m+1}}}|\left<F_{y, r}, F_{y^{\prime}, r^{\prime}}\right>|
\\
\lesssim \sum_{\substack{r_1\in (\mathcal{E}_k^{\gamma, j, b}\cap\tilde{\mathcal{E}}_k^{\gamma, j}(u)\cap Q^{\ast})_R
\\
r_2\in (\mathcal{E}_k^{\gamma, j, b^{\prime}}\cap\tilde{\mathcal{E}}_k^{\gamma, j}(u)\cap Q^{\ast})_R}}\bigg(\sum_{\substack{y, y^{\prime}\in(\mathcal{E}_k^{\gamma, j, b}\cap\tilde{\mathcal{E}}_k^{\gamma, j}(u)\cap Q^{\ast})_Y\\2^{m}\le|(y, r_1)-(y^{\prime}, r_2)|\le 2^{m+1}}}|\left<F_{y, r_1}, F_{y^{\prime}, r_2}\right>|\bigg)
\\
\lesssim 2^{3(k-m/2)}\sum_{\substack{r_1\in (\mathcal{E}_k^{\gamma, j, b}\cap\tilde{\mathcal{E}}_k^{\gamma, j}(u)\cap Q^{\ast})_R
\\
r_2\in (\mathcal{E}_k^{\gamma, j, b^{\prime}}\cap\tilde{\mathcal{E}}_k^{\gamma, j}(u)\cap Q^{\ast})_R}}
\bigg(\sum_{0\le a\le m+10}\bigg(\sum_{y\in(\mathcal{E}_k^{\gamma, j, b}\cap\tilde{\mathcal{E}}_k^{\gamma, j}(u)\cap Q^{\ast})_Y}
\\
\sum_{\substack{y^{\prime}\in(\mathcal{E}_k^{\gamma, j, b^{\prime}}\cap\tilde{\mathcal{E}}_k^{\gamma, j}(u)\cap Q^{\ast})_Y:\\min_{\pm, \pm}(1+|r_1\pm r_2\pm |y-y^{\prime}||)\approx 2^a}}
2^{-aN}
\bigg)\bigg)
\\
\lesssim 2^{3(k-m/2)}\sum_{\substack{r_1\in (\mathcal{E}_k^{\gamma, j, b}\cap\tilde{\mathcal{E}}_k^{\gamma, j}(u)\cap Q^{\ast})_R
\\
r_2\in (\mathcal{E}_k^{\gamma, j, b^{\prime}}\cap\tilde{\mathcal{E}}_k^{\gamma, j}(u)\cap Q^{\ast})_R}}
\bigg(\sum_{0\le a\le m+10}2^{-aN}
\\
\times\bigg(\sum_{s\ge 0: 2^s\le 2N_{Y, Q, b}}K_k^{\gamma, j, b, b^{\prime}, \ast}(Q, s)2^s\bigg)\bigg)
\\
\lesssim 2^{3(k-m/2)}N_{R, Q, b}N_{R, Q, b^{\prime}}\sum_{s\ge 0: 2^s\le 2N_{Y, Q, b}}K_k^{\gamma, j, b, b^{\prime}, \ast}(Q, s)2^s.
\end{multline}
Assuming (\ref{Kest2}) holds, we have
\begin{multline}\label{longy1}
\sum_{\substack{(y, r)\in(\mathcal{E}_k^{\gamma, j, b}\cap\tilde{\mathcal{E}}_k^{\gamma, j}(u)\cap Q^{\ast})\\
(y^{\prime}, r^{\prime})\in(\mathcal{E}_k^{\gamma, j, b^{\prime}}\cap\tilde{\mathcal{E}}_k^{\gamma, j}(u)\cap Q^{\ast})
\\2^{m}\le|(y, r)-(y^{\prime}, r^{\prime})|\le 2^{m+1}}}|\left<F_{y, r}, F_{y^{\prime}, r^{\prime}}\right>|
\\
\lesssim N_{R, Q, b}N_{R, Q, b^{\prime}}2^{3(k-m/2)}\sum_{s\ge 0: 2^s\lesssim N_{Y, Q, b, b^{\prime}}}\max[\min(u2^{4m/3}N_{Y, Q, b, b^{\prime}}^{5/3}2^{-s}, 
\\
N_{Y, Q, b, b^{\prime}}2^s), 
\min(u2^{m/2}N_{Y, Q, b, b^{\prime}}, N_{Y, Q, b, b^{\prime}}2^s)]
\\
\lesssim N_{R, Q, b}N_{R, Q, b^{\prime}}2^{3(k-m/2)}\bigg[m2^mN_{Y, Q, b, b^{\prime}}+\max\bigg\{\sum_{s\ge 0: 2^s\lesssim N_{Y, Q, b, b^{\prime}}}
\\
\min(u2^{4m/3}N_{Y, Q, b, b^{\prime}}^{5/3}2^{-s}, N_{Y, Q, b, b^{\prime}}2^s),
\\
\sum_{s\ge 0: 2^s\lesssim N_{Y, Q, b, b^{\prime}}}\min(u2^{m/2}N_{Y, Q, b, b^{\prime}}, N_{Y, Q, b, b^{\prime}}2^s)\bigg\}\bigg].
\end{multline}
Now, note that $u2^{4m/3}N_{Y, Q, b, b^{\prime}}^{5/3}2^{-s}\ge N_{Y, Q, b, b^{\prime}}2^s$ if and only if $2^s\le u^{1/2}2^{2m/3}N_{Y, Q, b, b^{\prime}}^{1/3}$. Thus choosing the better estimate in the term $\min(u2^{4m/3}N_{Y, Q, b, b^{\prime}}^{5/3}2^{-s}, N_{Y, Q, b, b^{\prime}}2^s)$ depending on $s$ yields that 
\begin{align*}
\sum_{s\ge 0: 2^s\lesssim N_{Y, Q, b, b^{\prime}}}\min(u2^mN_{Y, Q, b, b^{\prime}}^{5/3}2^{-s}, N_{Y, Q, b, b^{\prime}}2^s)\lesssim u^{1/2}2^{2m/3}N_{Y, Q, b, b^{\prime}}^{4/3}.
\end{align*}
Note that $u2^{m/2}N_{Y, Q, b, b^{\prime}}\ge N_{Y, Q, b, b^{\prime}}2^s$ if and only if $2^s\le u2^{m/2}$. Thus choosing the better estimate in the term $\min(u2^{m/2}N_{Y, Q, b, b^{\prime}}, N_{Y, Q, b, b^{\prime}}2^s)$ depending on $s$ yields that
\begin{align*}
\sum_{s\ge 0: 2^s\lesssim N_{Y, Q, b, b^{\prime}}}\min(u2^{m/2}N_{Y, Q, b, b^{\prime}}, N_{Y, Q, b, b^{\prime}}2^s)\lesssim \log(N_{Y, Q, b, b^{\prime}})N_{Y, Q, b, b^{\prime}}\,u2^{m/2}.
\end{align*}
Using that $N_{Y, Q, b, b^{\prime}}^{1/3}\lesssim u^{1/3}2^{m/3}$, it follows that the left hand side of (\ref{longy1}) is bounded by
\begin{multline}
N_{R, Q, b}N_{R, Q, b^{\prime}}2^{3(k-m/2)}N_{Y, Q, b, b^{\prime}}\bigg[m2^m+\log(N_{Y, Q, b, b^{\prime}})
\\
\times\max(N_{Y, Q, b, b^{\prime}}^{1/3}u^{1/2}2^{2m/3}, u2^{m/2})\bigg]
\\
\lesssim N_{R, Q, b}N_{R, Q, b^{\prime}}2^{3(k-m/2)}N_{Y, Q, b, b^{\prime}}(m\log(u))\max(u^{5/6}2^{m}, u2^{m/2})
\\
\lesssim \max(N_{R, Q, b}, N_{R, Q, b^{\prime}})2^{\max(c, c^{\prime})}2^{3(k-m/2)}(m\log(u))\max(u^{5/6}2^{m}, u2^{m/2}),
\end{multline}
which proves (\ref{rand3}). This will be a good estimate when $\max(N_{R, Q, b}, N_{R, Q, b^{\prime}})$ is small. 
\newline
\indent
Thus to prove (\ref{rand3}) it remains to prove (\ref{Kest}). We will in fact prove (\ref{Kest}) with $K_k^{\gamma, j, b, b^{\prime}, \ast}(Q, s)$ replaced by $K_k^{\gamma, j, b, b^{\prime}}(Q, s, t)$, uniformly in $t\le 2^{m+10}$. Fix $t\le 2^{m+10}$ and let $\alpha=\lceil{\log_2(t)}\rceil$ and cover $(\mathcal{E}_k^{\gamma, j, b}\cap\tilde{\mathcal{E}}_k^{\gamma, j}(u)\cap Q^{\ast})_Y$ by $\lesssim 2^{4(m-\alpha)}$ many $4$-dimensional almost-disjoint balls of radius $2^{\alpha+5}$; denote this collection of balls as $\mathfrak{B}=\{B_i\}$. For each $i$, we define a collection of ``special" points $A_{k, i}^{\gamma, j, b, b^{\prime}}(Q, s, t)$ to be the set of all points $y\in(\mathcal{E}_k^{\gamma, j, b}\cap\tilde{\mathcal{E}}_k^{\gamma, j}(u)\cap Q^{\ast})_Y\cap B_i$ such that there are $\ge 2^s$ many points $y^{\prime}\in(\mathcal{E}_k^{\gamma, j, b^{\prime}}\cap Q^{\ast})_Y$ such that $y^{\prime}$ lies in the annulus of radius $t$ and thickness $3$ centered at $y$. That is, we define
\begin{align*}
A_{k, i}^{\gamma, j, b, b^{\prime}}(Q, s, t):=\{y\in(\mathcal{E}_k^{\gamma, j, b}\cap\tilde{\mathcal{E}}_k^{\gamma, j}(u)\cap Q^{\ast})_Y\cap B_i: \text{ there exist at least }
\\
2^s\text{ many points }
y^{\prime}\in(\mathcal{E}_k^{\gamma, j, b^{\prime}}\cap Q^{\ast})_Y\text{ such that }||y^{\prime}-y|-(t+1.5)|\le 1.5\}.
\end{align*}
Let $K_{k, i}^{\gamma, j, b, b^{\prime}}(Q, s, t)$ denote the cardinality of $A_{k, i}^{\gamma, j, b, b^{\prime}}(Q, s, t)$. Now cover each $B_i$ with $\lesssim 2^{4(\alpha-l)}$ many almost disjoint $4$-dimensional balls $B_{i, \alpha}$ of radius $2^l$ for some $l\le \alpha$. Each such ball contains at most $u2^l$ many points of $A_{k, i}^{\gamma, j, b, b^{\prime}}(Q, s, t)$, so for a fixed $i$ there must be at least $\gtrsim K_{k, i}^{\gamma, j, b, b^{\prime}}(Q, s, t)(u2^l)^{-1}$ many balls $B_{i, j}$ that contain at least one point in $A_{k, i}^{\gamma, j, b, b^{\prime}}(Q, s, t)$. Thus there must be at least $\gtrsim K_{k, i}^{\gamma, j, b, b^{\prime}}(Q, s, t)(u2^l)^{-1}$ many such points in $B_i\cap A_{k, i}^{\gamma, j, b, b^{\prime}}(Q, s, t)$ spaced apart by $\gtrsim 2^l$; call this set $D_{k, i}^{\gamma, j, b, b^{\prime}}(Q, s, t)$. But by Lemma \ref{geomcor}, which we prove later in Section \ref{geomsec} of the paper, the size of three-fold intersections of four-dimensional annuli of radius $t\approx 2^\alpha$ and thickness $3$ spaced apart by $\gtrsim 2^l$ with centers lying a ball of radius $2^{\alpha-5}$ is bounded above by $\lesssim 2^{3(\alpha-l)}2^\alpha$ provided that $l\ge \alpha/2+20$. 
\newline
\indent
It follows that if $l\ge \alpha/2+20$, then for each of these $\approx K_{k, i}^{\gamma, j, b, b^{\prime}}(Q, s, t)(u2^l)^{-1}$ many points $p\in D_{k, i}^{\gamma, j, b, b^{\prime}}(Q, s, t)$, there can be at most 
\begin{align*}
\lesssim K_{k, i}^{\gamma, j, b, b^{\prime}}(Q, s, t)^2(u2^l)^{-2}2^{3(\alpha-l)}2^{\alpha}
\end{align*}
points lying inside the $t$-annulus centered at $p$ that are simultaneously contained in at least two other different $t$-annuli centered at points in $D_{k, i}^{\gamma, j, b, b^{\prime}}(Q, s, t)$. This implies that if $N_{Y, Q, b^{\prime}, i}$ denotes the cardinality of $(\mathcal{E}_k^{\gamma, j, b^{\prime}}\cap Q^{\ast})_Y\cap B_i^{\ast}$ where $B_i^{\ast}=10B_i$, then we have
\begin{align}\label{c1}
N_{Y, Q, b^{\prime}, i}\gtrsim K_{k, i}^{\gamma, j, b, b^{\prime}}(Q, s, t)(u2^l)^{-1}2^s,
\end{align}
which is essentially $2^s$ times the number of points in $D_{k, i}^{\gamma, j, b, b^{\prime}}(Q, s, t)$, provided that $2^s$ is much bigger than the total number of points lying inside a $t$-annulus centered at $p$ that are simultaneously contained in at least two other different $t$-annuli centered at points in $D_{k, i}^{\gamma, j, b, b^{\prime}}(Q, s, t)$, i.e. provided that
\begin{align}\label{c2}
K_{k, i}^{\gamma, j, b, b^{\prime}}(Q, s, t)^2(u2^l)^{-2}2^{3(\alpha-l)}2^\alpha\ll 2^s
\end{align}
and 
\begin{align*}
l\ge \alpha/2+20.
\end{align*} 
Solving for $2^l$ in (\ref{c2}) yields
\begin{align}\label{c3}
2^l\gg K_{k, i}^{\gamma, j, b, b^{\prime}}(Q, s, t)^{2/5}2^{4\alpha/5}u^{-2/5}2^{-s/5}.
\end{align}
Since $s\gg m$, we may choose a minimal $l$ such that 
\begin{align*}
2^l\gg\max(K_{k, i}^{\gamma, j, b, b^{\prime}}(Q, s, t)^{2/5}2^{4\alpha/5}u^{-2/5}2^{-s/5}, 2^{\alpha/2})
\end{align*}
for a sufficiently large implied constant. Substituting into (\ref{c1}) yields
\begin{align}\label{c4}
K_{k, i}^{\gamma, j, b, b^{\prime}}(Q, s, t)\lesssim\max[u2^{4m/3}N_{Y, Q, b^{\prime}, i}^{5/3}2^{-2s}, u2^{m/2}N_{Y, Q, b^{\prime}, i}2^{-s}],
\end{align}
and summing over all $i$ and using the almost-disjointness of the $B_i^{\ast}$ gives
\begin{align}\label{c6}
K_{k}^{\gamma, j, b, b^{\prime}}(Q, s, t)\lesssim\max[u2^{4m/3}N_{Y, Q, b^{\prime}}^{5/3}2^{-2s}, u2^{m/2}N_{Y, Q, b^{\prime}}2^{-s}].
\end{align}
Taking the maximum over all $0\le t\le 2^{m+10}$ proves (\ref{Kest}) and hence also (\ref{rand3}).
\newline
\indent
It remains to prove (\ref{rand4}), which we reproduce again below for convenience.
\begin{multline}
\sum_{\substack{(y, r)\in\bigcup_{b\in\mathcal{B}_{Q, c, d}}(\mathcal{E}_k^{\gamma, j, b}\cap\tilde{\mathcal{E}}_k^{\gamma, j}(u)\cap Q^{\ast})\\
 (y^{\prime}, r^{\prime})\in\bigcup_{b\in\mathcal{B}_{Q, c^{\prime}, d^{\prime}}}(\mathcal{E}_k^{\gamma, j, b}\cap\tilde{\mathcal{E}}_k^{\gamma, j}(u)\cap Q^{\ast})
\\2^{m}\le|(y, r)-(y^{\prime}, r^{\prime})|\le 2^{m+1}}}|\left<F_{y, r}, F_{y^{\prime}, r^{\prime}}\right>|
\\
\lesssim 2^{3(k-m/2)}2^{\max(c, c^{\prime})}(\max(\#\mathcal{B}_{Q, c, d}, \#\mathcal{B}_{Q, c^{\prime}, d^{\prime}}))^2\\\times u2^m(2^{\max((c-d)/2, (c^{\prime}-d^{\prime})/2)})^{-1}.
\end{multline}

This will be a good estimate in the case that $2^{\max((c-d)/2, (c^{\prime}-d^{\prime})/2)}$ is large. Without loss of generality, assume that $c^{\prime}-d^{\prime}\ge c-d$. For a fixed $(y, r)\in Q^{\ast}$ and a fixed $y^{\prime}\in(\mathcal{E}_k^{\gamma, j, b^{\prime}}\cap\tilde{\mathcal{E}}_k^{\gamma, j}(u)\cap Q^{\ast})_Y$, there are at most two values of $r^{\prime}$ away from which $\left<F_{y, r}, F_{y^{\prime}, r^{\prime}}\right>$ decays rapidly. Thus using Lemma \ref{bessel} we may estimate
\begin{multline}\label{b2}
\sum_{\substack{(y, r)\in\bigcup_{b\in\mathcal{B}_{Q, c, d}}(\mathcal{E}_k^{\gamma, j, b}\cap\tilde{\mathcal{E}}_k^{\gamma, j}(u)\cap Q^{\ast})\\
 (y^{\prime}, r^{\prime})\in\bigcup_{b\in\mathcal{B}_{Q, c^{\prime}, d^{\prime}}}(\mathcal{E}_k^{\gamma, j, b}\cap\tilde{\mathcal{E}}_k^{\gamma, j}(u)\cap Q^{\ast})
\\2^{m}\le|(y, r)-(y^{\prime}, r^{\prime})|\le 2^{m+1}}}|\left<F_{y, r}, F_{y^{\prime}, r^{\prime}}\right>|
\\
\lesssim\sum_{0\le a\le m+10}\bigg(\sum_{(y, r)\in\bigcup_{b\in\mathcal{B}_{Q, c, d}}(\mathcal{E}_k^{\gamma, j, b}\cap\tilde{\mathcal{E}}_k^{\gamma, j}(u)\cap Q^{\ast})}\bigg(\sum_{y^{\prime}\in\bigcup_{b\in\mathcal{B}_{Q, c^{\prime}, d^{\prime}}}(\mathcal{E}_k^{\gamma, j, b}\cap\tilde{\mathcal{E}}_k^{\gamma, j}(u)\cap Q^{\ast})_Y}
\\
\bigg(\sum_{\substack{r^{\prime}\in\bigcup_{b\in\mathcal{B}_{Q, c^{\prime}, d^{\prime}}}(\mathcal{E}_k^{\gamma, j, b}\cap\tilde{\mathcal{E}}_k^{\gamma, j}(u)\cap Q^{\ast})_R\\2^m\le |(y, r)-(y^{\prime}, r^{\prime})|\le 2^{m+1}\\ \min_{\pm, \pm}(1+|r\pm r^{\prime}\pm |y-y^{\prime}||)\approx 2^a}}2^{-Na}2^{3(k-m/2)}\bigg)\bigg)\bigg)
\\
\lesssim 2^{3(k-m/2)}(\max(\#\mathcal{B}_{Q, c, d}, \#\mathcal{B}_{Q, c^{\prime}, d^{\prime}}))^2\max_{b\in\mathcal{B}_{Q, c, d}}(\#(\mathcal{E}_k^{\gamma, j, b}\cap\tilde{\mathcal{E}}_k^{\gamma, j}(u)\cap Q^{\ast}))
\\
\times \max_{b^{\prime}\in\mathcal{B}_{Q, c, d}}(\#(\mathcal{E}_k^{\gamma, j, b}\cap\tilde{\mathcal{E}}_k^{\gamma, j}(u)\cap Q^{\ast})_Y)
\\
\lesssim 2^{3(k-m/2)}(\max(\#\mathcal{B}_{Q, c, d}, \#\mathcal{B}_{Q, c^{\prime}, d^{\prime}}))^22^{\max(c, c^{\prime})}u2^m(2^{(c^{\prime}-d^{\prime})/2})^{-1},
\end{multline}
and the proof of (\ref{rand4}) is complete.
\end{proof}

We will now use Lemma \ref{rand2} to prove Lemma \ref{lem1}.

\begin{proof}[Proof of Lemma \ref{lem1}]
Fix an $a>0$ to be determined later. Similar to \cite{hns}, we split $\tilde{G}_{u, k}^{\gamma, \mathcal{E}^{\gamma, j}}=\sum_{\mu}\tilde{G}_{u, k, \mu}^{\gamma, \mathcal{E}^{\gamma, j}}$, where for each positive integer $\mu$ we set
\begin{align*}
I_{k, \mu}=[2^k+(\mu-1)u^a, 2^k+\mu u^a),
\end{align*}
\begin{align*}
\mathcal{E}_{k, \mu}=\mathcal{Y}\times I_{k, \mu},
\end{align*}
\begin{align*}
\tilde{G}_{u, k, \mu}^{\gamma, \mathcal{E}^{\gamma, j}}=\sum_{(y, r)\in\mathcal{E}_{k, \mu}\cap\mathcal{E}_k^{\gamma, j}\cap\tilde{\mathcal{E}}_k^{\gamma, j}(u)}\gamma(y, r)F_{y, r},
\end{align*}
and
\begin{align*}
\tilde{G}_{u, k, \mu, r}^{\gamma, \mathcal{E}^{\gamma, j}}=\sum_{y:\,(y, r)\in\mathcal{E}_{k, \mu}\cap\mathcal{E}_k^{\gamma, j}\cap\tilde{\mathcal{E}}_k^{\gamma, j}(u)}\gamma(y, r)F_{y, r}.
\end{align*}
We have
\begin{align}\label{split2}
\Norm{\tilde{G}_{u, k}^{\gamma, \mathcal{E}^{\gamma, j}}}_2^2\lesssim\Norm{\sum_{\mu}\tilde{G}_{u, k, \mu}^{\gamma, \mathcal{E}^{\gamma, j}}}_2^2\lesssim\sum_{\mu}\Norm{\tilde{G}_{u, k, \mu}^{\gamma, \mathcal{E}^{\gamma, j}}}_2^2+\sum_{\mu^{\prime}>\mu+10}|\left<\tilde{G}_{u, k, \mu^{\prime}}^{\gamma, \mathcal{E}^{\gamma, j}}, G_{u, k, \mu}^{\gamma, \mathcal{E}^{\gamma, j}}\right>|.
\end{align}
By Cauchy-Schwarz,
\begin{align*}
\Norm{\tilde{G}_{u, k, \mu}^{\gamma, \mathcal{E}^{\gamma, j}}}_2^2\lesssim u^a\sum_{r\in\mathcal{I}_{k, \mu}\cap\mathcal{R}}\Norm{\tilde{G}_{u, k, \mu, r}^{\gamma, \mathcal{E}^{\gamma, j}}}_2^2.
\end{align*}
Write
\begin{align*}
\tilde{G}_{u, k, \mu, r}^{\gamma, \mathcal{E}^{\gamma, j}}=\bigg(\sum_{y:\,(y, r)\in\mathcal{E}_{k, \mu}\cap\mathcal{E}_k^{\gamma, j}\cap\tilde{\mathcal{E}}_k^{\gamma, j}(u)}\gamma(y, r)\psi_0(\cdot-y)\bigg)\ast(\sigma_r\ast\psi_0).
\end{align*}
By the Fourier decay of $\sigma_r$ and the order of vanishing of $\psi_0$ at the origin, we have
\begin{align*}
\Norm{\widehat{\sigma_r}\widehat{\psi}_0}_{\infty}\lesssim r^{3/2}.
\end{align*}
Since the square of the $L^2$ norm of $\sum_{y:\,(y, r)\in\mathcal{E}_{k, \mu}\cap\mathcal{E}_k^{\gamma, j}\cap\tilde{\mathcal{E}}_k^{\gamma, j}(u)}\gamma(y, r)\psi_0(\cdot -y)$ is $\lesssim 2^{2j}\#\{y\in\mathcal{Y}:\,(y, r)\in\mathcal{E}_{k, \mu}\cap\mathcal{E}_k^{\gamma, j}\cap\tilde{\mathcal{E}}^{\gamma, j}(u)\}$, we have
\begin{align}\label{aest}
\sum_{\mu}\Norm{\tilde{G}_{u, k, \mu}^{\gamma, \mathcal{E}^{\gamma, j}}}_2^2\lesssim u^a\sum_{\mu}\sum_{r\in\mathcal{I}_{k, \mu}\cap\mathcal{R}}\Norm{\tilde{G}_{u, k, \mu, r}^{\gamma, \mathcal{E}^{\gamma, j}}}_2^2\lesssim 2^{2j}u^a2^{3k}\#\mathcal{E}_k^{\gamma, j}.
\end{align}
By (\ref{split2}), it remains to estimate $\sum_{\mu^{\prime}>\mu+10}|\left<\tilde{G}_{u, k, \mu^{\prime}}^{\gamma, \mathcal{E}^{\gamma, j}}, \tilde{G}_{u, k, \mu}^{\gamma, \mathcal{E}^{\gamma, j}}\right>|$. 
\newline
\indent
Note that we have the bound
\begin{multline}
\sum_{\mu^{\prime}>\mu+10}|\left<\tilde{G}_{u, k, \mu^{\prime}}^{\gamma, \mathcal{E}^{\gamma, j}}, \tilde{G}_{u, k, \mu}^{\gamma, \mathcal{E}^{\gamma, j}}\right>|
\\
\lesssim 2^{2j}\sum_{\substack{m: 2^m\ge u^a\\(y, r), (y^{\prime}, r^{\prime})\in\tilde{\mathcal{E}}_k^{\gamma, j}(u)\cap\mathcal{E}_k^{\gamma, j}\\2^m\le |(y, r)-(y^{\prime}, r^{\prime})|\le 2^{m+1}}}|\left<F_{y, r}, F_{y^{\prime}, r^{\prime}}\right>|
\\
\lesssim 2^{2j}\sum_{m: 2^m\ge u^a}\bigg(\sum_{Q\in\mathcal{Q}_{u, k, j, m}}\bigg(\sum_{\substack{(y, r), (y^{\prime}, r^{\prime})\in Q\cap\tilde{\mathcal{E}}_k^{\gamma, j}(u)\cap\mathcal{E}_k^{\gamma, j}\\2^m\le |(y, r)-(y^{\prime}, r^{\prime})|\le 2^{m+1}}}|\left<F_{y, r}, F_{y^{\prime}, r^{\prime}}\right>|\bigg)\bigg).
\end{multline}
To estimate the inner sum above, we will use Corollary \ref{randcor3}. Summing over all $Q\in\mathcal{Q}_{u, k, j, m}$ and over all $m$ such that $2^m\ge u^a$ we have
\begin{align}\label{onetwo}
\sum_{\mu^{\prime}>\mu+10}|\left<\tilde{G}_{u, k, \mu^{\prime}}^{\gamma, \mathcal{E}^{\gamma, j}}, \tilde{G}_{u, k, \mu}^{\gamma, \mathcal{E}^{\gamma, j}}\right>|\lesssim_{\epsilon} 2^{2j}( I+II),
\end{align}
where
\begin{align}\label{b3}
I:= 2^{3k}\sum_{l\ge j}2^{(l-j)/10}(\#\mathcal{E}_k^{\gamma, l})u^{\epsilon}\max(u^{11/12-a/2}, u^{13/12-a})
\end{align}
and 
\begin{align}\label{b4}
II:=2^{3k}\sum_{l\ge j}2^{(l-j)/10}(\#\mathcal{E}_k^{\gamma, l})u^{\epsilon}u^{11/12-a/2}.
\end{align}
Combining (\ref{split2}), (\ref{aest}) and (\ref{onetwo}), we thus have the estimate
\begin{align*}
\Norm{\tilde{G}_{u, k}^{\gamma, \mathcal{E}^{\gamma, j}}}_2^2\lesssim_{\epsilon}2^{2j}2^{3k}\sum_{l\ge j}2^{(l-j)/10}(\#\mathcal{E}_k^{\gamma, l})\bigg[u^a
+u^{11/12-a/2+\epsilon}+u^{13/12-a+\epsilon}\bigg].
\end{align*}
Choose $a=11/18$ to obtain
\begin{align*}
\Norm{\tilde{G}_{u, k}^{\gamma, \mathcal{E}^{\gamma, j}}}_2^2\lesssim_{\epsilon}2^{2j}2^{3k}\sum_{l\ge j}2^{(l-j)/10}(\#\mathcal{E}_k^{\gamma, l})u^{11/18+\epsilon}
\end{align*}
for every $\epsilon>0$, which is (\ref{comp}).
\end{proof}

\subsection*{Incomparable radii}
We now want to estimate 
\begin{align*}
\sum_{k>k^{\prime}>N(u)}|\left<\tilde{G}_{u, k^{\prime}}^{\gamma, \mathcal{E}^{\gamma, j}}, \tilde{G}_{u, k}^{\gamma, \mathcal{E}^{\gamma, j}}\right>|.
\end{align*} 
Our estimate will be much better than in the comparable radii case. In fact, since $d=4$, we may simply use the estimate proved for incomparable radii in \cite{hns}, which is more than sufficient for our purposes. We restate this estimate using our notation as follows.
\begin{lemma}\label{inc}
Let $\epsilon>0$. For the choice $N(u)=100\epsilon^{-1}\log_2(2+u)$, we have
\begin{align}\label{i5}
\sum_{k>k^{\prime}>N(u)}|\left<\tilde{G}_{u, k^{\prime}}^{\gamma, \mathcal{E}^{\gamma, j}}, G_{u, k}^{\gamma, \mathcal{E}^{\gamma, j}}\right>|\lesssim_{\epsilon}2^{2j}\sum_k2^{3k}\sum_{l:\,|l-j|\le 10}\#\mathcal{E}_k^{\gamma, l}.
\end{align}
\end{lemma}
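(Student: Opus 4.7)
The plan is to follow the incomparable-radii argument of Heo--Nazarov--Seeger \cite{hns}, which in dimension $d=4$ produces nontrivial decay in $k^\prime$ per scalar product $\left<F_{Y,R},F_{y,r}\right>$ without needing the refined geometric input used in Section~3 for $d=3$. First I would expand the inner product,
\begin{align*}
\left<\tilde G_{u,k^\prime}^{\gamma,\mathcal{E}^{\gamma,j}}, \tilde G_{u,k}^{\gamma,\mathcal{E}^{\gamma,j}}\right>
=\sum_{\substack{(Y,R)\in\tilde{\mathcal{E}}_k^{\gamma,j}(u)\cap\mathcal{E}_k^{\gamma,j}\\(y,r)\in\tilde{\mathcal{E}}_{k^\prime}^{\gamma,j}(u)\cap\mathcal{E}_{k^\prime}^{\gamma,j}}}\gamma(Y,R)\,\overline{\gamma(y,r)}\,\left<F_{Y,R},F_{y,r}\right>,
\end{align*}
and use $|\gamma(Y,R)\gamma(y,r)|\lesssim 2^{2j}$ on $\tilde{\mathcal{E}}^{\gamma,j}$ to replace the weight by the factor $2^{2j}$, reducing the problem to a bound on the unweighted sum of scalar products.

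Next I would fix $(Y,R)$ with $R\sim 2^k$ and estimate the inner sum in $(y,r)\in\tilde{\mathcal{E}}_{k^\prime}^{\gamma,j}(u)$ via Lemma \ref{bessel}. Since $R\gg r\sim 2^{k^\prime}$ and a nontrivial contribution requires $|Y-y|$ close to $R\pm r$, the prefactor $(Rr)^{3/2}(1+|Y-y|+|R-r|)^{-3/2}$ collapses to $r^{3/2}\sim 2^{3k^\prime/2}$, and the rapid-decay factors localize $(y,r)$ near the $4$-dimensional cone
\begin{align*}
\mathcal{C}_{Y,R}:=\big\{(y,r):\ |Y-y|=R-r\ \text{or}\ |Y-y|=r-R\big\}\cap\{r\in[2^{k^\prime},2^{k^\prime+1})\}.
\end{align*}
A dyadic decomposition in $2^a\approx\min_{\pm,\pm}|R\pm r\pm|Y-y||$, combined with the volume estimate $\lesssim 2^a\cdot R^3\cdot 2^{k^\prime}$ for the $2^a$-neighborhood of $\mathcal{C}_{Y,R}$ and the density bound $\#(\tilde{\mathcal{E}}_{k^\prime}^{\gamma,j}(u)\cap B)\lesssim u\cdot 2^{k^\prime}$ for every $5$-dimensional $2^{k^\prime}$-ball $B$, then yields
\begin{align*}
\sum_{(y,r)\in\tilde{\mathcal{E}}_{k^\prime}^{\gamma,j}(u)}|\left<F_{Y,R},F_{y,r}\right>|\lesssim u\cdot R^3\cdot 2^{-k^\prime/2},
\end{align*}
where the decay factor $2^{-k^\prime/2}$ is the key effect of dimension $d=4$; in $d=3$ the analogous gain vanishes, explaining the much more elaborate argument needed earlier in the paper.

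Summing over $(Y,R)\in\mathcal{E}_k^{\gamma,j}$ and reintroducing the factor $2^{2j}$ gives
\begin{align*}
|\left<\tilde G_{u,k^\prime}^{\gamma,\mathcal{E}^{\gamma,j}},\tilde G_{u,k}^{\gamma,\mathcal{E}^{\gamma,j}}\right>|\lesssim 2^{2j}\cdot u\cdot 2^{3k}\cdot 2^{-k^\prime/2}\cdot\#\mathcal{E}_k^{\gamma,j}.
\end{align*}
Summing over $N(u)<k^\prime<k$ is a geometric series in $k^\prime$ contributing $\lesssim 2^{-N(u)/2}$; with $N(u)=100\epsilon^{-1}\log_2(2+u)$ we have $u\cdot 2^{-N(u)/2}\lesssim_\epsilon 1$. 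Finally summing in $k$ and noting $\#\mathcal{E}_k^{\gamma,j}\le\sum_{|l-j|\le 10}\#\mathcal{E}_k^{\gamma,l}$ produces the claimed estimate (\ref{i5}).

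The main obstacle, such as it is, is the volume-and-covering step for the $2^a$-neighborhood of $\mathcal{C}_{Y,R}$: one must split cases according to whether $2^a$ is smaller or larger than $2^{k^\prime}$ (the latter is where the density bound is tight), and use that the cone is essentially flat at scales much smaller than $R$ and has $4$-dimensional area $\sim R^3\cdot 2^{k^\prime}$. Everything else is book-keeping.
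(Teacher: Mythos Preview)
Your argument is correct and is precisely the Heo--Nazarov--Seeger incomparable-radii estimate that the paper invokes by citation: fix $(Y,R)$, localize $(y,r)$ to a $2^{k'}$-neighborhood of the light cone, cover by $\lesssim R^{3}2^{-3k'}$ balls of radius $2^{k'}$, apply the density bound and the pointwise estimate $|\langle F_{Y,R},F_{y,r}\rangle|\lesssim 2^{3k'/2}$, then sum. The dyadic decomposition in $2^a$ you describe is in fact unnecessary here (the support condition already forces $2^a\lesssim 2^{k'}$, so the crude bound on the scalar product suffices), but this is harmless overcomplication rather than an error.
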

For a proof of Lemma \ref{inc}, see \cite{hns}.

\subsection*{Putting it together}
Combining (\ref{goal1}), (\ref{comp}) and (\ref{i5}), we have that for every $\epsilon>0$,
\begin{align}
\Norm{\sum_k\tilde{G}_{u, k}^{\gamma, \mathcal{E}^{\gamma, j}}}_2^2=\lesssim_{\epsilon}u^{11/18+\epsilon}\sum_k2^{3k}2^{2j}\sum_{l\ge j}2^{(l-j)/10}(\#\mathcal{E}_k^{\gamma, l}).
\end{align}

\iffalse{Now we combine this with the $L^1$ bound of Lemma \ref{L1lemma2} to get an $L^p$ bound.
\begin{lemma}\label{lplemma}
Suppose $d=3$. Then for $p\le 2$ and for every $\epsilon>0$,
\begin{align*}
\Norm{G_u}_p\lesssim_{\epsilon}u^{-(1/p-1/(13/12)+\epsilon)}(\sum_k2^{2k}\#\mathcal{E}_k)^{1/p}.
\end{align*}
\end{lemma}
\begin{proof}[Proof of Lemma \ref{lplemma}] 
Interpolating, we have
\begin{multline*}
\Norm{G_u}_p^p\le (\text{meas}(\text{supp}(G_u)))^{1-p/2}\Norm{G_u}_2^p
\\
\le (\sum_k\text{meas}(\text{supp}(G_{u, k})))^{1-p/2}\Norm{G_u}_2^p
\\
\lesssim_{\epsilon} u^{-(1-p/2)}(u^{11p/26+\epsilon})\sum_k2^{2k}\#\mathcal{E}_{k}.
\end{multline*}
\end{proof}
Note that this is summable  in $u$ when $p/2+11p/26<1-\epsilon$, i.e. when $p<13/12-\epsilon$. 
}\fi
This completes the proof of Lemma \ref{L2lemma2} and hence the proof of Proposition \ref{mainprop2}.

\section{Appendix: a geometric lemma}\label{geomsec}
In this section we prove the geometric lemma used in the previous section.

\begin{lemma}\label{geomlemma}
Fix integers $j, l$ with $l\le j$. Let $2^{j-1}\le t\le 2^{j+1}$. Then the size of the intersection of three annuli in $\mathbb{R}^3$ of thickness $4$ and inner radius $t$ such that the distance between the centers of any pair is at least $2^l$ and no greater than $2^j/10$ is $\lesssim 2^{3(j-l)}$, provided that $l\ge j/2+10$.
\end{lemma}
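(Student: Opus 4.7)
The plan is to parametrize points of $A_1$ by polar coordinates around $c_1$ and reduce the three-fold intersection estimate to an estimate on spherical band intersections. First I would write $p = c_1 + r\omega$ for $r \in [t, t+4]$ and $\omega \in S^2$, so that $dp = r^2\,dr\,d\omega \asymp 2^{2j}\,dr\,d\omega$. The remaining conditions $|p-c_i|\in[t,t+4]$ for $i=2,3$ translate, after expanding the squared distances, into the constraint $\omega\cdot v_i \in I_i(r)$ for an interval $I_i(r)$ of length $\lesssim 1/|c_i-c_1|$, where $v_i=(c_i-c_1)/|c_i-c_1|$. Geometrically, $\omega$ is thereby confined to the intersection of two spherical bands of widths $w_i \asymp 1/D_{1i} \le 2^{-l}$ whose central great circles meet at an angle $\alpha$ equal to the angle of the triangle $c_1c_2c_3$ at the vertex $c_1$, where $D_{1i} := |c_i - c_1|$.

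Next I would estimate the area of the band intersection. Near each of the two antipodal meeting points of the great circles the intersection looks like a parallelogram of area $\lesssim w_2 w_3/\sin\alpha$, so, using the identity $D_{12}D_{13}\sin\alpha = 2\,\text{Area}(c_1c_2c_3)$, the total $\omega$-measure is $\lesssim 1/\text{Area}(c_1c_2c_3)$. Multiplying by $\int_t^{t+4} r^2\,dr \asymp 2^{2j}$ then yields
\[
|A_1\cap A_2\cap A_3| \;\lesssim\; \frac{2^{2j}}{\text{Area}(c_1c_2c_3)}.
\]

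The crux is then to establish a lower bound $\text{Area}(c_1c_2c_3) \gtrsim 2^{3l-j}$ whenever the triple intersection is nonempty. Here I would use the classical formula $R = D_{12}D_{13}D_{23}/(4\,\text{Area})$ for the circumradius of the triangle $c_1c_2c_3$: nonemptiness of the three-fold sphere intersection (up to the $O(1)$-sized thickening) forces $R \lesssim t$, and combined with the lower bounds $D_{ij}\ge 2^l$ this gives $\text{Area} \gtrsim 2^{3l}/t = 2^{3l-j}$. The hypothesis $l\ge j/2+10$ ensures that this lower bound is safely larger than the $O(t)$-sized slack coming from the thickness $4$, so the estimate survives. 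Plugging this in yields the claimed $|A_1\cap A_2\cap A_3| \lesssim 2^{2j}\cdot 2^{j-3l} = 2^{3(j-l)}$.

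The hardest part will be controlling the near-tangential regime where $R$ is within $O(1)$ of $t$. In that case the two antipodal intersection points of the great circles merge, the naive band-area estimate $w_2 w_3/\sin\alpha$ saturates, and the Jacobian of the map $p\mapsto(|p-c_1|,|p-c_2|,|p-c_3|)$ degenerates. Here I expect one must replace the linear analysis by a quadratic expansion of the third distance function along the circle $S_1\cap S_2$ and combine the resulting arc-length estimate with the pairwise cross-sectional bound; the hypothesis $l\ge j/2+10$ should then either deliver the same $2^{3(j-l)}$ bound or force the configuration into the empty regime, supplying the dichotomy that is the technical heart of the argument.
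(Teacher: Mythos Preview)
Your approach via spherical bands and the circumradius formula is a genuinely different route from the paper's, and the core of it is sound. In particular your area lower bound $\mathrm{Area}(c_1c_2c_3)\gtrsim 2^{3l-j}$ is correct: nonemptiness of the triple annulus intersection does force $R\le 2(t+4)$, by exactly the mechanism you indicate --- one writes $R\le |p'-c_1| + |p'-O|$, bounds $|p'-O|\lesssim t/(2^l\sin\alpha)$, inserts the law of sines $\sin\alpha = D_{23}/(2R)$, and closes the bootstrap $R\le \rho_1 + O(tR/2^{2l})$; here $l\ge j/2+10$ is precisely what makes the coefficient $O(t/2^{2l})$ less than $1/2$.

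There is, however, one inaccuracy in the band step that is really the same issue as your acknowledged gap. The central circles of your two bands are the \emph{small} circles $\{\omega\cdot v_i = a_i\}$ with $a_i\approx D_{1i}/(2t)\ne 0$, not great circles, and a short computation gives that they meet on $S^2$ at an angle $\beta$ with $\sin\beta \approx |z|\sin\alpha$, where $z=\omega\cdot n$ is the component normal to the plane of the centers; at the intersection point one has $|z|^2=(t^2-R^2)/t^2$. So your parallelogram area is really $w_2w_3/(|z|\sin\alpha)$, which only agrees with $w_2w_3/\sin\alpha$ in the non-tangential regime $R\le (1-c)t$. The near-tangential case you flag is therefore not an afterthought but already the reason your area formula is off; your plan to treat it via a quadratic expansion along $S_1\cap S_2$ is reasonable and should work, but that is where all the work lies, and it is not carried out.

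The paper avoids this analytic difficulty by a more hands-on argument. It works in the plane $P$ containing $\ell_{1,2}$ and $\ell_{1,3}$, uses Wolff's two-dimensional annulus lemma to see that each $A_1\cap A_i$ is the rotation about $\ell_{1,i}$ of a pair of arcs of length $\lesssim 2^{j-l}$, and then splits on the angle $\alpha$ between the two lines. When $\alpha\gtrsim 2^{l-j}$ it bounds the triple intersection by counting pairwise intersections of the $\lesssim 2^{j-l}$ thickened circles in each family; when $\alpha<2^{l-j-3}$ it tracks the $\ell_{1,2}$-coordinates of the arc endpoints and shows $A_1\cap A_2$ and $A_1\cap A_3$ live in disjoint slabs, so the triple intersection is empty. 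The paper's dichotomy is thus on $\alpha$ rather than on $|t-R|$; your circumradius approach gives a cleaner one-line bound in the generic case, at the cost of pushing all the difficulty into the degenerate one.
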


We will use the following basic lemma which gives an estimate on the size of intersections of two-dimensional annuli. This is an immediate corollary of Lemma $3.1$ in \cite{wolff}.
\begin{customlemma}{D}\label{wolfflemma}
Let $A_1$ and $A_2$ be two annuli in $\mathbb{R}^2$ of thickness $1$ built upon circles $C_1$ and $C_2$ of radius $R$, and let $d$ denote the distance between the centers of $C_1$ and $C_2$. If $d\le R/5$, then $A_1\cap A_2$ is contained in the $10$-neighborhood of an arc of $C_1$ of length $\lesssim R/d$.
\end{customlemma}

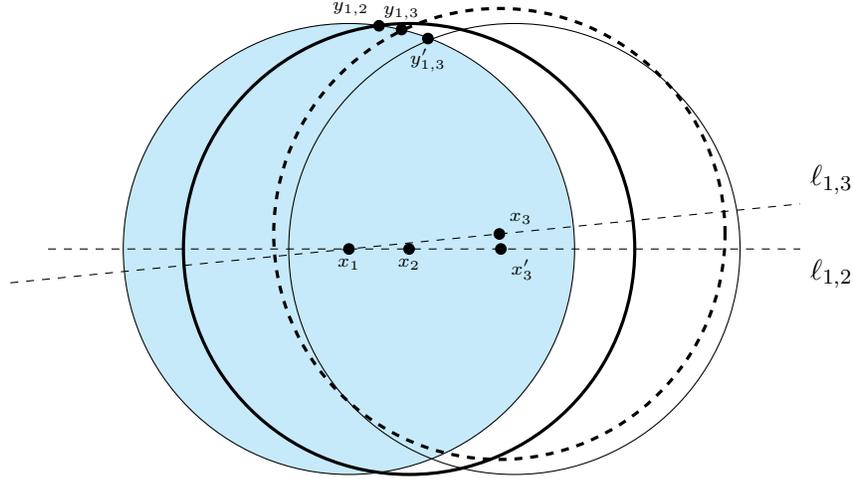
\begin{figure}
\begin{tikzpicture}
\draw[fill=cyan!20!white] (-2,2) circle (3cm);

 \draw[very thick] (-1.2, 2) circle (3cm);

 \filldraw (-2, 2) circle (2pt)  node[anchor=north] {\tiny$x_1$};
  \filldraw (-1.2, 2) circle (2pt) node[anchor=north ] {\tiny$x_2$};
    \filldraw (0, 2.2) circle (2pt) node[anchor=south west] {\tiny$x_3$};

        \filldraw (.02, 2) circle (2pt) node[anchor=north west] {\tiny$x_3^{\prime}$};
 \draw[very thick, dashed] (0, 2.2) circle (3cm);
  \draw(0.2, 2.0) circle (3cm);
 \draw[dashed] (-6,2) -- (4,2) node[anchor=north west]{$\ell_{1, 2}$}; 
 \draw[dashed] (-6.5, 1.55) -- (4, 2.6) node[anchor=south west]{$\ell_{1, 3}$};

 \filldraw (-1.6, 4.97) circle (2pt)  node[anchor=south east] {\tiny$y_{1, 2}$};
  \filldraw (-1.3, 4.92) circle (2pt)  node[anchor=south ] {\tiny$y_{1, 3}$};
    \filldraw (-.95, 4.8) circle (2pt)  node[anchor=north ] {\tiny$y_{1, 3}^{\prime}$};

\iffalse{
\draw[thick, ->] (0, 0)--(9, 0) node[anchor=south west] {$\frac{1}{p}$};
\draw[thick, ->] (0, 0)--(0, 6) node[anchor=south east] {$\lambda(p)$};

 \draw(4 cm, 2pt) -- (4 cm, -2pt) node[anchor=north] {$\frac{1}{2}$};
 \draw(2 cm, 2pt) -- (2 cm, -2pt) node[anchor=north] {$\frac{1}{4}$};
 \draw(6 cm, 2pt) -- (6 cm, -2pt) node[anchor=north] {$\frac{3}{4}$};
 \draw(8 cm, 2pt) -- (8 cm, -2pt) node[anchor=north west] {$1$};
 \draw(1.2 cm, 2pt) -- (1.2 cm, -2pt) node[anchor=north] {\tiny $\frac{1}{2\kappa_{\Omega}+1}$};
  \draw(6.8 cm, 2pt) -- (6.8 cm, -2pt) node[anchor=north] {\tiny $\frac{2\kappa_{\Omega}}{2\kappa_{\Omega}+1}$};
 \filldraw (8, 3) circle (2pt);
 \filldraw (0, 3) circle (2pt);

 \filldraw (6, 0) circle (2pt);
  \filldraw (1.2, 0) circle (2pt);
  \filldraw (6.8, 0) circle (2pt);
 \draw (8, 3)--(6, 0);
 \draw (0, 3)--(2, 0); 
 \draw[very thick] (0, 3)--(1.2, 0);
 \draw[very thick] (6.8, 0)--(8, 3);
 \filldraw(0, 1.5) circle (2pt);

\draw (7.7 cm, 2pt)--(7.7 cm, -2pt) node[anchor=north] {\tiny$\frac{2-\kappa_{\Omega}}{2}$};
\draw (.3 cm, 2pt)--(.3 cm, -2pt) node[anchor=north] {\tiny$\frac{\kappa_{\Omega}}{2}$};
\filldraw (.3, 0) circle (2pt);
\filldraw (7.7, 0) circle (2pt);
\filldraw (8, 1.5) circle (2pt);
  
\draw(2pt, 3cm) -- (-2pt, 3cm) node[anchor=east] {$\kappa_{\Omega}$};
\draw(2pt, 1.5 cm) -- (-2pt, 1.5 cm) node[anchor=east] {$\frac{\kappa_{\Omega}}{2}$};

\draw[thick, dashed] (8, 1.5)--(7.7, 0);
\draw[thick, dashed] (0, 1.5)--(.3, 0);
\draw (8, 0)--(8, 6);
}\fi
  
\end{tikzpicture}
\caption{The circles $C_1, C_2$, $C_3$, and $C_3^{\prime}$ in the plane $P$, from the proof of Lemma \ref{geomlemma}. The shaded-in circle is $C_1$, the thick circle is $C_2$, the dashed circle is $C_3$, and the remaining circle is $C_3^{\prime}$.}
\label{fig1}
\end{figure}

\begin{proof}[Proof of Lemma \ref{geomlemma}]
Let $A_1, A_2, A_3$ denote the three annuli. Let $\ell_{1, 2}$ denote the line through the centers of $A_1$ and $A_2$, and let $\ell_{1, 3}$ denote the line through the centers of $A_1$ and $A_3$. Let $P$ be any plane containing both $\ell_{1, 2}$ and $\ell_{1, 3}$. Then $A_1\cap A_2$ is the three dimensional solid formed by rotating the intersection of the two (circular) annuli $A_1\cap P$ and $A_2\cap P$ about the line $\ell_{1, 2}$. Similarly, $A_1\cap A_3$ is the three dimensional solid formed by rotating the intersection of the two (circular) annuli $A_1\cap P$ and $A_3\cap P$ about the line $\ell_{1, 3}$. 
\newline
\indent
Now, by Lemma \ref{wolfflemma}, $A_1\cap A_2\cap P$ is contained in the $10$-neighborhood of two arcs of length $\lesssim 2^{j-l}$ of the circle that $A_1\cap P$ is built upon. Rotating $A_1\cap A_2\cap P$ about the line $\ell_{1, 2}$ to get $A_1\cap A_2$, this implies that $A_1\cap A_2$ is the union of $\lesssim 2^{j-l}$ many $10$-neighborhoods of circular annuli of radius $\lesssim 2^j$ lying in a plane normal to the line $\ell_{1, 2}$. The same holds for $A_1\cap A_3$ with $\ell_{1, 2}$ replaced by $\ell_{1, 3}$. Suppose first that the angle between $\ell_{1, 2}$ and $\ell_{1, 3}$ is $\ge 2^{l-j-3}$, in radians. Then $|A_1\cap A_2\cap A_3|$ is bounded by $\lesssim 2^{2(j-l)}$ times the largest possible size of the intersection of two $10$-neighborhoods of circular annuli, where the first lies in a plane normal to $\ell_{1, 2}$ and the second lies in a plane normal to $\ell_{1, 3}$. One computes that the largest possible size of such an intersection is $\lesssim 2^{j-l}$.
\newline
\indent
It remains to consider the case when the angle between $\ell_{1, 2}$ and $\ell_{1, 3}$ is $< 2^{l-j-3}$, in radians. We now define the following coordinates associated to the lines $\ell_{1, 2}$ and $\ell_{1, 3}$. Let $x_1, x_2, x_3$ denote the centers of $A_1, A_2, A_3$ respectively. For $x\in\mathbb{R}^3$, we define the $\ell_{1, 2}$-coordinate 
\begin{align*}
(x)_{1, 2}=\frac{\left<x-x_1, x_2-x_1\right>}{|x_2-x_1|}.
\end{align*}
Similarly define the $\ell_{1, 3}$-coordinate
\begin{align*}
(x)_{1, 3}=\frac{\left<x-x_1, x_3-x_1\right>}{|x_3-x_1|}.
\end{align*}
By interchanging the order of $A_1, A_2, A_3$, we may assume without loss of generality that $(x_3)_{1, 2}\ge (x_2)_{1, 2}=1$. We will show that $l\ge j/2+10$ implies that $A_1\cap A_2$ and $A_2\cap A_3$ are actually disjoint. Observe that since the angle between $\ell_{1, 2}$ and $\ell_{1, 3}$ is $< 2^{l-j-3}$, we have that $(x_3-x_2)_{1, 2}\ge 2^{l-1}$. Now, let $x_3^{\prime}$ be the closest point on the line $\ell_{1, 2}$ whose distance from $x_1$ is the same as the distance from $x_1$ to $x_3$. Clearly, we also have $(x_3^{\prime}-x_2)_{1, 2}\ge 2^{l-1}$. Let $C_3$ be the circle in $P$ with center at $x_3$ and radius $t$ and let $C_3^{\prime}$ be the circle in $P$ with center at $x_3^{\prime}$ and radius $t$. Then if $y_{1, 3}^{\prime}$ denotes either of the two points in $C_1\cap C_3^{\prime}$ and $y_{1, 2}$ either of the two points in $C_1\cap C_2$, then $(x_3^{\prime}-x_2)_{1, 2}\ge 2^{l-1}$ implies that $(y_{1, 3}^{\prime}-y_{1, 2})_{1, 2}\ge 2^{l-2}$. This is because with respect to the $\ell_{1, 2}$-coordinate, $y_{1, 3}^{\prime}$ lies at the midpoint of $x_1$ and $x_3$ and $y_{1, 2}$ lies at the midpoint of $x_1$ and $x_2$. Note that $C_1\cap C_3$ is the rotation within $P$ of $C_1\cap C_3^{\prime}$ by an angle of $<2^{l-j-3}$ where the rotation is based at $x_1$. This implies that if $y_{1, 3}$ is either of the two points in $C_1\cap C_3$, then $|y_{1, 3}^{\prime}-y_{1, 3}|\le 2^{l-3}$. It follows that $(y_{1, 3}-y_{1, 2})_{1, 2}\ge (y_{1, 3}^{\prime}-y_{1, 2})_{1,2}-|y_{1, 3}^{\prime}-y_{1, 3}|\ge 2^{l-2}-2^{l-3}=2^{l-3}$. 
\newline
\indent
But by Lemma \ref{wolfflemma}, $A_1\cap A_2$ is the rotation in $\mathbb{R}^3$ of a $10$-neighborhood of an arc of $C_1$ of length $\lesssim 2^{j-l}$ that contains $y_{1, 2}$ about $\ell_{1, 2}$, and so $A_1\cap A_2$ lives in the slab $\{z\in\mathbb{R}^3: |(z-y_{1, 2})_{1, 2}|\le 2^{j-l+4}\}$. Similarly, $A_1\cap A_3$ is the rotation in $\mathbb{R}^3$ of a $10$-neighborhood of an arc of $C_1$ of length $\lesssim 2^{j-l}$ that contains $y_{1, 3}$ about $\ell_{1, 3}$, and so $A_1\cap A_3$ lives in the half-infinite slab $\{z\in\mathbb{R}^3: (z-y_{1, 3})_{1, 2}\ge -2^{j-l+4}\}$, and since $l\ge j/2+10$ we have $j-l+4\le l-10$. Since $(y_{1, 3}-y_{1, 2})_{1, 2}\ge 2^{l-3}$, it follows that $A_1\cap A_2$ and $A_2\cap A_3$ are disjoint.

\end{proof}

\begin{corollary}\label{geomcor}
Fix integers $j, l$ with $l\le j$. Let $2^{j-1}\le t\le 2^{j+1}$. Then the size of the intersection of three annuli in $\mathbb{R}^4$ of thickness $4$ and inner radius $t$ such that the distance between the centers of any pair is at least $2^l$ and no greater than $2^j/10$ is $\lesssim 2^{3(j-l)}2^j$, provided that $l\ge j/2+10$.
\end{corollary}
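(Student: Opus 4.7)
The plan is to reduce the four-dimensional estimate to Lemma \ref{geomlemma} by exploiting the fact that the three centers $x_1,x_2,x_3$ automatically lie on a common affine $2$-plane $P \subset \mathbb{R}^4$, and that each annulus $A_i$ is rotationally symmetric about $x_i$. In particular, each $A_i$ is invariant under the $SO(2)$-action on $\mathbb{R}^4$ by rotations fixing $P$ pointwise and rotating the $2$-dimensional orthogonal complement $P^\perp$; this symmetry will be the sole engine of the proof.

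First I would introduce polar coordinates $(p,r,\theta)$ on $\mathbb{R}^4 \setminus P$ with $p \in P$, $r = |v| > 0$, and $\theta \in [0,2\pi)$ the angular coordinate of $v \in P^\perp$, so that $dV = dp\,r\,dr\,d\theta$. Since $|x - x_i|^2 = |p - x_i|^2 + r^2$ depends only on $p$ and $r$ (using $x_i \in P$), the indicator of $A_1 \cap A_2 \cap A_3$ is independent of $\theta$, so
\begin{align*}
|A_1 \cap A_2 \cap A_3| \;=\; 2\pi \int_0^\infty r\,|S(r)|\,dr,
\end{align*}
where $S(r) := \{p \in P : |p-x_i|^2 + r^2 \in [t^2,(t+4)^2] \text{ for } i=1,2,3\}$ and $|S(r)|$ denotes its $2$-dimensional area. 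Next, fix any unit vector $e \in P^\perp$ and form the $3$-dimensional affine hyperplane $H = P \oplus \mathbb{R}e$; then $A_i \cap H = \{p + se : |p - x_i|^2 + s^2 \in [t^2,(t+4)^2]\}$ is precisely a $3$-dimensional annulus in $H$ of inner radius $t$, thickness $4$, centered at $x_i$. The hypotheses $2^l \le |x_i - x_j| \le 2^j/10$ and $l \ge j/2 + 10$ transfer verbatim to the slice, so Lemma \ref{geomlemma} applies and yields
\begin{align*}
2\int_0^\infty |S(r)|\,dr \;=\; |A_1 \cap A_2 \cap A_3 \cap H|_{3} \;\lesssim\; 2^{3(j-l)}.
\end{align*}

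To conclude, observe that $|S(r)|$ vanishes for $r > t+4$, so in the outer integral the Jacobian factor satisfies $r \le t+4 \lesssim 2^j$, giving
\begin{align*}
|A_1 \cap A_2 \cap A_3| \;=\; 2\pi \int_0^{t+4} r\,|S(r)|\,dr \;\lesssim\; 2^j \int_0^\infty |S(r)|\,dr \;\lesssim\; 2^j \cdot 2^{3(j-l)},
\end{align*}
which is the claimed bound. There is no real obstacle here: the entire argument is the clean $SO(2)$-symmetry around $P$, which converts the $3$-dimensional triple intersection bound into the $4$-dimensional one at the cost of one extra factor $\sim 2^j$ (the length of the support of $|S|$). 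The one point worth double-checking during write-up is simply that the $\theta$-independence of the annuli really does eliminate all angular content in the integrand, so that every slice $H_\theta$ produces the same quantity $\int |S(r)|\,dr$; this is immediate from $|x-x_i|^2 = |p-x_i|^2 + r^2$.
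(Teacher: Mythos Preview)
Your proof is correct. Both you and the paper reduce to Lemma \ref{geomlemma} by a Fubini/slicing argument, but the decompositions are organized differently. The paper chooses a $3$-dimensional hyperplane $P\subset\mathbb{R}^4$ through the three centers and slices by the parallel translates $P_s$; in each slice the three sets are again three-dimensional annuli with the \emph{same} centers $x_1,x_2,x_3$, and one invokes Lemma \ref{geomlemma} slice by slice, then integrates over $s$ (a range of length $\lesssim 2^j$). Your route instead fixes the $2$-plane $P$ spanned by the centers, exploits the $SO(2)$ symmetry in $P^\perp$, and applies Lemma \ref{geomlemma} exactly once to the single $3$-dimensional slice $H=P\oplus\mathbb{R}e$; the extra factor $2^j$ then appears transparently as the bound on the Jacobian weight $r\le t+4$. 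The advantage of your version is that Lemma \ref{geomlemma} is invoked with precisely its stated hypotheses (inner radius $t$, thickness $4$, the original center separations), whereas the paper's slicing produces three-dimensional annuli whose radii $\sqrt{t^2-s^2}$ and thicknesses vary with $s$ and degenerate as $|s|\to t$, so that strictly speaking one must argue separately that those degenerate slices are harmless. Your argument sidesteps that bookkeeping entirely.
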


\begin{proof}
Let $P$ be a hyperplane in $\mathbb{R}^4$ containing the centers of the three annuli, and for each $t\in\mathbb{R}$, let $P_t$ be the one-parameter family of hyperplanes with normals parallel to the normal to $P$. For each $t$, the intersection of each annulus with $P_t$ is a three-dimensional annulus of a fixed radius depending on $t$ that is $\lesssim 2^j$ and a fixed width depending on $t$ that is $\lesssim 1$, and with centers spaced apart by $\gtrsim 2^l$. By Lemma \ref{geomlemma}, $A_1\cap A_2\cap A_3\cap P_t$ has size $\lesssim 2^{3(j-l)}$. It follows that $A_1\cap A_2\cap A_3$ has size $\lesssim 2^{3(j-l)}2^j$.
\end{proof}

\end{document}